\begin{document}

\newtheorem{innercustomthm}{Theorem}
\newenvironment{customthm}[1]
  {\renewcommand\theinnercustomthm{#1}\innercustomthm}
  {\endinnercustomthm}
  
\newtheorem{theorem}{Theorem}[section]
\newtheorem{proposition}[theorem]{Proposition}
\newtheorem{lemma}[theorem]{Lemma}
\newtheorem{corollary}[theorem]{Corollary}
\newtheorem{fact}[theorem]{Fact}

\theoremstyle{definition}
\newtheorem{definition}[theorem]{Definition}
\newtheorem{conjecture}[theorem]{Conjecture}
\newtheorem{notation}[theorem]{Notation}

\theoremstyle{remark}
\newtheorem{remark}[theorem]{Remark}
\newtheorem{example}[theorem]{Example}
\newtheorem{question}[theorem]{Question}

\numberwithin{equation}{section}

\def\id{\operatorname{id}}
\def\alg{\operatorname{alg}}
\def\Frac{\operatorname{Frac}}
\def\Const{\operatorname{Const}}
\def\spec{\operatorname{Spec}}
\def\span{\operatorname{span}}
\def\exc{\operatorname{Exc}}
\def\Div{\operatorname{Div}}
\def\cl{\operatorname{cl}}
\def\mer{\operatorname{mer}}
\def\trdeg{\operatorname{trdeg}}
\def\ord{\operatorname{ord}}
\def\rank{\operatorname{rank}}
\def\loc{\operatorname{loc}}
\def\kloc{\operatorname{K-loc}}
\def\acl{\operatorname{acl}}
\def\tp{\operatorname{tp}}
\def\CC{\mathbb C}
\def\PP{\mathbb P}

\title[Finiteness theorems on differential-algebraic hypersurfaces]{Finiteness theorems on hypersurfaces in partial differential-algebraic geometry}

\author{James Freitag}
\address{Department of Mathematics, UCLA, California, USA}
\email{freitag@math.ucla.edu}

\author{Rahim Moosa}
\address{Department of Pure Mathematics, University of Waterloo, Ontario, Canada}
\email{rmoosa@uwaterloo.ca}

\thanks{James Freitag was supported by an NSF Mathematical Sciences Postdoctoral Research Fellowship.}
\thanks{Rahim Moosa was supported by an NSERC Discovery Grant.}

\date{\today}

\begin{abstract}
Hrushovski's generalization and application of  [Jouanolou, ``Hypersurfaces solutions d'une \'equation de Pfaff analytique", {\em Mathematische Annalen}, 232 (3):239--245, 1978] is here refined and extended to the partial differential setting with possibly nonconstant coefficient fields.
In particular, it is shown that if $X$ is a differential-algebraic variety over a partial differential field~$F$ that is finitely generated over its constant field $F_0$, then there exists a dominant differential-rational map from $X$ to the constant points of an algebraic variety~$V$ over $F_0$, such that all but finitely many codimension one subvarieties of $X$ over~$F$ arise as pull-backs of algebraic subvarieties of~$V$ over $F_0$.
As an application, it is shown that the algebraic solutions to a first order algebraic differential equation over $\CC(t)$ are of bounded height, answering a question of Eremenko.
Two expected model-theoretic applications to $\operatorname{DCF}_{0,m}$ are also given: 1)~Lascar rank and Morley rank agree in dimension two, and 2) dimension one strongly minimal sets orthogonal to the constants are $\aleph_0$-categorical.
A detailed exposition of Hrushovski's original (unpublished) theorem is included, influenced by [Ghys, ``\`A propos d'un th\'eor\`eme de J.-P. Jouanolou concernant les feuilles ferm\'ees des feuilletages holomorphes", {\em Rend. Circ. Mat. Palermo (2)}, 49(1):175--180, 2000].
\end{abstract}

\maketitle

\setcounter{tocdepth}{1}
\tableofcontents

\section{Introduction}

\noindent
In a highly influential but unpublished manuscript from the mid nineteen nineties, Hrushovski showed that in the theory of differentially closed fields of characteristic zero an order one strongly minimal set  that is orthogonal to the constants must be $\aleph_0$-categorical.
His argument went via a certain finiteness theorem in differential-algebraic geometry:

\begin{theorem}[Hrushovski~\cite{hrushovski-jouanolou}]
\label{udi-ghj}
Suppose $X$ is a $\delta$-variety over $\mathbb C$ such that the constants of the $\delta$-function field  $\big(\mathbb C\langle X\rangle,\delta\big)$ is $\mathbb C$.
Then  $X$ has only finitely many codimension\footnote{Hrushovski uses the term {\em co-order}.} one $\delta$-subvarieties over $\mathbb C$.\footnote{When $X\subseteq\mathbb A^2$ is defined by $\delta(x)=P(x,y)$ and $\delta(y)=Q(x,y)$ where $P$ and $Q$ are polynomials over $\CC$, one recovers an old theorem of Darboux; see Singer's discussion and elementary proof of Darboux's Theorem in the appendix of~\cite{singer1992liouvillian}.}
\end{theorem}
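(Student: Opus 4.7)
My plan is to follow the Jouanolou/Ghys strategy via logarithmic derivatives, the goal being to reduce the finiteness of $\delta$-hypersurfaces to the appearance of spurious $\delta$-constants. Fix a projective compactification $\overline{X}$ of $X$ on which $\delta$ extends meromorphically, and let $D$ be a divisor on $\overline{X}$ supporting $\overline{X}\setminus X$ together with the polar locus of $\delta$. For each irreducible codimension one $\delta$-subvariety $Y\subset X$, choose a rational function $f_Y\in\mathbb{C}(\overline{X})$ whose zero scheme restricted to $X$ is $Y$, with any other components absorbed into $D$. Because $Y$ is $\delta$-invariant, $\delta$ preserves the ideal sheaf of $Y$, so $\delta(f_Y)$ vanishes on $Y$ to at least the order of $f_Y$; hence the logarithmic derivative $\ell_Y := \delta(f_Y)/f_Y \in\mathbb{C}(\overline{X})$ is regular at the generic point of $Y$. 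Careful bookkeeping of pole orders shows more: all poles of $\ell_Y$ lie in $D$, and their multiplicities are uniformly bounded. Thus $Y\mapsto \ell_Y$ takes values in a single $\mathbb{C}$-subspace $V\subset \mathbb{C}(\overline{X})$ of rational functions with poles confined to $kD$ for some $k$ depending only on $\delta$ and $\overline{X}$.

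The second step is the geometric heart: $V$ is finite-dimensional over $\mathbb{C}$, by Riemann--Roch applied to $\mathcal{O}_{\overline{X}}(kD)$. Assuming for contradiction that there are infinitely many $\delta$-invariant codimension one subvarieties $Y_i$, pick enough of them so that the images $[\ell_{Y_i}]$ in the finite-dimensional quotient $V/(V\cap d\!\log F_D^\times)$ --- where $F_D^\times$ is the finitely generated group of rational functions on $\overline{X}$ with divisor supported in $D$ --- admit a nontrivial $\mathbb{Q}$-linear relation. After clearing denominators one obtains integers $n_1,\dots,n_m$, not all zero, and a rational function $g\in\mathbb{C}(\overline{X})^\times$ satisfying
\begin{equation*}
\sum_{i=1}^m n_i\,\frac{\delta(f_{Y_i})}{f_{Y_i}} \;=\; \frac{\delta(g)}{g}.
\end{equation*}

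Setting $h := g^{-1}\prod_i f_{Y_i}^{n_i}$, the displayed identity is precisely $\delta(h)/h=0$, so $\delta(h)=0$. Since $h\in\mathbb{C}(X)\subset\mathbb{C}\langle X\rangle$, the hypothesis on constants forces $h\in\mathbb{C}^\times$, and then the divisor identity $\sum n_i[Y_i] = \Div(g)$ in $\Div(\overline{X})$ yields a nontrivial $\mathbb{Z}$-relation among the distinct prime divisors $Y_i$ modulo components in $D$, contradicting the linear independence of distinct primes in $\Div(\overline{X})$ once $Y_1,\dots,Y_m$ are chosen sufficiently many and generic (i.e.\ supported off $D$).

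The main obstacle, in my view, is carrying out the second step cleanly: producing a compactification $\overline{X}$ and divisor $D$ on which $\ell_Y$ has poles of uniformly bounded order along $D$ independent of $Y$. This requires arranging that $\delta$ has at worst logarithmic poles along $D$, which in turn may demand blowing up $\overline{X}$ along the singular locus of the foliation cut out by $\delta$ before Riemann--Roch applies. The remaining subtlety --- passing from a $\mathbb{C}$-linear dependence inside $V$ to a $\mathbb{Z}$-linear one modulo $d\!\log F_D^\times$ --- is manageable because $F_D^\times/\mathbb{C}^\times$ is a finitely generated abelian group, so that elementary linear algebra plus rescaling suffices.
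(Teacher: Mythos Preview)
Your strategy via the logarithmic derivatives $\ell_Y = \delta(f_Y)/f_Y$ is in the right spirit, but the step where you pass from $\mathbb{C}$-linear dependence to a $\mathbb{Z}$-linear one is a genuine gap. The space $V$ is finite-dimensional over $\mathbb{C}$, so infinitely many $\ell_{Y_i}$ certainly satisfy some relation $\sum c_i \ell_{Y_i} = 0$ with $c_i\in\mathbb{C}$; but to form the rational function $h = g^{-1}\prod f_{Y_i}^{n_i}$ you need \emph{integer} exponents, and a finite-dimensional $\mathbb{C}$-space can contain a $\mathbb{Z}$-independent set of arbitrary size (think of $1,\pi,\pi^2,\dots$ inside $\mathbb{C}$). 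Quotienting by $d\log F_D^\times$ does not help: this is a finitely generated subgroup of $V$, not a $\mathbb{C}$-subspace, and the quotient abelian group $V/d\log F_D^\times$ is not of finite $\mathbb{Q}$-rank, so the claim that ``elementary linear algebra plus rescaling suffices'' fails. The paper sidesteps this issue entirely by working with the closed logarithmic $1$-forms $df_Y/f_Y$ rather than the scalar cofactors $\delta(f_Y)/f_Y$, and with $\Div(X)\otimes\mathbb{C}$ rather than trying to manufacture an explicit rational $\delta$-constant. The key map $\xi$ into closed meromorphic $1$-forms modulo holomorphic ones is \emph{injective} (Ghys' observation: the residues along the $Y_\alpha$ recover the coefficients $\lambda_\alpha$), and finiteness then comes from an exterior-algebra argument (Lemmas~\ref{udi-lemma1.5} and~\ref{udi-lemma1.4}) together with the no-first-integral hypothesis. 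At no point are integer coefficients required. What you identify as ``the main obstacle'' --- uniformly bounding the poles of $\ell_Y$ along $D$ --- is by comparison routine.

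There is also a structural gap. You treat $X$ as an algebraic variety admitting a projective compactification to which Riemann--Roch applies, but a $\delta$-variety in the sense of the theorem need not be finite-dimensional: $\mathbb{C}\langle X\rangle$ may have infinite transcendence degree over $\mathbb{C}$, and then there is no such $\overline{X}$. Hrushovski's argument, and the paper's generalisation (Proposition~\ref{boundcodim1} and the proof of Theorem~\ref{ghj-dav}), first establishes that every codimension-one $\delta$-subvariety of $X$ is already detected at a fixed prolongation level $\ell$, replacing $X$ by the honest algebraic $D$-variety $(V,S)$ with $V=\loc(\nabla_\ell c/\mathbb{C})$; only then is the Jouanolou-type argument invoked. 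Your sketch silently assumes this reduction has already been carried out.
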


\noindent
This theorem came up in the seemingly unrelated work of the second author and his collaborators on the Dixmier-Moeglin problem for Poisson algebras.
The following is the key step in the proof of one of the main results of that paper:

\begin{theorem}[Bell et al.~\cite{pdme}]
\label{bell-ghj}
If $R$ is a finitely generated $\mathbb C$-algebra equipped with (possibly noncommuting) $\mathbb C$-linear derivations $\Delta=\{\delta_1,\dots,\delta_m\}$, and having infinitely many height one prime $\Delta$-ideals, then there exists $f\in \Frac(R)\setminus\mathbb C$ with $\delta_i(f)=0$ for all $i=1,\dots,m$.
\end{theorem}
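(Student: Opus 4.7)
The plan is to argue by contrapositive: assuming $\Frac(R)^\Delta = \CC$, show that $R$ has only finitely many height one prime $\Delta$-ideals. Setting $X := \spec R$, the derivations endow $X$ with the structure of an affine $\Delta$-variety over $\CC$; the height one prime $\Delta$-ideals of $R$ correspond bijectively to the codimension one $\Delta$-subvarieties of $X$ over $\CC$, and the hypothesis says exactly that the field of rational $\Delta$-constants on $X$ is $\CC$. So the statement reduces to a finiteness result for codimension one $\Delta$-subvarieties, which is the shape of Hrushovski's Theorem \ref{udi-ghj}.

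A preliminary reduction is needed to handle the possible noncommutativity of $\Delta$, since the standard partial differential framework (in which one expects the paper's main theorem to be cast) uses commuting derivations. For this I would pass to the $\CC$-Lie algebra $\mathfrak g \subseteq \mathrm{Der}_\CC(R)$ generated by $\Delta$: every prime $\Delta$-ideal is $\mathfrak g$-stable, and $\Frac(R)^\Delta = \Frac(R)^{\mathfrak g}$, so nothing is lost by replacing $\Delta$ by $\mathfrak g$. A Frobenius-style argument at the generic point of $X$ should then yield a commuting $\Frac(R)$-basis for the $\Frac(R)$-span of $\mathfrak g$ inside $\mathrm{Der}_\CC\bigl(\Frac(R)\bigr)$, putting the setup, after possibly restricting to a dense affine open of $X$, into the standard commuting partial differential form over the constant field $\CC$.

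Next, I would apply the paper's main theorem (the partial differential extension of Theorem \ref{udi-ghj} announced in the abstract) to $X$ with coefficient field $F = F_0 = \CC$. This provides a dominant differential-rational map $\pi \colon X \dashrightarrow V^{\Delta}$ into the $\Delta$-constant points of some algebraic variety $V$ over $\CC$, such that all but finitely many codimension one $\Delta$-subvarieties of $X$ over $\CC$ arise as pull-backs via $\pi$ of algebraic subvarieties of $V$. If $\dim V \geq 1$ then the pull-back of any nonconstant rational function on $V$ would be an element of $\Frac(R)^\Delta \setminus \CC$, contradicting the standing hypothesis; hence $V$ must be zero-dimensional, forcing only finitely many exceptional codimension one $\Delta$-subvarieties and completing the contrapositive.

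The principal obstacle is the Frobenius-integrability step reducing noncommuting $\CC$-linear derivations to a genuine commuting partial differential structure \emph{without enlarging the field of constants}; this is where the finite generation of $R$ over $\CC$, and the existence of well-behaved geometry at the generic point of $X$, play the decisive role. The remaining content of the proof is then absorbed into the paper's main theorem itself, whose own difficulty lies in extending Hrushovski's Theorem \ref{udi-ghj} from the ordinary case with constant coefficients to the partial case where the coefficient field need not be constant.
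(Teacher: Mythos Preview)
Your contrapositive framing and geometric translation are correct, but the route you take diverges from the paper's in a significant way. The paper does \emph{not} reduce to commuting derivations, and does not invoke Theorem~B. Instead, the result is derived as an immediate corollary (Corollary~\ref{pdme-version}) of Theorem~A (Theorem~\ref{ghjD}), which is stated for $D$-varieties of type $(m,r)$. When $r=0$, a $D$-variety of type $(m,0)$ is precisely an affine algebraic variety $V$ together with $m$ regular sections of $TV\to V$, equivalently $m$ derivations on the coordinate ring, with no commutativity hypothesis. The proof is then one sentence: take $V=\spec R$, let $S\subseteq T^mV$ be the image of the section determined by $\delta_1,\dots,\delta_m$, observe that height one prime $\Delta$-ideals are exactly the codimension one $D$-subvarieties and that $\Frac(R)^\Delta=\CC$ says exactly that $(V,S)$ has no nonconstant $D$-constants, and apply Theorem~\ref{ghjD}.

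Your Frobenius-integrability step, the part you flag as ``the principal obstacle'', is thus entirely absent from the paper's argument. The whole point of introducing $D$-varieties of type $(m,r)$ is that this framework simultaneously absorbs noncommuting derivations (the case $r=0$) and the passage from Kolchin-style $\Delta$-varieties to algebraic data (the case $r>0$); the partial Jouanolou--Hrushovski--Ghys theorem is then proved once in that common setting. Your route, manufacturing a commuting frame and then invoking the $\operatorname{DCF}_{0,m}$ machinery of Theorem~B, can likely be made rigorous (a commuting $\Frac(R)$-basis for an involutive subspace of $\operatorname{Der}_\CC(\Frac(R))$ exists by a standard normal-form computation in characteristic zero, and constants and prime differential ideals are preserved on a suitable localisation), but it is circuitous: it appeals to the heavier Theorem~B where the lighter Theorem~A already applies verbatim, and it manufactures an obstacle the paper's formulation was designed to sidestep.
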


\noindent
When $m=1$ this is by standard methods seen to be a special case of Hrushovski's theorem, the so-called finite dimensional case when the transcendence degree of $\mathbb C\langle X\rangle$ over $\mathbb C$ is finite.
As the authors of~\cite{pdme} could not see how to extend Hrushovski's geometric proof to $m> 1$, an alternative algebraic argument was given in~\cite[$\S 6$]{pdme}.
One of the motivations for the present article is to extend Hrushovski's proof of Theorem~\ref{udi-ghj} to a setting that includes Theorem~\ref{bell-ghj}.
This is accomplished in~$\S$\ref{section-dvar}.

As it turns out, the right general setting is that of {\em $D$-varieties of type $(m,r)$}: algebraic varieties $V$ equipped with a subvariety $S$ of the $m$-fold fibred cartesian power of the tangent bundle, such that $S_a$ is an $r$-dimensional affine subspace of $(T_aV)^m$ for each $a\in V$.
To see what this has to do with Theorem~\ref{bell-ghj}, note that when $r=0$ the subvariety $S$ is given by $m$ regular sections to the tangent bundle, which in turn determines $m$ derivations on the co-ordinate ring of $V$.
On the other hand, if we specialise $m$ to $1$ we are in the setting of Theorem~\ref{udi-ghj} because, as Hrushovski shows in~\cite[Lemma 2.2]{hrushovski-jouanolou}, every $\delta$-variety together with its codimension one $\delta$-subvarieties is captured by a certain $D$-variety of type $(1,r)$ for some $r\geq 0$.
What we prove, for general $m$ and $r$, is the following.

\begin{customthm}{A}
Suppose $(V,S)$ is a $D$-variety of type $(m,r)$ such that for every $f\in\mathbb C(V)\setminus \mathbb C$ and general $a\in V$, it is not the case that $(d_af)^m$ vanishes on~$S_a$.
Then $(V,S)$ has only finitely many codimension one $D$-subvarieties over $\mathbb C$.
\end{customthm}

\noindent
This appears as Theorem~\ref{ghjD} below, following closely the proof of Theorem~\ref{udi-ghj}.
Theorem~\ref{udi-ghj} was itself obtained by Hrushovski as an application of a suitably generalized form of Jouanolou's~\cite{jouanolou} work on solutions to analytic Pfaffian equations.
We take this opportunity, in $\S$\ref{section-exposition}, to give a detailed exposition of Hrushovski's generalization, which we call the Jouanolou-Hrushovski-Ghys theorem because we utilise some simplifications appearing in Ghys'~\cite{ghys} improvement on Jouanolou's theorem.
In $\S$\ref{section-partial} we show how to extend the Jouanolou-Hrushovski-Ghys theorem to the case of arbitrary $m\geq 1$.
Theorem~A now follows exactly as it did for Hrushovski in~\cite{hrushovski-jouanolou}.

A second related motivation for this article was simply to extend Theorem~\ref{udi-ghj} to differential varieties in the partial case.
That is, to prove the theorem for differential subvarieties of a differential variety when $\delta$ is replaced by $m$ commuting derivations $\Delta:=\{\delta_1,\dots,\delta_m\}$.
It turns out that to deduce this from Theorem~A does require new work, and involves a seemingly new finiteness principle for partial differential equations. 
In $\S$\ref{section-dav} we use (partial) differential algebra and some combinatorics of initial sets to show that there is a bound on how many prolongations one has to take of a given $\Delta$-variety to capture all of its codimension one $\Delta$-subvarieties; this finiteness principle appears as Proposition~\ref{boundcodim1}. 
With this in place, we can use Theorem~A to prove the partial differential version of Theorem~\ref{udi-ghj}; it appears as Theorem~\ref{ghj-dav} below.
In fact, with a little more work we are able to both remove the assumption that the $\Delta$-variety $X$ is defined over the constants, and also formulate a version that does not assume that $X$ has no new $\Delta$-rational constants but rather is relative to whatever the constants of the $\Delta$-rational function field of $X$ are.
Here is the statement which appears as Corollary~\ref{ghj-dav-general} below.

\begin{customthm}{B}
Suppose $(K,\Delta)$ is a differentially closed field of characteristic zero in several commuting derivations, $F\subseteq\mathcal C$ is a subfield of the total constant field, $L$ is a finitely generated $\Delta$-field extension of $F$, and  $X\subseteq K^n$ is an $L$-irreducible $\Delta$-variety.
There exists an algebraic variety $V$ over the constants $L_0$ of $L$, and a dominant $\Delta$-rational map $f:X\to V(\mathcal C)$ over $L$, such that all but finitely many codimension one $L$-irreducible $\Delta$-subvarieties of $X$ are $L$-irreducible components of $\Delta$-subvarieties of the form $f^{-1}\big(W(\mathcal C)\big)$ where $W\subseteq V$ is an algebraic subvariety over $L_0$.
\end{customthm}

This theorem is particularly useful when applied to low dimensional differential varieties.
For example, if $X$ is one-dimensional then the codimension one subvarieties over $L$ arise from the $L^{\alg}$-points of $X$, and the theorem connects these points to the constant points of an algebraic curve.
As an application we are able to prove the existence of height bounds for solutions in $\CC(t)^{\alg}$ to first order algebraic differential equation.
The following appears as Theorem~\ref{boundheight} below.

\begin{customthm}{C}
Suppose $P\in\CC(t)[x,y]$ is a nonzero polynomial in two variables over the field of rational functions $\CC(t)$.
There exists $N=N(P)\in \mathbb N$ such that all solutions to $P(x,x')=0$ in $\big(\CC(t)^{\alg},\frac{d}{dt}\big)$ are of height $\leq N$.
\end{customthm}

\noindent
The height here is the function field absolute logarithmic height of Lang~\cite{lang}, which extends degree on rational solutions.
For rational solutions the existence of such a degree bound is a theorem of Eremenko~\cite{Erem}, where the extension to algebraic solutions was asked for.
A version of Theorem~C over a multivariate function field, and involving partial differentiation, can also be deduced from Theorem~B, see the discussion following Theorem~\ref{boundheight}.

We also present some model-theoretic applications of Theorem~B that arise from known consequences of 
Theorem~\ref{udi-ghj} which can now be extended to the partial and nonconstant coefficient field setting.
For example, as per Hrushovski's original motivation, we get that every one-dimensional strongly minimal set in $\operatorname{DCF}_{0,m}$ that is orthogonal to the constants is $\aleph_0$-categorical.
Another model-theoretic consequence of Theorem~B has to do with the comparing Lascar rank and Morley rank.
Hrushovski and Scanlon gave an example in~\cite{HrushovskiScanlon1999} of a differential algebraic variety of dimension five in which Lascar rank and Morley differ in $\operatorname{DCF}_0$.
They note that Marker and Pillay had an argument (also unpublished but communicated to us by the former) that used Theorem~\ref{udi-ghj} to show that for two-dimensional definable sets over the constants, Lascar rank and Morley rank agree.
Using Theorem~B in place of Theorem~\ref{udi-ghj}, we extend the Marker-Pillay result to definable sets in $\operatorname{DCF}_{0,m}$ over arbitrary fields of definition.
Both of these applications are given in~$\S$\ref{section-applications}, appearing as Theorems~\ref{lascarmorley} and~\ref{aleph0cat}.

We do not address in this paper the question of explicit bounds in the above finiteness theorems.
However, we do nothing here that is inherently ineffective, and as explicit bounds can be given for the original theorem of Jouanalou, one should in principle be able to give effective versions of our theorems too.

\medskip
{\em Acknowledgement}.
We would like to thank Matthias Aschenbrenner, Dave Marker, David McKinnon, and Tom Scanlon for a number of useful discussions.

\bigskip
\section{An exposition of the Jouanolou-Hrushovski-Ghys theorem}
\label{section-exposition}

\noindent
In this section we aim to give a detailed exposition of Hrushovski's~\cite{hrushovski-jouanolou}  unpublished generalization of Jouanolou's~\cite{jouanolou} theorem from $1$-forms to $p$-forms.
Our exposition is highly influenced by Ghys'~\cite{ghys} improvement on Jouanolou's theorem.

Let $X$ be a compact complex manifold.
By a {\em codimension $p$ holomorphic foliation on $X$} we will mean,
\begin{itemize}
\item
an open cover $(U_i)_{i\in I}$ of $X$,
\item
on each $U_i$ a $p$-fold wedge product of holomorphic $1$-forms,
$$0\neq \omega_i=\alpha_1\wedge\dots\wedge\alpha_p$$
and,
\item on each intersection $U_i\cap U_j$ a nonvanishing holomorphic function $g_{ij}$ such that $\omega_i=g_{ij}\omega_j$.
\end{itemize}
If we let $\mathcal L$ be the line bundle on $X$ defined by $(g_{ij})$, then the $\omega_i$'s determine a global holomorphic $p$-form on $X$ with values in $\mathcal L$, that is, $\omega\in H^0(X,\Omega^p\otimes\mathcal L)$.
Note that at each point $a\in X$, $\omega$ defines a codimension $p$ subspace of the tangent space $T_aX$, namely $\displaystyle W_a:= \bigcap_{\ell=1}^p\ker(\alpha_\ell)_a$ where $a\in U_i$ and $\omega_i=\alpha_1\wedge\dots\wedge\alpha_p$ on $U_i$.

By a {\em solution to $\omega=0$} we will mean a hypersurface $Y$ on $X$ whose tangent space at each point $a\in Y$ contains the subspace $W_a$.
In other words, possibly after refining the open cover, $Y$ is defined in $U_i$ by the vanishing of a holomorphic function $f_i$ on $U_i$ such that $(df_i)_a$ vanishes on $W_a$.
An equivalent formulation is that $\omega_i\wedge df_i\upharpoonright_{Y\cap U_i}= 0$.
Another equivalent formulation is that the meromorphic $(p+1)$-form $\omega_i\wedge\frac{df_i}{f_i}$ is in fact holomorphic on $U_i$.
To see this last equivalence note that, because $f_i$ generates the ideal of $Y\cap U_i$ in $\mathcal O(U_i)$,  $\omega_i\wedge df_i\upharpoonright_{Y\cap U_i} = 0$ if and only if $\omega_i\wedge df_i\upharpoonright_{Y\cap U_i}=f_i\eta$ for some holomorphic $(p+1)$-form $\eta$ on $U_i$.

A {\em meromorphic first integral} to $\omega$ is by definition a nonconstant meromorphic function on $X$ which is constant on the leaves of the foliation.
That is, an $f\in\mathbb C(X)\setminus\mathbb C$ such that $\omega\wedge df=0$.
Notice that if a mermorphic first integral to $\omega$ exists then $\omega=0$ has infinitely many solutions; namely the level sets of $f$.
The main theorem of this section is a converse to this observation.

\begin{theorem}[Jouanolou-Hrushovski-Ghys]
\label{ghj}
Suppose $X$ is a compact complex manifold.
If $\omega$ is a codimension $p$ holomorphic foliation on $X$ that does not admit a meromorphic first integral then $\omega=0$ has only finitely many solutions.
\end{theorem}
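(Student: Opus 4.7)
The plan is to assume, toward a contradiction, that $\omega$ admits infinitely many solution hypersurfaces $Y_1, Y_2, \ldots$ and to manufacture from them a meromorphic first integral. First, to each solution $Y$ defined locally by a holomorphic function $f_j$ on $U_j$, attach the holomorphic $(p+1)$-form $\omega_j \wedge \frac{df_j}{f_j}$ on $U_j$. Because $\omega_j = g_{ij}\omega_i$ and the logarithmic derivative $\frac{df_j}{f_j}$ is invariant under multiplication of $f_j$ by a nonvanishing holomorphic function, these local $(p+1)$-forms transform by the cocycle $(g_{ij})$ defining $\mathcal{L}$ and glue to a global holomorphic section $\beta_Y \in V := H^0(X, \Omega^{p+1} \otimes \mathcal{L})$. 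Compactness of $X$ forces $V$ to be finite dimensional over $\mathbb{C}$ (Cartan--Serre), so let $d := \dim_\mathbb{C} V$.

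With more than $d$ solutions in hand, linear algebra supplies a nontrivial relation $\sum_{i=1}^{N} \lambda_i \beta_{Y_i} = 0$ in $V$. Unwrapping this chart by chart, the closed meromorphic $1$-form $\eta := \sum_i \lambda_i \frac{df_i}{f_i}$, which has simple poles along $\bigcup_i Y_i$ with residue $\lambda_i$ along $Y_i$, satisfies $\omega \wedge \eta = 0$ globally. If one could arrange the residues $\lambda_i$ to be integers (or all rational, after common rescaling), then $\eta$ would equal $\frac{dg}{g}$ for the single-valued meromorphic function $g = \prod_i f_i^{\lambda_i}$, and the identity $\omega \wedge dg = g\cdot(\omega \wedge \eta) = 0$ together with the fact that $g$ has nontrivial divisor would exhibit $g$ as a nonconstant meromorphic first integral, contradicting the hypothesis.

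The main obstacle is therefore to force integrality of the residues. The crucial input, going back to Jouanolou and sharpened by Ghys, is that for a closed meromorphic $1$-form on a compact complex manifold the residues are constrained to lie in a finitely generated subgroup $\Gamma \leq (\mathbb{C}, +)$; this comes from a period argument, ultimately because the residues are the periods around small loops encircling the pole divisor and these land in a lattice controlled by $H^1$ of the complement of the divisor modulo the finite-dimensional space of holomorphic $1$-forms on $X$. With $N$ taken large compared with the rank of $\Gamma$ together with $d$, a pigeonhole argument among the $\lambda_i$ produced from varying the linear relation in $V$ yields a nontrivial $\mathbb{Z}$-linear relation $\sum_j n_j \beta_{Y_j} = 0$, whose associated $\eta = \sum_j n_j \frac{df_j}{f_j}$ has integer residues and delivers the meromorphic first integral $g = \prod_j f_j^{n_j}$ as above.

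Passing from Jouanolou's original $p=1$ case to Hrushovski's higher-$p$ generalization affects only the target bundle $\Omega^{p+1} \otimes \mathcal{L}$ of the first step; the linear-algebra/pigeonhole and residue/period analyses transfer essentially unchanged. The delicate point that I expect to cost the most work is precisely the residue-bounding argument, since one must identify the correct finitely generated lattice in $\mathbb{C}$ in which the residues live and quantify how many solutions are needed to force a $\mathbb{Z}$-linear dependence; this is where Ghys' simplifications enter and where care is needed so that the higher-$p$ case does not require any genuinely new ingredient beyond what is already present for $p=1$.
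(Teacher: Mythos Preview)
Your first step already contains an error: the logarithmic derivative $\frac{df}{f}$ is \emph{not} invariant under multiplying $f$ by a nonvanishing holomorphic function $h$; rather $\frac{d(hf)}{hf} = \frac{df}{f} + \frac{dh}{h}$. Consequently the local forms $\omega_i \wedge \frac{df_i}{f_i}$ do not transform by the cocycle $(g_{ij})$ alone, and your $\beta_Y$ is not a well-defined element of $H^0(X, \Omega^{p+1}\otimes\mathcal{L})$. The paper handles this by first passing through the map $d\log:\Div(X)\otimes\mathbb{C}\to H^1(X,\Omega^1_{\cl})$, whose target is finite-dimensional by compactness; only combinations in $\ker(d\log)$ give rise to genuine global closed meromorphic $1$-forms $\xi_x$ (well-defined modulo $H^0(X,\Omega^1_{\cl})$), and for these one does obtain $\omega\wedge\xi_x\in H^0(X,\Omega^{p+1}\otimes\mathcal{L})$. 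That gap is repairable. The more serious problem is your ``residues lie in a finitely generated lattice'' step. This is essentially Jouanolou's original strategy, and it is precisely where his argument required extra cohomological hypotheses on $X$ (K\"ahler-type conditions, control of the N\'eron--Severi group, etc.) that are \emph{not} assumed here. Your appeal to periods of small loops around the polar divisor does not produce such a lattice on an arbitrary compact complex manifold: residues of closed meromorphic $1$-forms are not constrained to any fixed finitely generated subgroup of $\mathbb{C}$ in this generality. Ghys' contribution, which the paper follows, was exactly to remove those hypotheses by avoiding the integrality argument altogether.

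Here is what the paper does instead. After reducing to the kernel $K$ of the map $x\mapsto[\omega\wedge\xi_x]$ into $H^0(X,\Omega^{p+1}\otimes\mathcal{L})/\omega\wedge H^0(X,\Omega^1_{\cl})$, one chooses representatives with $\omega\wedge\xi_x=0$. The $\xi_x$ live in the finite-dimensional $\mathbb{C}(X)$-vector space $H^0(X,\Omega^1_{\cl,\mer})$; let $\ell$ be the $\mathbb{C}(X)$-dimension of their span and fix a basis $\xi_{a_1},\dots,\xi_{a_\ell}$. For any other $\xi_{x_1},\dots,\xi_{x_\ell}$ one has $\xi_{x_1}\wedge\cdots\wedge\xi_{x_\ell}=f\,\xi_{a_1}\wedge\cdots\wedge\xi_{a_\ell}$ for some $f\in\mathbb{C}(X)$. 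Applying $d$ and using that all the $\xi$'s are \emph{closed} gives $df\wedge\xi_{a_1}\wedge\cdots\wedge\xi_{a_\ell}=0$; an elementary exterior-algebra lemma then converts the relations $\omega\wedge\xi_{a_i}=0$ into $\omega\wedge df=0$, so the hypothesis forces $f\in\mathbb{C}$. Thus the $\ell$-fold wedges span a one-dimensional $\mathbb{C}$-space, and a second elementary lemma concludes that the $\mathbb{C}$-span of the $\xi_x$ is finite-dimensional. No integer coefficients, no periods, and no extra topological conditions on $X$ are needed.
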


When $p=1$ and under some additional assumptions on $X$ (satisfied, for example, by smooth projective algebraic varieties), this is a theorem of Jouanolou appearing in the 1978 paper~\cite{jouanolou}.
With the same assumptions on $X$ as Jouanolou, and following his argumentation, Hrushovski proved the theorem for general $p$ in the unpublished manuscript~\cite{hrushovski-jouanolou} dating from the mid nineteen nineties.
A little later, Ghys~\cite{ghys} generalized Jouanolou's theorem in a different direction, removing the additional assumptions on $X$ and simplifying Jouanolou's argument, though only for $p=1$.
So while the theorem as stated here is formally new, it is obtained by simply combining Hrushovski's and Ghys' generalizations, and our purpose in presenting it here is entirely expository.

Let us denote by $\Div(X)$ the group of Weil divisors on $X$, and consider the logarithmic derivative map
$$d\log:\Div(X)\otimes\mathbb C\to H^1(X,\Omega_{\cl}^1)$$
where $\Omega_{\cl}^1$ is the sheaf of closed holomorphic $1$-forms on $X$.
To describe this map it suffices to define $d\log(Y)$ for irreducible hypersurfaces $Y$ on $X$, and then extend by $\mathbb C$-linearity.
If $Y$ is defined locally by $f_i=0$ then $d\log(Y)$ is the cocycle $\left(\frac{1}{g_{ij}}dg_{ij}\right)$ where $f_i=g_{ij}f_j$ on $U_i\cap U_j$.

There is a canonical injective $\mathbb C$-linear map
$$\xi:\ker(d\log)\to H^0(X,\Omega_{\cl,\mer}^1)/H^0(X,\Omega_{\cl}^1)$$
where $\Omega_{\cl,\mer}^1$ denotes the sheaf of closed meromorphic $1$-forms on $X$, that we now describe.
Suppose $x=\sum_\alpha\lambda_\alpha Y_\alpha\in\ker(d\log)$.
So if (after refining the cover) $Y_\alpha$ is given by $f_i^\alpha=0$ in $U_i$, and $f_i^\alpha=g^\alpha_{ij}f^\alpha_j$ on $U_i\cap U_j$, then the cocycle $\displaystyle \left(\sum_\alpha\lambda_\alpha\frac{dg_{ij}^\alpha}{g_{ij}^\alpha}\right)$ is a coboundary.
That is, $\sum_\alpha\lambda_\alpha\frac{dg_{ij}^\alpha}{g_{ij}^\alpha}=v_j-v_i$ on $U_i\cap U_j$, where $v_i$ and $v_j$ are closed holomorphic $1$-forms on $U_i$ and $U_j$ respectively.
It follows that
\begin{eqnarray*}
v_i+\sum_\alpha\lambda_\alpha\frac{df_i^\alpha}{f_i^\alpha}
&=&
v_i+\sum_\alpha\lambda_\alpha\frac{d(g_{ij}^\alpha f_j^\alpha)}{g_{ij}^\alpha f_j^\alpha}\\
&=&
v_i+\sum_\alpha\lambda_\alpha\frac{dg_{ij}^\alpha}{g_{ij}^\alpha} + \sum_\alpha\lambda_\alpha\frac{df_j^\alpha}{f_j^\alpha}\\
&=&
v_j+\sum_\alpha\lambda_\alpha\frac{df_j^\alpha}{f_j^\alpha}
\end{eqnarray*}
on $U_i\cap U_j$.
That is, $\left(v_i+\sum_\alpha\lambda_\alpha\frac{df_i^\alpha}{f_i^\alpha}\right)$ defines a global closed mermorphic $1$-form on $X$; this is what $\xi(x)$ is.
To see that $\xi$ is well-defined modulo $H^0(X,\Omega_{\cl}^1)$, a similar computation shows that if in the above construction we chose representatives $(\overline f_i^\alpha)$ and $(\overline v_i)$ instead, then writing $\overline f_i^\alpha=h_i^\alpha f_i^\alpha$ for some $h_i^\alpha$ a nowhere vanishing holomorphic function on $U_i$,
$$\left(\overline v_i+\sum_\alpha\lambda_\alpha\frac{d\overline f_i^\alpha}{\overline f_i^\alpha}\right) - \left(v_i+\sum_\alpha\lambda_\alpha\frac{df_i^\alpha}{f_i^\alpha}\right) = \overline v_i- v_i +\sum_\alpha\lambda_\alpha\frac{dh_i^\alpha}{h_i^\alpha}$$ which is a closed holomorphic $1$-form on $U_i$.
That they patch to produce an element of $H^0(X,\Omega_{\cl}^1)$ is a straightforward verification.

The map $\xi$ is injective because, as pointed out by Ghys~\cite[p.178]{ghys}, from the meromorphic $1$-form $\left(v_i+\sum_\alpha\lambda_\alpha\frac{df_i^\alpha}{f_i^\alpha}\right)$ we can recover the $Y^\alpha$ as the poles and the $\lambda^\alpha$ as the residues.

\begin{proof}[Proof of Theorem~\ref{ghj}]
Consider the $\mathbb C$-linear subspace of $\Div(X)\otimes\mathbb C$ spanned by solutions to $\omega=0$.
Namely,
$$\Div(\omega):=\left\{\sum_\alpha\lambda_\alpha Y_\alpha : \lambda_\alpha\in\mathbb C, Y_\alpha\text{ a solution to }\omega=0\right\}$$
In order to prove the Theorem we will assume that $\omega$ has no meromorphic first integral and show that $\Div(\omega)$ is a finite dimensional vector space.

Restricting $d\log:\Div(X)\otimes\mathbb C\to H^1(X,\Omega_{\cl}^1)$ to $\Div(\omega)$, and using the fact that $H^1(X,\Omega_{\cl}^1)$ is finite dimensional (as $X$ is compact), it suffices to show that
$$\Div_\circ(\omega):=\Div(\omega)\cap\ker(d\log)$$
is finite dimensional.

Next we can restrict the map $\xi$ constructed earlier and consider
$$\xi:\Div_\circ(\omega)\to H^0(X,\Omega_{\cl,\mer}^1)/H^0(X,\Omega_{\cl}^1)$$
Looking at that construction we see that if $x\in\Div_\circ(\omega)$ and $\xi_x\in H^0(X,\Omega_{\cl,\mer}^1)$ is a representative of $\xi(x)$, then $\omega\wedge\xi_x$, which is {\em a priori} in $H^0(X,\Omega^{p+1}_{\mer}\otimes\mathcal L)$, actually lands in $H^0(X,\Omega^{p+1}\otimes\mathcal L)$.
This follows from the fact that if $f_i$ defines a solution to $\omega=0$ in $U_i$ then by definition $\omega_i\wedge\frac{df_i}{f_i}$ is a {\em holomorphic} $(p+1)$-form on $U_i$.
So we obtain a $\mathbb C$-linear map
$\Div_\circ(\omega)\to H^0(X,\Omega^{p+1}\otimes\mathcal L)/\omega\wedge H^0(X,\Omega_{\cl}^1)$
given by taking $x$ to the class of $\omega\wedge\xi_x$.
The right-hand-side being finite dimensional, it suffices to show that the kernel of this map, let us denote it by $K$, is finite dimensional.

For each $x\in K$ we can choose a representative $\xi_x\in H^0(X,\Omega_{\cl,\mer}^1)$ of $\xi(x)$ such that $\omega\wedge\xi_x=0$.
Indeed, by choice of $K$, $\omega\wedge \xi_x=\omega\wedge\eta$ for some closed holomorphic $1$-form $\eta$, and we can replace $\xi_x$ with $\xi_x-\eta$.

By the injectivity of $\xi$, it will suffice to show that $\xi(K)$ is a finite dimensional $\mathbb C$-subspace of $H^0(X,\Omega_{\cl,\mer}^1)/H^0(X,\Omega_{\cl}^1)$.
This in turn reduces to showing that
$$\Xi:=\span_{\mathbb C}\{\xi_x:x\in K\}$$
is finite dimensional.
Note that $\Xi$ is a $\mathbb C$-subspace of the finite dimensional $\mathbb C(X)$-vector space $H^0(X,\Omega_{\cl,\mer}^1)=H^0(X,\Omega_{\cl}^1)\otimes_{\mathbb C}\mathbb C(X)$, where $\mathbb C(X)$ is the meromorphic function field of $X$.
By general exterior algebra (see Lemma~\ref{udi-lemma1.5}), it suffices to prove that for some $\ell\geq 1$,
$$B_\ell:=\span_{\mathbb C}\left\{\xi_{x_1}\wedge\dots\wedge \xi_{x_{\ell}}:x_1,\dots,x_\ell\in K\right\}$$
is a nontrivial finite dimensional $\mathbb C$-vector space; where the wedge product here is being taken in the sense of the $\mathbb C(X)$-vector space $H^0(X,\Omega_{\cl,\mer}^1)$.
We will work with $\ell$ equal to the dimension of the $\mathbb C(X)$-subspace generated by $\Xi$, and show that then $\dim_{\mathbb C}B_\ell=1$.
As we may assume that $K$ is not trivial (or else we are done), neither is $\Xi$, and so $\ell\geq 1$.

Let $\xi_{a_1},\dots,\xi_{a_\ell}$ be a basis for $\span_{\mathbb C(X)}\Xi$.
It follows by $\mathbb C(X)$-linear independence that
$\xi_{a_1}\wedge\dots\wedge\xi_{a_\ell}\neq 0$.
Moreover, for any $x_1,\dots,x_\ell\in K$, as each $\xi_{x_i}\in\span_{\mathbb C(X)}\{\xi_{a_1}.\dots,\xi_{a_\ell}\}$, straightforward exterior algebra shows that
\begin{equation*}
\xi_{x_1}\wedge\dots\wedge\xi_{x_\ell}=f\xi_{a_1}\wedge\dots\wedge\xi_{a_\ell}
\end{equation*}
for some $f\in\mathbb C(X)$.
Since we are working with closed $1$-forms here,
\begin{eqnarray*}
0
&=&
d(\xi_{x_1}\wedge\dots\wedge\xi_{x_\ell})\\
&=&
d(f\xi_{a_1}\wedge\dots\wedge\xi_{a_\ell})\\
&=&
df\wedge\xi_{a_1}\wedge\dots\wedge\xi_{a_\ell} +fd(\xi_{a_1}\wedge\dots\wedge\xi_{a_\ell})\\
&=&
df\wedge\xi_{a_1}\wedge\dots\wedge\xi_{a_\ell}
\end{eqnarray*}
But each $\omega\wedge\xi_{a_i}=0$, and so it follows from general exterior algebra (see Lemma~\ref{udi-lemma1.4}) that $\omega\wedge df=0$.
That is, as $\omega$ has no meromorphic first integral by assumption, $f$ must be a constant.
We have shown that $\dim_{\mathbb C}B_\ell=1$, and hence $\dim_{\mathbb C}\Xi$ is finite, as desired.
\end{proof}

\bigskip
\section{The partial case}
\label{section-partial}

\noindent
We would like to apply the Jouanolou-Hrushovski-Ghys theorem in the ``partial" setting where we replace the holomorphic tangent bundle by its $m$-fold direct sum.
No new ideas are required to make the proof go through, however there are some subtleties involved in setting things up correctly.

Let $T^mX\to X$ the direct sum of the holomorphic tangent bundle of $X$ with itself $m$~times.
As a complex manifold it is the $m$-fold fibred cartesian power of $TX$ over~$X$, so that for each $a\in X$, $(T^mX)_a=(T_aX)^m$.
We denote by $\Omega^{(1,m)}$ the sheaf of germs of holomorphic sections to the dual bundle of $T^mX\to X$.
One can of course identify this with $\bigoplus_{k=1}^m\Omega^1$, but for our purposes, namely for encoding families of subspaces of $(T_aX)^m$ as $a$ varies in $X$, we find it more convenient to work directly with $\Omega^{(1,m)}$.
We call it the sheaf of {\em holomorphic $m$-fold $1$-forms} on $X$.
The $m$-fold {\em $p$-forms} are then obtained by taking $p$th exterior powers, $\Omega^{(p,m)}:=\bigwedge^p\Omega^{(1,m)}$.
We will also consider $\Omega^{(p,m)}_{\mer}:=\Omega^{(p,m)}\otimes_{\mathbb C}\mathbb C(X)$, the sheaf of {\em meromorphic} $m$-fold $p$-forms on $X$.

For each $k=1,\dots, m$, the differential
$d_k:\mathcal O(U)\to \Omega^{(1,m)}(U)$
is given by
\begin{equation}
\label{dk}
(d_kf)_a(v_1,\dots,v_m):=(df)_a(v_k)
\end{equation}
for all $a\in U$ and $v_1,\dots, v_m\in T_aX$.
This can be extended to meromorphic functions in the same way.

An {\em $m$-fold holomorphic foliation on $X$ of codimension $p$} is a global holomorphic $m$-fold $p$-form on $X$ with values in a line bundle $\mathcal L$, say $\omega\in H^0(X,\Omega^{(p,m)}\otimes\mathcal L)$, such that for an open cover $(U_i)_{i\in I}$ of $X$ we have
$$0\neq \omega_i:=\omega\upharpoonright_{U_i}=\alpha_1\wedge\dots\wedge\alpha_p$$
where the $\alpha_\ell$ are holomorphic $m$-fold $1$-forms on $U_i$.

For each $a\in X$ we denote by $W_a\subseteq(T_aX)^m$ the codimension $p$ subspaces determined by $\omega_a$, namely, if $a\in U_i$ then $\displaystyle W_a:= \bigcap_{\ell=1}^p\ker(\alpha_\ell)_a$.

A {\em solution to $\omega=0$} is an irreducible hypersurface $Y$ on $X$ given locally by $f_i=0$ in $U_i$ such that $(d_kf_i)_a$ vanishes on $W_a$ for all $a\in Y\cap U_i$ and all $k=1,\dots,m$.
Equivalently, $(\omega_i\wedge d_kf_i)\upharpoonright_{Y\cap U_i}=0$ in $\Omega^{(p+1,m)}(U_i)$ for all~$k$.

A {\em meromorphic first integral} to $\omega$ is a nonconstant meromorphic function $f$ on $X$ with $\omega\wedge d_kf=0$ for all $k=1,\dots,m$.

\begin{theorem}
\label{partial-ghj}
Suppose $X$ is a compact complex manifold and $\omega$ is an $m$-fold holomorphic foliation on $X$ of codimension $p$.
If $\omega$ has no meromorphic first integral then $\omega=0$ has only finitely many solutions.
\end{theorem}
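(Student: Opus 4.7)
\emph{Proof plan.} I would follow the proof of Theorem~\ref{ghj} step-by-step, with $m$-fold analogues of each ingredient, paying special attention to the closed-form sheaf and the derivation used in the wedge argument. Writing $\Omega^{(1,m)}_{\cl}$ for the sheaf of closed $m$-fold $1$-forms (so that $\Omega^{(1,m)}_{\cl} = \bigoplus_k \Omega^1_{\cl}$ under the identification $\Omega^{(1,m)} = \bigoplus_k \Omega^1$, and both $H^0$ and $H^1$ are finite-dimensional), the ordinary $d\log : \Div(X) \otimes \mathbb{C} \to H^1(X, \Omega^1_{\cl})$ reduces the problem, exactly as in Theorem~\ref{ghj}, to showing that $\Div_\circ(\omega) := \Div(\omega) \cap \ker(d\log)$ is finite-dimensional over $\mathbb{C}$.

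For each $k = 1, \dots, m$, I construct $\xi^{(k)} : \Div_\circ(\omega) \to H^0(X, \Omega^{(1,m)}_{\cl, \mer})/H^0(X, \Omega^{(1,m)}_{\cl})$ by the local formula $\xi^{(k)}_x = \iota_k(v_i) + \sum_\alpha \lambda_\alpha d_k f^\alpha_i / f^\alpha_i$, with $\iota_k$ the inclusion into the $k$-th summand and $(v_i)$ the cochain produced by $x \in \ker(d\log)$ in the ordinary construction of $\xi$. The same residue argument makes each $\xi^{(k)}$ injective, and the partial solution condition now forces $\omega \wedge \xi^{(k)}_x \in H^0(X, \Omega^{(p+1, m)} \otimes \mathcal{L})$. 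The combined map $x \mapsto (\omega \wedge \xi^{(k)}_x)_{k=1}^m$ into the $m$-fold direct sum of $H^0(X, \Omega^{(p+1, m)} \otimes \mathcal{L}) / \omega \wedge H^0(X, \Omega^{(1,m)}_{\cl})$ has finite-dimensional target, so its kernel $K$ has finite codimension in $\Div_\circ(\omega)$, and after modifying representatives we may assume $\omega \wedge \xi^{(k)}_x = 0$ for every $x \in K$ and every $k$.

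To bound $\dim_{\mathbb{C}} K$, I pool all the $\xi^{(k)}_x$'s into $\Xi := \span_{\mathbb{C}} \{\xi^{(k)}_x : x \in K,\ k = 1, \dots, m\}$ inside $H^0(X, \Omega^{(1,m)}_{\cl, \mer}) = H^0(X, \Omega^{(1,m)}_{\cl}) \otimes_{\mathbb{C}} \mathbb{C}(X)$, which is finite-dimensional over $\mathbb{C}(X)$. Let $\ell := \dim_{\mathbb{C}(X)} \Xi$, pick a $\mathbb{C}(X)$-basis $\zeta_1, \dots, \zeta_\ell$ from $\Xi$ itself (each satisfying $\omega \wedge \zeta_i = 0$), and for any $y_1, \dots, y_\ell \in \Xi$ write $y_1 \wedge \dots \wedge y_\ell = f \cdot \zeta_1 \wedge \dots \wedge \zeta_\ell$ for a unique $f \in \mathbb{C}(X)$. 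For each $j$, applying the extension of $d_j$ to $\Omega^{(\bullet, m)}$ kills the wedge by closedness of the $y_i$'s and gives $d_j f \wedge \zeta_1 \wedge \dots \wedge \zeta_\ell = 0$; combined with $\omega \wedge \zeta_i = 0$ and Lemma~\ref{udi-lemma1.4}, this forces $\omega \wedge d_j f = 0$. Since this holds for every $j$, the function $f$ is a meromorphic first integral of $\omega$, and hence constant by hypothesis. Therefore $B_\ell = \mathbb{C} \cdot (\zeta_1 \wedge \dots \wedge \zeta_\ell)$ is one-dimensional, Lemma~\ref{udi-lemma1.5} gives $\Xi$ finite-dimensional over $\mathbb{C}$, and injectivity of any $\xi^{(k)}$ concludes the bound on $\dim_{\mathbb{C}} K$.

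\emph{Main obstacle.} The hardest part is the setup step that the authors flag: arranging the extensions of the $d_j$-derivations on $\Omega^{(\bullet, m)}$ together with a notion of closedness for $m$-fold $1$-forms so that each $\xi^{(k)}_x$ is $d_j$-closed for every $j$ (hence $d_j(y_1 \wedge \dots \wedge y_\ell) = 0$ via Leibniz), while the Leibniz expansion of the wedge relation still yields the identity $d_j f \wedge \zeta_1 \wedge \dots \wedge \zeta_\ell = 0$ needed to invoke Lemma~\ref{udi-lemma1.4}. The essential conceptual point distinguishing this from Theorem~\ref{ghj} is that the wedge argument must be run on the \emph{pooled} family $\bigcup_k \xi^{(k)}(K)$ rather than one $k$ at a time; only then does a single $f \in \mathbb{C}(X)$ appear in the wedge relation, so that the stronger partial no-first-integral hypothesis (which requires all $m$ vanishings simultaneously) can be brought to bear to conclude $f$ is constant.
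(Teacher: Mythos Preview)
Your overall strategy mirrors the paper's, but the one organizational choice where you diverge is exactly the obstacle you flag, and it is a genuine gap you have not closed. By pooling the forms $\xi^{(k)}_x$ across all $k$ into a single $\Xi$, your basis $\zeta_1,\dots,\zeta_\ell$ may mix different copies, and running the closedness argument then requires extending each $d_j$ to a derivation of $\Omega^{(\bullet,m)}$ that annihilates every $\xi^{(k)}_x$ for all $j$. Such an extension is problematic: a derivation on $\bigwedge^\bullet\big(\bigoplus_k\Omega^1\big)$ that extends $d_j$ on functions and satisfies Leibniz cannot simply kill $\iota_k(\eta)$ for $k\neq j$ (test it on $f\cdot\iota_k(\eta)$), and there is no evident definition making every $\xi^{(k)}_x$ simultaneously $d_j$-closed for all~$j$. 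So the step ``applying the extension of $d_j$ to $\Omega^{(\bullet,m)}$ kills the wedge by closedness'' is not justified as written.

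The paper sidesteps this entirely by a different packaging. Rather than pooling, it forms the tuple $\bar\xi_x:=(\xi_{1x},\dots,\xi_{mx})\in H^0\big(X,\Omega^{(1,m)}_{\mer}\big)$ and sets $\Xi:=\{\bar\xi_x:x\in K\}$. With a $\mathbb C(X)$-basis $\bar\xi_{a_1},\dots,\bar\xi_{a_\ell}$ for $\span_{\mathbb C(X)}\Xi$, any $\bar\xi_{x_i}$ satisfies $\bar\xi_{x_i}=\sum_jg_{ij}\bar\xi_{a_j}$ with scalars $g_{ij}\in\mathbb C(X)$ that do not depend on $k$; reading this componentwise gives $\xi_{kx_i}=\sum_jg_{ij}\xi_{ka_j}$ for each fixed $k$, hence $\xi_{kx_1}\wedge\dots\wedge\xi_{kx_\ell}=f\,\xi_{ka_1}\wedge\dots\wedge\xi_{ka_\ell}$ with the \emph{same} $f=\det(g_{ij})$ for every $k$. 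The closedness step is then run one $k$ at a time inside $\Omega^{(1,m)}_{k,\cl,\mer}\cong\Omega^1_{\cl,\mer}$, using only the ordinary exterior derivative, to obtain $d_kf\wedge\xi_{ka_1}\wedge\dots\wedge\xi_{ka_\ell}=0$; combined with $\omega\wedge\xi_{ka_i}=0$ and Lemma~\ref{udi-lemma1.4} this gives $\omega\wedge d_kf=0$. Since the same $f$ appears for all $k$, the no-first-integral hypothesis forces $f\in\mathbb C$. In short, the ``single $f$'' you rightly identify as essential is produced not by pooling the $\xi^{(k)}_x$'s but by bundling them into tuples, so that the differential computation never has to leave a single copy of $\Omega^1$.
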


\begin{proof}
When $m=1$ this is Theorem~\ref{ghj}.
As before, let $\Div(\omega)$ be the $\mathbb C$-linear subspace of $\Div(X)\otimes\mathbb C$ spanned by hypersurfaces that are solutions to $\omega=0$.
Again it suffices to prove that $\Div_\circ(\omega):=\Div(\omega)\cap\ker(d\log)$ is finite dimensional.

Fixing $k=1,\dots,m$, let $\Omega^{(1,m)}_k$ denote the copy of $\Omega^1$ in $\Omega^{(1,m)}$ obtained by restricting to the $k$th factor.
That is, $f\in \Omega^1(U)$ is viewed as an element of $\Omega^{(1,m)}(U)$ by setting $f_a(v_1,\dots,v_m)=v_k$, for all $a\in U$ and $v_1,\dots,v_m\in T_aX$.
Under this embedding, the map $d_k:\mathcal O(U)\to \Omega_k^{(1,m)}(U)$ defined in~(\ref{dk}) above corresponds to the usual differential $d:\mathcal O(U)\to\Omega^1(U)$.
In the same way, we obtain copies $\Omega^{(1,m)}_{k,\cl}$ of $\Omega^1_{\cl}$, and $ \Omega^{(1,m)}_{k,\cl,\mer}$ of $\Omega^1_{\cl,\mer}$.
The injective $\mathbb C$-linear map $\xi:\ker(d\log)\to H^0(X,\Omega_{\cl,\mer}^1)/H^0(X,\Omega_{\cl}^1)$ constructed in the previous section now appears as
$\xi_k:\ker(d\log)\to H^0\big(X,\Omega^{(1,m)}_{k,\cl,\mer}\big)/H^0\big(X,\Omega^{(1,m)}_{k,\cl}\big)$.
So $\xi_k$ is defined just as $\xi$ was but using $d_k$ rather than $d$.

Note that if $(U_i,f_i=0)$ defines a solution to $\omega=0$, then for each $k=1,\dots,m$, the {\em a priori} meromorphic $m$-fold $(p+1)$-form $\omega_i\wedge\frac{d_kf_i}{f_i}$ is in fact holomorphic on~$U_i$.
From this it follows that we obtain a $\mathbb C$-linear map
$$\theta:\Div_\circ(\omega)
\longrightarrow
\bigoplus_{k=1}^m
H^0\big(X,\Omega^{(p+1,m)}\otimes\mathcal L\big)/\omega\wedge H^0\big(X,\Omega_{k,\cl}^{(1,m)}\big)$$
induced by $x\mapsto(\omega\wedge\xi_{1x},\dots,\omega\wedge\xi_{mx})$ where $\xi_{kx}$ is any representative of $\xi_k(x)$.
The right hand side being finite dimensional, we reduce to showing that $\ker\theta$ is finite dimensional.
We will do so by showing that its image under the injective map
$$\bar\xi:=(\xi_1,\dots,\xi_m):\ker(d\log)\longrightarrow
\bigoplus_{k=1}^m
H^0\big(X,\Omega^{(1,m)}_{k,\cl,\mer}\big)/H^0\big(X,\Omega^{(1,m)}_{k,\cl}\big)$$
is finite dimensional.

By definition of $\theta$, for any $x\in\ker\theta$, we can, and do, choose a representative $\xi_{kx}$ of $\xi_k(x)$ such that $\omega\wedge\xi_{kx}=0$.
Set $\bar\xi_x=(\xi_{1x},\dots,\xi_{mx})$.
It suffices to prove that $\Xi:=\{\bar\xi_x:x\in\ker\theta\}$ spans a finite dimensional $\mathbb C$-vector subspace of
$\bigoplus_{k=1}^m H^0\big(X,\Omega^{(1,m)}_{k,\cl,\mer}\big)$.
Note that as $\Omega^{(1,m)}$ is an internal direct sum of the $\Omega^{(1,m)}_k$s, we can view each $\bar\xi_x$ as an element of the finite dimensional $\mathbb C(X)$-vector space $H^0\big(X,\Omega^{(1,m)}_{\mer}\big)$.
So, using the same general facts about exterior algebra as in the proof of Theorem~\ref{ghj}, and letting $\ell=\dim_{\mathbb C(X)}\span_{\mathbb C(X)}\Xi$, we reduce to proving that
$\span_{\mathbb C}\{\bar\xi_{x_1}\wedge\dots\wedge\bar\xi_{x_\ell}:x_1,\dots,x_\ell\in\ker\theta\}$
is of dimension one, where the wedge product is taken in the sense of the $\mathbb C(X)$-vector space $H^0\big(X,\Omega^{(1,m)}_{\mer}\big)$.

Fix $a_1,\dots,a_\ell\in\ker\theta$ such that $(\bar\xi_{a_1},\dots,\bar\xi_{a_\ell})$ is a $\mathbb C(X)$-basis for $\span_{\mathbb C(X)}\Xi$.
Fix another $x_1,\dots,x_\ell\in\ker\theta$, and write $\bar\xi_{x_i}=\sum_{j=1}^\ell g_{ij}\bar\xi_{a_j}$ where the $g_{ij}\in\mathbb C(X)$.
Then for each fixed $k=1,\dots,m$ we have $\xi_{kx_i}=\sum_{j=1}^\ell g_{ij}\xi_{ka_j}$ too.
But this implies that
$\xi_{kx_1}\wedge\dots\wedge\xi_{kx_\ell}=f\xi_{ka_1}\wedge\dots\wedge\xi_{ka_\ell}$
where $f\in\mathbb C(X)$ depends only on the $g_{ij}$, and not on $k$.
So $\bar\xi_{x_1}\wedge\dots\wedge\bar\xi_{x_\ell}=f\bar\xi_{a_1}\wedge\dots\wedge\bar\xi_{a_\ell}$.
We are therefore done if we can show that $f\in\mathbb C$.

Fixing $k$, consider again the fact that $\xi_{kx_1}\wedge\dots\wedge\xi_{kx_\ell}=f\xi_{ka_1}\wedge\dots\wedge\xi_{ka_\ell}$.
We are working now with wedge products of forms in $\Omega^{(1,m)}_{k,\cl,\mer}$ which is an isomorphic copy of $\Omega^1_{\cl,\mer}$.
So the computation with closed meromorphic $1$-forms at the end of the proof of Theorem~\ref{ghj} shows that $d_kf\wedge\xi_{ka_1}\wedge\dots\wedge\xi_{ka_\ell}=0$.
Since $\omega\wedge\xi_{ka_i}=0$ for all $i$ by choice of representative, we get as before that $\omega\wedge d_kf=0$.
That this is true for all $k$ means that if $f$ were nonconstant then it would be a meromorphic first integral of $\omega$.
By assumption therefore, $f$ is a constant, as desired.
\end{proof}

\bigskip
\section{Hypersurfaces on $D$-varieties of type $(m,r)$}
\label{section-dvar}

\noindent
In~\cite{hrushovski-jouanolou} Hrushovski uses his generalization of Jouanolou's theorem to prove a finiteness theorem about codimension one differential-algebraic subvarieties.
We want to extend this theorem to the partial context, and in this section we first consider the {\em a priori} special case of $D$-varieties to which Hrushovski's arguments extend.

Fix $m\geq 1$ throughout.

By a $D$-variety we will mean something rather more general than usual:

\begin{definition}
Suppose $F$ is a field of characteristic zero.
An {\em algebraic $D$-variety of type $(m,r)$ over $F$} is an irreducible affine algebraic variety $V$ over~$F$ equipped with an irreducible closed subvariety $S\subseteq T^mV$ over $F$, such that $S_a$ is an $r$-dimensional affine subspace of $(T_aV)^m$ for all $a\in V$.

A {\em $D$-subvariety} is a closed irreducible subvariety $Y\subseteq V$ such that $S\upharpoonright_Y\subseteq T^mY$.

A rational function $f\in F(V)$ is a {\em $D$-constant} of $(V,S)$ if for general $a\in V$, $(df_a)^m:(T_aV)^m\to F^m$ vanishes on $S_a$.
\end{definition}

If $m=1$ and $r=0$ then $S$ is a section to the tangent bundle, and we recover what is usually called a ``$D$-variety'' in the literature.
Moreover, in that case, $S$ determines an $F$-linear derivation $\delta$ on the co-ordinate ring $F[V]$ which extends to $F(V)$, and a $D$-constant is simply a $\delta$-constant of that differential field.

\begin{theorem}
\label{ghjD}
Suppose $F$ is an algebraically closed field of characteristic zero and $(V,S)$ is a $D$-variety of type $(m,r)$ over $F$ with no $D$-constants in $F(V)\setminus F$.
Then $(V,S)$ has only finitely many codimension one $D$-subvarieties over $F$.
\end{theorem}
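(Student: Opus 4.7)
The plan is to associate to the $D$-variety $(V,S)$ an $m$-fold holomorphic foliation on a smooth compactification of $V$, so that codimension one $D$-subvarieties become solutions to the foliation and $D$-constants become meromorphic first integrals; Theorem~\ref{partial-ghj} then finishes the argument.

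By a standard reduction (descending $(V,S)$ to a finitely generated subfield of $F$, embedding into $\CC$, and using that $D$-constants are stable under base change along an extension of algebraically closed fields) we may assume $F=\CC$. Passing to a smooth projective compactification $X$ of $V$ via resolution of singularities, we next build the foliation. Locally on $V$ choose sections $\sigma_0,\tau_1,\dots,\tau_r$ of $S\to V$, regarded as sections of $T^mV$, with $S_a=\sigma_0(a)+\span(\tau_1(a),\dots,\tau_r(a))$. Let $n:=\dim V$ and let $\Phi$ be a rational generator of the line bundle $\Omega^{(mn,m)}$ (the top exterior power of $\Omega^{(1,m)}$). Define
$$\omega:=i_{\sigma_0}\,i_{\tau_1}\cdots i_{\tau_r}\,\Phi\in\Omega^{(mn-r-1,\,m)}.$$
Since $\ker\omega_a$ at a generic $a$ is the linear span of $\sigma_0(a),\tau_1(a),\dots,\tau_r(a)$ in $(T_aV)^m$, which is canonically the linear span of $S_a$, any other local choice of sections rescales $\omega$ by a nonvanishing factor. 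Extending $\omega$ rationally to $X$ and absorbing its poles and zeros into a line bundle $\mathcal L$, I obtain a global holomorphic $m$-fold foliation $\omega\in H^0\bigl(X,\,\Omega^{(mn-r-1,m)}\otimes\mathcal L\bigr)$ of codimension $p=mn-r-1$.

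The matching of conditions is then essentially tautological. A hypersurface $Y=\{f=0\}$ in $V$ is a solution of $\omega=0$ iff $(d_kf)_a$ vanishes on $\ker\omega_a$ for all $k$ and all $a\in Y$, iff $df_a(\sigma_j^k(a))=0$ for all $j\in\{0,\dots,r\}$, $k\in\{1,\dots,m\}$ and all $a\in Y$, where $\sigma_j=(\sigma_j^1,\dots,\sigma_j^m)$; this last condition says each vector field $\sigma_j^k$ is tangent to $Y$, which is precisely $S|_Y\subseteq T^mY$, i.e., $Y$ is a codimension one $D$-subvariety. The same computation, carried out at all points of $V$ rather than just on $Y$, identifies meromorphic first integrals $f\in\CC(X)\setminus\CC$ of $\omega$ with non-constant $D$-constants of $\CC(V)$, since the vanishing of $(df_a)^m$ on the affine subspace $S_a$ is equivalent to its vanishing on the linear span $\ker\omega_a$. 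Under the hypothesis no such $f$ exists, so Theorem~\ref{partial-ghj} gives finitely many solutions, hence finitely many codimension one $D$-subvarieties of $V$.

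The main technical obstacle is the globalization of $\omega$ on $X$: the sections $\sigma_j$ are only local on $V$, the rational form $\Phi$ typically has poles and zeros, and so does $\omega$ along $X\setminus V$ as well as along the locus where $\sigma_0,\tau_1,\dots,\tau_r$ become linearly dependent. All of this must be absorbed into a single line bundle twist on $X$, exactly as in Hrushovski's argument for the case $m=1$ in~\cite{hrushovski-jouanolou}. The descent from $\CC$ back to $F$ is secondary and essentially routine.
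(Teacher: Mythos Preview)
Your overall strategy matches the paper's, with two pleasant variations: you build $\omega$ by contracting a rational top form against spanning sections of $S$ rather than invoking Hilbert~90 at the generic point, and you verify the correspondences (between $D$-subvarieties and solutions, and between $D$-constants and first integrals) by a direct tangency computation with the vector fields $\sigma_j^k$ rather than by passing to a saturated model of fields with $m$ free derivations and using the geometric axiom. Both shortcuts are valid and arguably more transparent.

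There is, however, a genuine gap: the boundary case $r = m\dim V - 1$. Here $p = mn - r - 1 = 0$, so your $\omega$ is a $0$-form and Theorem~\ref{partial-ghj} is not applicable. For $m \geq 2$ this case happens to be vacuous (a codimension one $D$-subvariety would need $\dim S_a = mn - 1 \leq m(n-1) = \dim(T_aY)^m$, forcing $m \leq 1$), but for $m = 1$ it is not, and nontrivial examples exist. The paper handles it by replacing $(V,S)$ with the product $(V \times \mathbb{A}^1,\, S \times S_\gamma)$, where $S_\gamma$ is the section $a \mapsto (\gamma a, \dots, \gamma a)$; this raises $\dim V$ by one while keeping $r$ fixed, so the main argument applies to the product and distinct codimension one $D$-subvarieties of $(V,S)$ give distinct ones of the product. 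The delicate step is choosing $\gamma \in \mathbb{C}$ so that the product still has no nonconstant $D$-constant, which requires a result of Kolchin (Fact~\ref{ostrowski}) bounding the rank of the group of exponents $\gamma$ for which $\delta x = \gamma x$ has a solution in a fixed finite-transcendence-degree extension of the constants. You need to add this argument, or at least flag the case as requiring separate treatment.
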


\begin{proof}
We basically need to verify that the arguments in~\cite[Proposition~2.3]{hrushovski-jouanolou} extend to this partial setting, though we give a self-contained exposition.

First note that it suffices to prove the theorem for $F=\mathbb C$.
Indeed, suppose $(V,S)$ is a counterexample to the theorem over $F$.
Let $F_0$ be a countable algebraically closed subfield of $F$ over which $(V,S)$ is defined, and over which $(V,S)$ has infinitely many codimension one $D$-subvarieties.
We may embed $F_0$ in $\mathbb C$.
As $V$ has no $D$-constants in $F_0(V)\setminus F_0$, and as $F_0$ is an algebraically closed subfield of $\mathbb C$, $(V,S)$ has no $D$-constants in $\mathbb C(V)\setminus \mathbb C$ either.
So $(V,S)$ is a counterexample over $\mathbb C$.

Let us consider the case when $r<m\dim V-1$.

Let $e\in V$ be generic.
Since $S_e$ is an affine subspace of $(T_eV)^m$, it generates an $(r+1)$-dimensional linear subspace of $(T_eV)^m$ over $\mathbb C(e)$, say $W_e$.
By assumption, $p:=\dim_{\mathbb C(e)}\big((T_eV)^m/W_e\big)>0$.
We can consider the $1$-dimensional space of $p$-forms on $(T_eV)^m/W_e$ as an algebraic variety $P$ over $\mathbb C(e)$.
It is an algebraic principal homogeneous space for $\mathbb G_a$ over $\mathbb C(e)$, and hence corresponds to a point in the Galois cohomological group $H^1(\mathcal G,\mathbb G_a)$ where $\mathcal G$ is the absolute Galois group of $\mathbb C(e)$.
The additive version of Hilbert's 90th tells us that $H^1(\mathcal G,\mathbb G_a)$ is trivial, so that $P$ is isomorphic to $\mathbb G_a$ over $\mathbb C(e)$.
So $P$ has a nonzero $\mathbb C(e)$-rational point, which we will denote by $\overline\beta$.
This will necessarily be of the form $\overline{\alpha_1}\wedge\dots\wedge\overline{\alpha_p}$, with $\bigcap_{\ell=1}^p\ker(\overline{\alpha_\ell})=0$.
Pulling back, we get a $p$-form $\beta=\alpha_1\wedge\dots\wedge\alpha_p$ on $(T_eV)^m$ with $\bigcap_{\ell=1}^p\ker(\alpha_\ell)=W_e$.
We thus obtain, over a nonempty Zariski open subset $U_0$ of the nonsingular locus of $V$, a nonzero regular section $\omega_0=\alpha^0_1\wedge\dots\wedge\alpha^0_p\in \Omega^{(p,m)}(U_0)$ such that $(\omega_0)_e=\beta$.

Let $X$ be a smooth projective closure of $U_0$.
So $\omega_0$ is rational on $X$, and by considering the line bundle corresponding to the polar divisor, $\omega_0$ extends to some $\omega\in H^0(X,\Omega^{(p,m)}\otimes\mathcal L)$, an $m$-fold regular foliation of codimension $p$ on $X$.
It is to this $\omega$ that we intend to apply Theorem~\ref{partial-ghj}.

We claim that $\omega$ admits no meromorphic (so rational) first integral.
Indeed, if $f\in\mathbb C(X)\setminus\mathbb C$ were such then $\omega_e\wedge (d_kf)_e=0$ which implies that $(d_kf)_e$ vanishes on $W_e\subseteq (T_eV)^m$, for all $k=1,\dots,m$.
But recall that $(d_kf)_e(v_1,\dots,v_m)=df_e(v_k)$ by definition.
So we have that $(df_e)^m$ vanishes on $W_e$ and hence on $S_e$.
Hence, for some nonempty Zariski open subset $U\subseteq V$, $(df_a)^m$ vanishes on $S_a$ for all $a\in U$.
That is, $f$ is a $D$-constant of $(V,S)$ that is not in $\mathbb C$, contradicting the assumption of the theorem.

By Theorem~\ref{partial-ghj}, it follows that $\omega=0$ has only finitely many solutions on $X$.
We now show that this will force there to be only finitely many codimension one $D$-subvarieties of $(V,S)$.

We work inside a sufficiently saturated model $(K,0,1,+,\times,,\delta_1,\dots,\delta_m)$ of the model companion of the theory of fields equipped with $m$ (not necessarily commuting) $\mathbb C$-linear derivations.
The existence and basic properties of this model companion are, we think, general knowledge. In any case, it is a special case of the theory of fields with free operators developed in~\cite{jetC}.
We let
$$\mathcal C:=\{x\in K:\delta_kx=0, k=1,\dots,m\}$$
denote the total constants of $K$.
The main reason for working in $K$ is that if $(V',S')$ is any $D$-variety over $\mathbb C$ then there is $a\in V'(K)$ such that $(a,\delta_1a,\dots,\delta_ma)$ is generic in $S'_a$ over $\mathbb C$; see for example~\cite[Theorem~4.6(III)]{jetC}, this is the so-called geometric axiom.
In particular, given a rational function $f\in \mathbb C(V')$, $f$ is a $D$-constant if and only if $f(a)\in\mathcal C$.
Indeed, this follows from the fact that $\delta_k\big(f(a)\big)=df_a(\delta_ka)$, and the genericity of $(\delta_1a,\dots,\delta_ma)$ in $S'_a$.

Suppose $Y$ is a codimension one $D$-subvariety of $(V,S)$ that intersects $U_0$.
Let $\overline Y$ be the Zariski closure of $Y\cap U_0$ in $X$.
We claim that $\overline Y$ is a solution to $\omega=0$.
That is, for a Zariski open cover $(U_i)_{i\in I}$ of $X$ with $\overline Y$ given in $U_i$ by the vanishing of a regular function $f_i$, we will show that $(\omega\wedge df_i)\upharpoonright_{\overline Y\cap U_i}=0$.

Since $Y$ is a $D$-subvariety there is $a\in Y(K)$ with $(a,\delta_1a,\dots,\delta_ma)$ a generic point of $S\upharpoonright_Y$ over $\mathbb C$.
In particular $a$ is generic in $\overline Y$, and so is contained in $U_0$ as well as each chart $U_i$.
It follows that $f_i(a)=0$ and so
$$(d_kf_i)_a(\delta_1a,\dots,\delta_ma)=(df_i)_a(\delta_k a)=\delta_k\big(f_i(a)\big)=0$$
for all $k=1,\dots,m$.
But $(\delta_1a,\dots,\delta_ma)$ is generic in $S_a$ over $\mathbb C(a)$, so we get that $(d_kf_i)_a$ vanishes on all of $S_a$, and as it is linear it must vanish on the subspace generated by $S_a$.
Note that as the $\alpha^0_\ell$ are regular $1$-forms on $U_0$ whose common kernel at the generic point $e$ is spanned by $S_e$, after shrinking $U_0$ further, we may assume that for all $x\in U_0$, $W_x:=\bigcap_{\ell=1}^p\ker(\alpha^0_\ell)_x$ is the $\mathbb C$-subspace of $(T_xV)^m$ spanned by $S_x$.
So $(d_kf_i)_a$ vanishes on all of $W_a$.
That is, $\omega_a\wedge(d_kf_i)_a=0$.
As $a$ is generic in $\overline Y\cap U_i$, we get $(\omega\wedge df_i)\upharpoonright_{\overline Y\cap U_i}=0$, as desired.

So we only get finitely many codimension one $D$-subvarieties of $(V,S)$ that intersect $U_0$.
As $V\setminus U_0$ is Zariski closed of codimension at least $1$, it can only contain at most finitely many codimension one subvarieties of $Y$.

It remains to consider the possibility that $r=m\dim V-1$.
(Note that when $r= m\dim V$ the theorem is vacuously true.)
For each $\gamma\in \mathbb C$, let $(\mathbb A^1,S_\gamma)$ denote the $D$-variety of type~$(m,0)$ where $S_{\gamma}$ is the graph of the section to the $m$-fold tangent bundle given by $a\mapsto (\gamma a,\dots,\gamma a)$.
Then $(V\times\mathbb A^1, S\times S_\gamma)$ is a $D$-variety of type $(m,r)$, and now $r<m\dim(V\times\mathbb A^1)-1$ so that the theorem is true of $(V\times\mathbb A^1, S\times S_\gamma)$.
Moreover, distinct codimension one $D$-subvarieties of $(V,S)$ give rise to distinct codimension one $D$-subvarieties of $(V\times\mathbb A^1, S\times S_\gamma)$ simply by taking the cartesian product with $(\mathbb A^1,S_\gamma)$.
So it suffices to show that $\gamma$ can be chosen in such a way that $(V\times\mathbb A^1, S\times S_\gamma)$ still has no nonconstant $D$-constants.

Suppose $(V\times\mathbb A^1, S\times S_\gamma)$ has a $D$-constant $g\in\mathbb C(V\times\mathbb A^1)\setminus\mathbb C$, and let us see what this implies about $\gamma$.
Using the geometric axiom, choose $(e,t)\in (V\times\mathbb A^1)(K)$ such that $(e,\delta_1e,\dots,\delta_me,t,\delta_1t,\dots,\delta_mt)$ is a generic point of $(S\times S_{\gamma})(K)$ over $\mathbb C$.
We claim that  $g(e,t)\not\in\mathbb C(e)^{\alg}$.
Otherwise, as $t$ is generic in $\mathbb A^1$ over $\mathbb C(e)$, it must be that $g(e,t)\in\mathbb C(e)$.
So $g(e,t)=h(e)$, and as $e$ is generic in $V$ over $\mathbb C$, we have that $h$ is a nonconstant $D$-constant of $(V,S)$, contradicting our assumption that such do not exist.
So $g(e,t)\notin\mathbb C(e)^{\alg}$.
It follows by Steinitz exchange that $t\in\mathbb C\big(e,g(e,t)\big)^{\operatorname{alg}}$. Since $g$ is a $D$-constant, what we have shown is that $t\in\mathcal C(e)^{\alg}$.

So, to show that $(V\times\mathbb A^1, S\times S_\gamma)$ has no nonconstant $D$-constants, it remains to verify that for some choice of $\gamma\in\mathbb C$, the set
$$E_\gamma:=\{x\in K:\delta_kx=\gamma x, k=1,\dots,m\}$$
has no point that is algebraic over $\mathcal C(e)$.
In fact, it follows from Fact~\ref{ostrowski} below, which as Hrushovski points out in~\cite{hrushovski-jouanolou} is a result of Kolchin's, that we can choose $\gamma\in \mathbb C$ such that $\delta_1x=\gamma x$ has no solution that is algebraic over $e$ together with the $\delta_1$-constants of $K$, and this is enough.\end{proof}

\begin{fact}[Kolchin~\cite{kolchin68}]
\label{ostrowski}
Suppose $(K,\delta)$ is a differential field with field of constants~$C$.
Let $F\subseteq K$ be a field extension of $C$ of finite transcendence degree.
Then the additive subgroup
$$\Gamma:=\{\gamma\in C:\delta x=\gamma x\text{ has a nonzero solution in F}\}$$
is of finite rank.
\end{fact}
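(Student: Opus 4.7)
The plan is to prove what is essentially the Kolchin--Ostrowski theorem: if $\gamma_1,\ldots,\gamma_n \in \Gamma$ are $\mathbb{Q}$-linearly independent in $C$, and $f_i \in F^\times$ satisfies $\delta f_i = \gamma_i f_i$ for each $i$, then $f_1,\ldots,f_n$ are algebraically independent over $C$. Granting this, one immediately obtains $n \leq \trdeg_C(F)$; since $\trdeg_C(F)$ is finite by hypothesis, the rank of $\Gamma$ (the supremum of sizes of $\mathbb{Q}$-linearly independent subsets of $\Gamma$) is also finite.

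For the algebraic independence, I would argue by contradiction. Let $P = \sum_\alpha c_\alpha X^\alpha \in C[X_1,\ldots,X_n] \setminus \{0\}$ be a relation $P(f_1,\ldots,f_n)=0$ with minimum number of nonzero monomials. Since each $f_i$ is nonzero, no single-term relation can hold, so this minimum is at least $2$. I would then differentiate: using $\delta c_\alpha = 0$ (as $c_\alpha \in C$) together with the product-rule computation $\delta(f_1^{\alpha_1}\cdots f_n^{\alpha_n}) = \bigl(\sum_i \alpha_i \gamma_i\bigr)f^\alpha$, the identity $\delta(P(f))=0$ becomes
$$\sum_\alpha c_\alpha \Bigl(\sum_i \alpha_i \gamma_i\Bigr) f^\alpha = 0.$$
Fixing some $\alpha^0$ in the support of $P$ and subtracting $\bigl(\sum_i \alpha_i^0 \gamma_i\bigr) \cdot P(f)=0$ yields
$$\sum_{\alpha \neq \alpha^0} c_\alpha \Bigl(\sum_i (\alpha_i - \alpha_i^0) \gamma_i\Bigr) f^\alpha = 0,$$
a new algebraic relation whose coefficients still lie in $C$, but from which the $\alpha^0$ monomial has been eliminated.

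The crux is then to invoke the hypothesis of linear independence: for each $\alpha \neq \alpha^0$ in the support of $P$ the integer vector $(\alpha_i - \alpha_i^0)_i$ is nonzero, so by the $\mathbb{Q}$-linear independence of the $\gamma_i$ the scalar $\sum_i (\alpha_i - \alpha_i^0)\gamma_i$ is nonzero in $C$; combined with $c_\alpha \neq 0$, this forces the new coefficient to be nonzero. Hence the new relation is nontrivial and has strictly smaller support than $P$, contradicting minimality. I do not anticipate a serious obstacle: the entire argument boils down to the single clever manoeuvre of differentiating a minimal relation and eliminating one term, with the $\mathbb{Q}$-linear independence of the $\gamma_i$ ensuring that no other term accidentally vanishes.
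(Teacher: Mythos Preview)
Your proof is correct. It is essentially the contrapositive of the paper's argument, but more self-contained: the paper deduces from algebraic dependence of the $a_i$ a multiplicative dependence $a_1^{e_1}\cdots a_n^{e_n}\in C$ by \emph{citing} Kolchin's multiplicative analogue of Ostrowski's theorem, and then takes the logarithmic derivative to get $\sum e_i\gamma_i=0$. You instead prove that $\mathbb Q$-linear independence of the $\gamma_i$ forces algebraic independence of the $f_i$ directly, via the minimal-monomial differentiation trick. That trick is in fact the standard proof of the very lemma the paper cites, so the two arguments are the same under the hood; yours simply unpacks the black box and avoids the external reference.
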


\begin{proof}
Let $n>\trdeg(F/C)$, and suppose $\gamma_1,\dots,\gamma_n\in \Gamma$.
Then there are nonzero $a_1,\dots, a_n\in F$ such that $\frac{\delta(a_i)}{a_i}=\gamma_i\in C$, for all $i=1,\dots, n$.
By the choice of $n$ we have $a_1,\dots, a_n$ are algebraically dependent over $C$.
By what Kolchin calls the multiplicative analogue of Ostrowski's theorem in~\cite[page 1156]{kolchin68}, there are integers $e_1,\dots, e_n$, not all zero, such that $a_1^{e_1}a_2^{e_2}\cdots a_n^{e_n}\in C$.
Applying the logarithmic derivative $\frac{\delta x}{x}$ to this we get that $e_1\gamma_1+\cdots+e_n\gamma_n=0$.
\end{proof}

The following corollary appears as Theorem~6.1 of~\cite{pdme} but with an entirely different, longer and more algebraic, proof.
It was the key step in the proof of a weak (but optimal)  Dixmier-Moeglin equivalence for Poisson algebras.

\begin{corollary}
\label{pdme-version}
Let $R$ be a finitely generated integral $\mathbb C$-algebra equipped with $\mathbb C$-linear derivations $\delta_1,\dots,\delta_m$.
If there are infinitely many	height one prime differential ideals then there exists $f \in\Frac(R)\setminus\mathbb C$ with $\delta_i(f)=0$ for all $i=1,...,m$.
\end{corollary}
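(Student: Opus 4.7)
The plan is to translate the algebraic data into a $D$-variety and apply the contrapositive of Theorem~\ref{ghjD}. First I would set $V=\spec(R)$, which is an affine irreducible algebraic variety over $\CC$ with coordinate ring $R$ (irreducibility coming from the integrality of $R$). The $m$ $\CC$-linear derivations $\delta_1,\ldots,\delta_m$ on $R$ correspond bijectively to $m$ regular sections $s_i\colon V\to TV$ of the tangent bundle, and together they determine a regular section $s:=(s_1,\ldots,s_m)\colon V\to T^mV$ whose image $S$ is a closed irreducible subvariety of $T^mV$ with $S_a$ a single point of $(T_aV)^m$. Thus $(V,S)$ is an algebraic $D$-variety of type $(m,0)$ over $\CC$.

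Next I would verify two routine dictionary entries. A height one prime ideal $\mathfrak{p}\subseteq R$ is differential, i.e.\ stable under each $\delta_i$, precisely when on the codimension one subvariety $Y=V(\mathfrak{p})$ each $s_i$ restricts to a section $Y\to TY$; equivalently, $S\upharpoonright_Y\subseteq T^mY$, making $Y$ a codimension one $D$-subvariety of $(V,S)$ over $\CC$. So the hypothesis of infinitely many height one prime differential ideals translates into $(V,S)$ having infinitely many codimension one $D$-subvarieties over $\CC$. Similarly, $f\in\Frac(R)=\CC(V)$ is a $D$-constant of $(V,S)$ iff, for general $a\in V$, $(df_a)^m$ vanishes on $S_a$; since $S_a=\{(s_1(a),\ldots,s_m(a))\}$ and $df_a(s_i(a))=\delta_i(f)(a)$, this is exactly the condition that $\delta_i(f)=0$ for all $i$.

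With these identifications in place, the corollary is immediate from the contrapositive of Theorem~\ref{ghjD} applied over $F=\CC$: infinitely many codimension one $D$-subvarieties over $\CC$ force the existence of a $D$-constant $f\in\CC(V)\setminus\CC=\Frac(R)\setminus\CC$, which is precisely an element annihilated by every $\delta_i$. There is no substantial obstacle, as all the heavy lifting has already been done in Theorem~\ref{ghjD}; the only points to watch are that $\CC$ is algebraically closed (needed to invoke the theorem) and that the $\delta_i$ are not assumed to commute, which is harmless because the $D$-variety framework of type $(m,0)$ makes no such assumption.
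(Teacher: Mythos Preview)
Your proposal is correct and is exactly the paper's approach: the paper's proof simply asserts that the corollary is the algebraic formulation of Theorem~\ref{ghjD} in the case $r=0$, with $V$ the affine variety with coordinate ring $R$ and $S$ the image of the section to $T^mV\to V$ induced by the $\delta_i$. You have spelled out the dictionary (height one prime differential ideals $\leftrightarrow$ codimension one $D$-subvarieties, $D$-constants $\leftrightarrow$ elements killed by all $\delta_i$) in more detail than the paper does, but the argument is identical.
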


\begin{proof}
This is precisely the algebraic formulation of Theorem~\ref{ghjD} when $r=0$, with $V$ the affine algebraic variety whose co-ordinate ring is $R$ and $S$ the image of the regular section to $T^mV\to V$ induced by the derivations $\delta_1,\dots,\delta_m$ on $R$.
\end{proof}

\bigskip
\section{Hypersurfaces on differential-algebraic varieties}
\label{section-dav}
\noindent
We now turn our attention to partial differential-algebraic varieties in the context of $m$ commuting derivations, $\Delta=\{\delta_1,\dots,\delta_m\}$.
These can be viewed indirectly as $D$-varieties.
However the passage from $\Delta$-varieties to $D$-varieties involves taking a sufficiently long prolongation, and so to apply Theorem~\ref{ghjD} to this context will require proving there is a bound on how far one has to go to capture all the codimension one $\Delta$-subvarieties.
This is done in $\S$\ref{subsection-cdb}.
We then prove our main result: $\Delta$-varieties over the constants with no nonconstant $\Delta$-constant $\Delta$-rational functions have only finitely many codimension one $\Delta$-subvarieties (Theorem~\ref{ghj-dav}).
Finally, we deduce a version that makes no assumption on the $\Delta$-constant $\Delta$-rational functions and extends to arbitrary finitely generated $\Delta$-fields of definition (Corollary~\ref{ghj-dav-general}).

\medskip
\subsection{A review of differential-algebraic geometry}
\label{subsection-rdag}
This is meant primarily to fix notation.
Ours will be more or less standard, so the reader familiar with the subject can safely skip to the next section.
For further details on these preliminaries we suggest Chapters~I and IV of~\cite{kolchin73}.

Let $\Delta=\{\delta_1,\dots,\delta_m\}$ be the commuting {\em derivations}, and
$$\Theta:=\{\delta_m^{e_m}\cdots \delta_1^{e_1}:e_1,\dots,e_m\in\mathbb N\}$$
the corresponding {\em derivatives}.
The {\em order} of $\delta_m^{e_m}\cdots \delta_1^{e_1}$ is $e_1+\cdots+e_m$.
For $u=(u_1,\dots,u_n)$ a tuple of indeterminates, the set of {\em algebraic indeterminates} is
$\Theta u:=\{\theta u_i:\, 1\leq i\leq n, \, \theta\in \Theta\}$.
By the {\em order} of an algebraic indeterminate $\theta u_i$ we mean the order of $\theta$.
There is a canonical ranking on $\Theta u$ where
$\delta_m^{e_m}\cdots \delta_1^{e_1}u_i< \delta_m^{r_m}\cdots \delta_1^{r_1}u_j$ means that $\left(\sum e_k,i,e_m,\dots,e_1\right)<\left(\sum r_k,j,r_m,\dots,r_1\right)$ in the lexicographic order.

Suppose $(F,\Delta)$ is a partial differential field of characteristic zero.
We denote by $F\{u\}$ the $\Delta$-ring of $\Delta$-polynomials over $F$, and by $F\langle u\rangle$ its fraction field, the $\Delta$-field of $\Delta$-rational functions.
So the underlying $F$-algebra structure on $F\{u\}$ is that of the polynomial ring $F[\Theta u]$.
Let $f\in F\{u\}\setminus F$.
The \emph{leader} of~$f$, $u_f$, is the highest ranking algebraic indeterminate that appears in $f$.
The \emph{order} of~$f$ is the order of its leader.
The \emph{leading degree} of~$f$, $d_f$, is the degree of $u_f$ in $f$. 
The \emph{rank} of~$f$ is the pair $(u_f,d_f)$, and the set of ranks is ordered lexicographically.
By convention, an element of $F$ has lower rank than all the elements of $F\{x\}\setminus F$.
The \emph{separant} of $f$, $S_f$, is the formal partial derivative of $f$ with respect to $u_f$.
Note that $S_f$ has lower rank than $f$.

The ranking on $\Delta$-polynomials is extended to finite sets of $\Delta$-polynomials as follows:
Writing finite sets of differential polynomial in nondecreasing order by rank, 
define $\{g_1,\dots,g_r\}<\{f_1,\dots,f_s\}$ to mean that either there is $i\leq r,s$ such that $\rank(g_j)=\rank(f_j)$ for $j<i$ and $\rank(g_i)<\rank(f_i)$, or $r>s$ and $\rank(g_j)=\rank(f_j)$ for $j\leq s$.

Suppose $\Lambda$ is a subset of $ F\{x\}\setminus F$.
the set $\Lambda$ is said to be \emph{autoreduced} if for each $f\neq g$ in $A$, no proper derivative of $u_f$ appears in $g$, and if $u_f$ appears at all in $g$ then it does so with strictly smaller degree.
Autoreduced sets are finite.

A {\em $\Delta$-ideal} of $F\{u\}$ is an ideal that is preserved by $\delta_1,\dots,\delta_m$.
If $I\subset F\{u\}$ is a prime $\Delta$-ideal then a {\em characteristic set} $\Lambda$ for $I$ is a minimal autoreduced subset of~$I$.
Prime $\Delta$-ideals are determined by their characteristic sets.

We will be concerned {\em $\Delta$-varieties}, namely sets of solutions to systems of $\Delta$-polynomials.
While this can be done at various levels of generality and abstraction, we will work essentially set-theoretically, fixing a sufficiently saturated ambient differentially closed field $(K,\Delta)$ and identify $\Delta$-algebraic varieties with their $K$-points.
That is we are considering the Kolchin topology on various cartesian powers of $K$.
This is a noetherian topology.

If $X\subseteq K^n$ is a $\Delta$-variety defined over $F$ then
$$I_\Delta(X):=\big\{f\in F\{u\} : f(x)=0\text{ for all }x\in X\big\}$$
is the {\em $\Delta$-ideal of $X$}.
When $X$ is $F$-irreducible it is a prime $\Delta$-ideal, and we denote by $F\langle X\rangle$ the {\em $\Delta$-rational function field} of $X$, i.e., the fraction field of $F\{u\}/I_\Delta(X)$ with the (unique) extension of the $\Delta$-field structure.

Given $c=(c_1,\dots,c_n)\in K^n$, by the {\em Kolchin locus of $c$ over $F$} we mean the smallest Kolchin closed dubset of $K^n$ over $F$ that contains $c$.
We will denote this by $\kloc(c/F)$, and the usual Zariski locus by $\loc(c/F)$.
If $X\subseteq K^n$ is a $\Delta$-variety defined over $F$ then by a {\em generic } point in $X$ over $F$ we mean $c\in X$ such that $X=\kloc(c/F)$.
Note that $F\langle X\rangle=F\langle c\rangle$, that is, the $\Delta$-rational function field over $F$ is generated over $F$ as a $\Delta$-field by a generic point.

\medskip

{\em For the remainder of this section we work in a fixed sufficiently saturated differentially closed field $(K,\Delta)$ of characteristic zero, with field of $\Delta$-constants $\mathcal C$.
We also fix a small $\Delta$-subfield $F\subseteq K$ which will serve as our field of defintion.}

\medskip
\subsection{Dimension and transcendence index sets}
\label{subsection-c1dav}
An important technique in the study of $\Delta$-varieties is to view them as proalgebraic varieties in the following sense.
For each $t<\omega$, and $c=(c_1,\dots,c_n)\in K^n$, let
$$\nabla_tc:=(\theta c_i: i=1,\dots,n, \ \theta\in\Theta\text{ of order}\leq t)$$
indexed with respect to the canonical ordering on $\Theta u$.
Then $\kloc(c/F)$ is determined by $\big(\loc(\nabla_tc/F):t<\omega\big)$, which is a directed system of algebraic varieties under the natural co-ordinate projections $\loc(\nabla_{t+1}c/F)\to\loc(\nabla_tc/F)$.

Suppose $X\subseteq K^n$ is an $F$-irreducible $\Delta$-variety, and $c\in X$ is generic over $F$.
By the {\em dimension function of $X$} we mean the sequence of natural numbers
$$\big(\trdeg_FF(\nabla_tc):t<\omega\big).$$
In working with these dimensions Kolchin's description of an explicit transcendence bases for $F(\nabla_tc)$ over $F$ will be very useful.
We first introduce some multi-index notation.

\begin{notation}
Regard
$\mathbb N^m \times \{1, \ldots , n\}$
as a partial order where
$$(r_1, \ldots , r_m , i) \leq (s_1, \ldots , s_m , j)$$
means that $i=j$ and $r_k \leq s_k$ for each $k=1,\dots,m$.
For $r=(r_1,\dots,r_m,i)\in\mathbb N^m \times \{1, \ldots , n\}$, $B\subset\mathbb N^m \times \{1, \ldots , n\}$, $x=(x_1,\dots,x_n)\in K^n$, and $t<\omega$, we set
\begin{itemize}
\item
$|r|:=r_1+\cdots+r_m$,
\item
$rx:=\delta_1^{r_1}\cdots\delta_m^{r_m}x_i$,
\item
$Bx:=(rx:r\in B)$,
viewed as a sequence of elements in $K$ indexed by $B$,
\item
$B_t:=\left\{(r_1,\dots,r_m,j)\in B:|r|\leq t\right\}$.
\end{itemize}
So in this notation if $r\geq s$ then $rx$ is a derivative of $sx$.
\end{notation}

The following fact is established in the proof of Theorem~6, Chapter II.12 of~\cite{kolchin73}.

\begin{fact}
\label{basis}
Suppose $X\subseteq K^n$ is an $F$-irreducible $\Delta$-variety, $c\in X$ is generic over $F$, and $\Lambda$ is a characteristic set for $I_\Delta(X)$.
Let $E$ denote the set of all points $(e_1,\dots,e_m,j)\in\mathbb N^m\times\{1,\dots,n\}$ such that $\delta_1^{e_1}\dots\delta_m^{_m}u_j$ is a leader of an element of~$\Lambda$, and $B$ the set of all points in $\mathbb N^m\times\{1,\dots,n\}$ that do not lie above any element of $E$.
Then, for all $t<\omega$, $B_tc$ is a transcendence basis for $F(\nabla_tc)$ over $F$.
\end{fact}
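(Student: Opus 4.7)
The plan is to split the statement into two halves: first, that each element of $\nabla_t c$ is algebraic over $F(B_t c)$; second, that $B_t c$ is itself algebraically independent over $F$. Both will hinge on the defining properties of characteristic sets of prime $\Delta$-ideals, and nothing more sophisticated than the usual reduction/pseudo-division machinery should be needed.

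For the algebraicity half, I would induct on the canonical rank of the algebraic indeterminates $\delta_1^{e_1}\cdots\delta_m^{e_m} u_j$ with $(e_1,\dots,e_m,j)$ at or above some element of $E$. If $(e_1,\dots,e_m,j)\in E$ itself, then it is the leader of some $f_i\in\Lambda$, and $f_i(c)=0$ exhibits the corresponding derivative of $c$ as algebraic over the evaluations at $c$ of strictly lower-ranked indeterminates. If instead $(e_1,\dots,e_m,j)$ lies strictly above a leader $(e'_1,\dots,e'_m,j)$ of some $f_i\in\Lambda$, then applying the appropriate $\theta'\in\Theta$ to $f_i$ produces a $\Delta$-polynomial in which $\delta_1^{e_1}\cdots\delta_m^{e_m}u_j$ appears linearly with coefficient equal to the separant $S_{f_i}$, the remaining terms being of strictly lower rank. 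Provided $S_{f_i}(c)\neq 0$---addressed below---this yields a nontrivial algebraic relation, and iterating the induction delivers the algebraicity of $F(\nabla_t c)$ over $F(B_t c)$ in finitely many steps.

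For the independence half, I would argue by contradiction. A nontrivial algebraic dependence over $F$ among the entries of $B_t c$ corresponds, via the identification of algebraic indeterminates with their indices in $\mathbb N^m\times\{1,\dots,n\}$, to a nonzero $g\in F\{u\}\cap I_\Delta(X)$ involving only algebraic indeterminates indexed by elements of $B$. Because no index in $B$ lies at or above any element of $E$, no leader of any element of $\Lambda$ and no proper derivative of such a leader occurs in $g$; in Kolchin's language, $g$ is \emph{reduced} with respect to $\Lambda$. The characterizing property of $\Lambda$ as a characteristic set of the prime $\Delta$-ideal $I_\Delta(X)$ is that no nonzero element of $I_\Delta(X)$ is reduced with respect to $\Lambda$---else pseudo-remaindering would produce a strictly smaller autoreduced subset of $I_\Delta(X)$, violating minimality. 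This forces $g=0$, a contradiction.

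The main obstacle I foresee is the separant bookkeeping in the algebraicity step: at each inductive stage one implicitly divides through by a separant to isolate the top derivative, and one must verify that no such separant vanishes at $c$. This follows from the same minimality principle: since $S_{f_i}$ has strictly lower rank than $f_i$, if $S_{f_i}(c)=0$ then $S_{f_i}$ would be a nonzero reduced element of $I_\Delta(X)$, contradicting $\Lambda$ being a characteristic set. With the separants controlled, the induction closes cleanly and Fact~\ref{basis} drops out.
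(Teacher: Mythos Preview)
Your argument is correct and is essentially the standard one; the paper itself does not prove this fact but simply cites the proof of Theorem~6 in Chapter~II.12 of Kolchin's \emph{Differential Algebra and Algebraic Groups}, where precisely this two-step argument (algebraicity via proper derivatives of characteristic-set elements, independence via ``no nonzero reduced element of the ideal'') is carried out.

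One small point of bookkeeping: your justification that $S_{f_i}$ is reduced with respect to $\Lambda$ is slightly underspecified. Having rank strictly lower than $f_i$ does not by itself ensure reducedness with respect to the \emph{other} elements $f_j$ of $\Lambda$. What makes it work is that $\Lambda$ is autoreduced: every algebraic indeterminate occurring in $S_{f_i}$ already occurs in $f_i$, so for $j\neq i$ no proper derivative of $u_{f_j}$ appears and the degree in $u_{f_j}$ is still below $d_{f_j}$; and with respect to $f_i$ itself the degree in $u_{f_i}$ has dropped by one. With that in place your separant step is clean and the induction closes as you describe.
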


We will therefore call the set $B\subseteq \mathbb N^m \times \{1, \ldots , n\}$ appearing in Fact~\ref{basis} a {\em transcendence index set} for $c$ over $F$.
Note that $B$ is an {\em initial} set; it is a subset of $\mathbb N^m \times \{1, \ldots , n\}$ that is closed downward in the partial ordering.

The following lemma points out that the transcendence basis found in Fact~\ref{basis} is actually a {\em linear} basis over $\Delta$-rational functions of lower order.

\begin{lemma}
\label{affine}
Let $t$ be strictly greater than the order of every element of $\Lambda$.
Then $F(\nabla_t c)\subseteq\operatorname{span}_{F(\nabla_{t-1} c)}(B_tc)$.
\end{lemma}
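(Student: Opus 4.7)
The plan is to read the conclusion as the assertion that every coordinate of $\nabla_tc$ lies in the $F(\nabla_{t-1}c)$-affine span of $B_tc$, and to prove this by induction on the rank of algebraic indeterminates at order $t$. Coordinates $\theta c_i$ with $|\theta|<t$ are already in $F(\nabla_{t-1}c)$, while those indexed by $(\theta,i)\in B_t$ lie in $B_tc$ by definition; only the order-$t$ coordinates $(\theta u_i)(c)$ with $(\theta,i)\notin B_t$ require work.

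For such a $(\theta,i)$, the defining property of $B$ provides some $(e,i)\in E$ with $e\leq\theta$ coordinatewise, where $u_f=\theta_eu_i$ is the leader of some $f\in\Lambda$. The hypothesis $t>\ord(f)$ forces the operator $\theta'$ defined by $\theta'\theta_e=\theta$ to have positive order, so $\theta'f\in I_\Delta(X)$ and $(\theta'f)(c)=0$. I would write $f=\sum_k a_k u_f^k$ with each $a_k$ a polynomial in algebraic indeterminates of rank $<u_f$ (hence of order $\leq|e|$), and expand $\theta'f$ by iterated Leibniz. The structural observation driving the proof is that every algebraic indeterminate of order exactly $t$ appearing in $\theta'f$ must appear linearly: in any Leibniz contribution $\theta_b(a_k)\,\theta_{b'}(u_f^k)$ with $b+b'=\theta'$, an order-$t$ derivative can arise only when the entire operator $\theta'$ is applied to a single factor $v$ of order $|e|$, producing a term of the form $\tfrac{\partial f}{\partial v}\cdot\theta'v$ that is linear in the order-$t$ indeterminate $\theta'v$. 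All other contributions involve only algebraic indeterminates of order $\leq t-1$.

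Evaluating at $c$ therefore yields an $F(\nabla_{t-1}c)$-linear relation
$$S_f(c)\cdot(\theta u_i)(c)+\sum_{v\neq u_f}\tfrac{\partial f}{\partial v}(c)\cdot(\theta'v)(c)\in F(\nabla_{t-1}c),$$
where $v$ ranges over the remaining order-$|e|$ indeterminates in $f$. Since $S_f$ has strictly lower rank than $f$ and $\Lambda$ is a characteristic set, $S_f(c)\neq 0$; and a small rank check shows that if $v<u_f$ in rank with $v\neq u_f$, then $\theta'v$ is a distinct algebraic indeterminate of rank strictly less than $\theta u_i=\theta'u_f$ at the same order $t$ (derivations preserve the rank order). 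One can then solve for $(\theta u_i)(c)$ as an $F(\nabla_{t-1}c)$-affine combination of lower-rank order-$t$ values $(\theta'v)(c)$, to which the inductive hypothesis applies.

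The main obstacle will be the combinatorial verification of the structural observation above, namely that the order-$t$ part of $\theta'f$ is linear in order-$t$ indeterminates and assembles into $\sum_{v:|v|=|e|}\tfrac{\partial f}{\partial v}\,\theta'v$. Once this is settled, the auxiliary ingredients (monotonicity of rank under a single derivation, nonvanishing of the separant at a generic point of the prime $\Delta$-ideal $I_\Delta(X)$, and the base case of the rank induction at order $t$) are routine.
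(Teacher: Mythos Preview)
Your proposal is correct and follows essentially the same approach as the paper's proof. Both argue by induction on the rank of order-$t$ algebraic indeterminates, both reduce the inductive step to the structural fact that $\theta'f$ is of degree one in the order-$t$ indeterminates $\theta'v$ (for $v$ an order-$\ord(f)$ indeterminate of $f$) with the leader $\theta'u_f$ appearing with coefficient $S_f$, and both conclude via $S_f(c)\neq 0$ and the rank-preservation property of applying a fixed derivative $\theta'$. The only cosmetic difference is that the paper isolates this structural fact as a standalone claim~$(*)$ at the outset, whereas you develop it in the course of the argument via the Leibniz expansion.
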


\begin{proof}
We will use the following well known fact about $\Delta$-polynomials that can be verified easily by induction on the order.
Recall that $u=(u_1,\dots,u_n)$ are our $\Delta$-indeterminates.
\begin{itemize}
\item[($*$)]Suppose $f\in F\{u\}$ is a $\Delta$-polynomial of order $t$ and $\theta\in\Theta$ is a derivative of order $s>0$.
Let $w_1,\dots,w_p$ be the algebraic indeterminates of order $t$ appearing in $f$.
Then $\theta f$ is a degree one polynomial in $\theta w_1,\dots,\theta w_p$ with coefficients of order $<t+s$.
Moreover, if $w_1$ is the leader of $f$ then $\theta w_1$ is the leader of $\theta f$ and appears with coefficient $S_f$, the separant of $f$.
\end{itemize}

We prove by induction on the rank of $ru$, for $|r|=t$, that $rc\in \operatorname{span}_{F(\nabla_{t-1} c)}(B_tc)$.

If $r\in B_t$ there is nothing to prove.
So assume $r\notin B_t$, and suppose $ru=\theta u_j$.
Then there are derivatives $\theta _1,\theta_2$ such that  $ru=\theta_2\theta_1u_j$ and $\theta_1u_j$ is the leader of some $f\in\Lambda$.
Note that $\ord(\theta_2)>0$ since $t$ is greater than the order of all elements of $\Lambda$.
Let $w_1=\theta_1u_j,w_2,\dots,w_p$ be the algebraic indeterminates of order $\ord(f)$ that appear in $f$.
By~($*$), $\theta_2 f$ is of degree one in $\theta_2w_1,\dots,\theta_2w_p$ with coefficients of order $<\ord(f)+\ord(\theta_2)=t$.
Moreover, $ru=\theta_2w_1$ is the leader of $\theta_2 f$, and appears with coefficient $S_f$.
Therefore $ru$ is an $F[\nabla_{t-1}u,\frac{1}{S_f}]$-linear combination of $\{1,\theta_2w_2,\dots,\theta_2w_p\}$.
Since $c$ is generic in $X$ and $\Lambda$ is a characteristic set, $S_f(c)\neq 0$, and hence $rc\in\operatorname{span}_{F(\nabla_{t-1} c)}\{1,\theta_2w_2(c),\dots,\theta_2w_p(c)\}$.
Now $1\in \operatorname{span}_{F(\nabla_{t-1} c)}(B_tc)$ trivially.
On the other hand, each $\theta_2w_k$ for $k=2,\dots,p$, is of order $t$ and of rank strictly less than $ru$.
Hence by the induction hypothesis, each $\theta_2w_k(c)\in\operatorname{span}_{F(\nabla_{t-1} c)}(B_tc)$, completing the proof.

Note that this deals also with the base case of the induction, since if $ru$ is of minimal rank among order $t$ algebraic indeterminates, then $p$ must be $1$ in the above argument.
\end{proof}

Here is another property of transcendence index sets that will be useful.

\begin{lemma}
\label{additivity}
Given finite tuples $a$ and $b$, let $B\subseteq \mathbb N^m\times\{1,\dots,n\}$ be a transendence index set for $b$ over $F\langle a\rangle$.
There exists a natural number $N$, such that for all $t\geq 0$, $\nabla_tb\subseteq F(\nabla_{t+N}a,B_tb)^{\alg}$.
\end{lemma}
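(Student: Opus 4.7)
The plan is to proceed by induction on rank, in the style of the proof of Lemma~\ref{affine}, while carefully tracking the order of derivatives of $a$ that appear.

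Fix a characteristic set $\Lambda\subseteq F\langle a\rangle\{u\}$ of the prime $\Delta$-ideal $I_\Delta(b/F\langle a\rangle)$, chosen so that the associated transcendence index set in the sense of Fact~\ref{basis} is the given $B$. Since $\Lambda$ is finite and $F\langle a\rangle=\bigcup_\ell F(\nabla_\ell a)$, one may choose $N\in\mathbb N$ so that all coefficients of all elements of $\Lambda$ (together with all denominators appearing in these coefficients) lie in $F(\nabla_N a)$. I claim that, for this $N$, every $t\geq 0$, and every $r\in\mathbb N^m\times\{1,\dots,n\}$ with $|r|\leq t$, one has $rb\in F(\nabla_{t+N}a,B_tb)^{\alg}$. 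Ranging over all such $r$ then yields the lemma.

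The claim is proved by induction on the rank of the algebraic indeterminate $ru$. If $r\in B$, already $rb\in B_tb$. Otherwise $r$ lies above some $e\in E$, where $e$ is the leader index of some $f\in\Lambda$; let $\theta_2\in\Theta$ be such that $ru=\theta_2 u_f$, so that $\ord(\theta_2)=|r|-|e|\leq|r|\leq t$. Then $\theta_2 f\in F\langle a\rangle\{u\}$ vanishes at $b$, and its coefficients (as a polynomial in $\Theta u$) lie in $F(\nabla_{N+t}a)$. By observation $(*)$ from the proof of Lemma~\ref{affine} when $\ord(\theta_2)>0$, and directly from the fact that the initial of $f$ is nonvanishing at the generic point $b$ when $\theta_2=\id$, the indeterminate $ru$ appears in $\theta_2 f$ with a coefficient that does not vanish at $b$. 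Hence $\theta_2 f(b)=0$ is a nontrivial algebraic equation for $rb$ over $F(\nabla_{N+t}a)$ adjoined with the values at $b$ of the other algebraic indeterminates occurring in $\theta_2 f$; these are all of rank strictly less than $ru$ and of order at most $|r|\leq t$. By the induction hypothesis, each lies in $F(\nabla_{t+N}a,B_tb)^{\alg}$, and hence so does $rb$.

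The quantitative point that makes the lemma succeed, and the one I would watch most carefully, is that the extra derivatives of $a$ introduced when applying $\theta_2$ to the coefficients of $f$ grow with slope one in $|r|$: this is what allows the additive constant $N$, depending only on $\Lambda$, to work uniformly for all $t$. The rest is a routine verification once the characteristic-set formalism from Fact~\ref{basis} and the linearization $(*)$ from Lemma~\ref{affine} are in place.
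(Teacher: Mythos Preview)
Your proof is correct and follows essentially the same approach as the paper's: choose $N$ so that the coefficients of a characteristic set $\Lambda$ for $b$ over $F\langle a\rangle$ lie in $F(\nabla_N a)$, then induct on the rank of $ru$ using the derivative $\theta_2 f$ of an appropriate $f\in\Lambda$ to exhibit $rb$ as algebraic over lower-rank data. Your version is in fact slightly more careful than the paper's in two minor respects: you explicitly treat the case $\theta_2=\id$ (where one uses nonvanishing of the initial $I_f$ at $b$ rather than the separant), and you do not restrict to $t$ exceeding the maximal order in $\Lambda$, so the argument covers all $t\geq 0$ directly.
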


\begin{proof}
Let $\Lambda\subset F\langle a\rangle\{u\}$ be the characteristic set for $I_\Delta(b/F\langle a\rangle)$ corresponding to~$B$.
Let $\ell$ be an upper bound on the order of the elements of $\Lambda$.
If $f\in\Lambda$ then the coefficients of $f$ are $\Delta$-rational functions in $a$ over $F$.
Let $N$ be such that each of these coefficients, as $f$ ranges in $\Lambda$, can be written as a fraction of $\Delta$-polynomials in $a$ over $F$ of order $\leq N$.
We will show that this $\ell$ and $N$ work.

Let $t\geq\ell$.
We prove by induction on the rank of $ru$, for $r\in \mathbb N^m\times\{1,\dots,n\}$ with $|r|\leq t$, that $rb\in F(\nabla_{t+N}a,B_tb)^{\alg}$.
If $r\in B_t$ there is nothing to prove.
So assume $r\notin B_t$.
Then there is a derivative $\theta'$ such that  $ru$ is the leader of $\theta'f$ for some $f\in\Lambda$.
Moreover, when $\theta'f$ is viewed as a polynomial in $ru$ the leading coefficient is $S_f$, this is by~($*$) of the proof of~\ref{affine}.
Now $\theta'f(b)=0$ and $S_f(b)\neq 0$.
All the other algebraic indeterminates of $\theta'f$ are of strictly smaller rank, and so by induction when evaluated at $b$ they land in $F(\nabla_{t+N}a,B_tb)^{\alg}$.
On the other hand, the coefficients of $\theta'f$ can be written as fractions of $\Delta$-polynomials in $a$ over $F$ of order $\leq N+\ord(\theta')\leq N+t$ by choice of~$N$.
So $\theta'f(b)=0$ witnesses that $rb\in F(\nabla_{t+N}a,B_tb)^{\alg}$.
\end{proof}

\medskip
\subsection{Codimension one $\Delta$-subvarieties}
\label{subsection-cdb}
Suppose $X\subseteq K^n$ is an $F$-irreducible $\Delta$-variety.
We say that an $F$-irreducible $\Delta$-subvariety $Y\subseteq X$ is of {\em codimension one} if for generic $x\in X$ and $y\in Y$, $\trdeg_FF(\nabla_ty)=\trdeg_FF(\nabla_tx)-1$ for all sufficiently large $t$.
In this section we uniformly bound what is meant by ``sufficiently large".

\begin{proposition}
\label{boundcodim1}
Suppose $X\subseteq K^n$ is an irreducible $\Delta$-variety over $F$.
There exists $\ell\geq 0$ such that if $Y\subseteq X$ is a co-dimension one irreducible $\Delta$-subvariety over~$F$ then $\trdeg_FF(\nabla_ty)=\trdeg_FF(\nabla_tx)-1$ for all $t\geq\ell$, where $x\in X, y\in Y$ are generic.
\end{proposition}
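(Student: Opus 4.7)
The plan is to combine Fact~\ref{basis}---which identifies $\trdeg_F F(\nabla_t z)$ with the counting function of an initial set $B_Z\subseteq\mathbb{N}^m\times\{1,\dots,n\}$ associated to any characteristic set of the defining $\Delta$-ideal---with a uniform bound on the regularity of initial sets having a prescribed Hilbert polynomial.  First I apply Fact~\ref{basis} to $X$ and to any codimension one irreducible $\Delta$-subvariety $Y$ over $F$, obtaining initial sets $B$ and $B_Y$ satisfying $\trdeg_F F(\nabla_t x)=|B_t|$ and $\trdeg_F F(\nabla_t y)=|B_{Y,t}|$ for all $t\geq 0$.  For large $t$ each counting function agrees with a numerical polynomial, and the codimension one hypothesis forces the Hilbert polynomial of $B_Y$ to be exactly $\omega_X-1$, where $\omega_X$ is the Hilbert polynomial of $B$.

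The proposition now reduces to uniformly bounding, over all codimension one $Y$, the regularity index of $B_Y$---the smallest $t_0$ such that $|B_{Y,t}|=\omega_X(t)-1$ for all $t\geq t_0$.  Decomposing $B_Y=\bigsqcup_{j=1}^{n} B_Y^{(j)}\times\{j\}$ into its single-column initial subsets of $\mathbb{N}^m$, and observing that $\omega_X-1$ admits only finitely many decompositions as a sum of Hilbert polynomials of initial sets in $\mathbb{N}^m$ (each summand is dominated by $\omega_X$), the problem reduces to the single-column combinatorial assertion that initial sets in $\mathbb{N}^m$ with fixed Hilbert polynomial have regularity bounded by a function of that polynomial alone.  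This is Gotzmann's regularity theorem applied to the complementary monomial ideals, but I expect the paper to give an elementary proof adapted to initial sets---likely by induction on $m$ using the stratification of such sets by their minimal generators, together with the explicit form of Lemma~\ref{affine} which already writes $F(\nabla_t y)$ linearly in terms of $B_t y$ over $F(\nabla_{t-1}y)$ and thus allows one to analyse the jumps in $|B_{Y,t}|$ prolongation by prolongation.

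Once that combinatorial bound is secured, one takes $\ell$ to be the maximum of this bound and the regularity index of $B$ itself---both finite and depending only on $X$---whereupon for $t\geq\ell$ we have $|B_t|=\omega_X(t)$ and $|B_{Y,t}|=\omega_X(t)-1$, giving $\trdeg_F F(\nabla_t y)=\trdeg_F F(\nabla_t x)-1$.  The main obstacle is therefore entirely combinatorial: the uniform regularity bound on initial sets in $\mathbb{N}^m\times\{1,\dots,n\}$ with a prescribed Hilbert polynomial.  Once that ingredient is in hand the proposition is immediate, and its bound $\ell$ is visibly a function of $\omega_X$ alone.
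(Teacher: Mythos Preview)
Your reduction to a uniform regularity bound on initial sets with prescribed Hilbert polynomial is a valid strategy, and Gotzmann's theorem does deliver that bound.  But this is not the route the paper takes, and your speculation about an inductive proof on $m$ using Lemma~\ref{affine} is not what happens either.

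The paper exploits the inclusion $I_\Delta(X)\subset I_\Delta(Y)$ far more directly.  First, because $\nabla_t y$ is a Zariski specialisation of $\nabla_t x$, the set $B_t y$ still spans $F(\nabla_t y)$ algebraically, which gives the lower bound $\trdeg_F F(\nabla_t y)\geq |B_t|-1$ for \emph{all} $t$, not just eventually.  Then there is a dichotomy on characteristic sets: either $I_\Delta(Y)$ contains a polynomial of order at most $\ell_1:=\ord(f_k)$ not in $I_\Delta(X)$, in which case that polynomial witnesses the drop by one for every $t\geq\ell_1$; or $I_\Delta(Y)$ and $I_\Delta(X)$ agree up to order $\ell_1$, and then a rank comparison forces the characteristic set $\Gamma$ of $Y$ to contain all the leaders of $\Lambda$ plus at least one more.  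In this second case $B_Y\subseteq B$, and codimension one forces $B_Y=B\setminus\{r_Y\}$ for a single point $r_Y$ whose removal preserves initiality.  Such points are the \emph{properly $0$-dimensional} elements of $B$ in the sense of Sit~\cite{SitWell}, and there are only finitely many of them; taking $\ell_2$ to be the maximal order among them finishes the argument.

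So the paper never treats $B_Y$ as an arbitrary initial set with Hilbert polynomial $\omega_X-1$; it shows instead that $B_Y$ is either irrelevant (Case~1) or lies in a finite explicit list $B\setminus\{r_1\},\dots,B\setminus\{r_p\}$ (Case~2).  This is more elementary---it needs only Sit's finiteness statement rather than Gotzmann---and it yields a bound $\ell$ that is visibly computable from the characteristic set $\Lambda$ of $X$.  Your approach, by contrast, is more structural and would apply even without the containment $Y\subseteq X$, but the decomposition step (finitely many ways to split $\omega_X-1$ as a sum of $n$ column Hilbert polynomials) is not as immediate as you suggest and is in any case unnecessary if one invokes Gotzmann for monomial submodules of $\bigoplus_{j=1}^n k[x_1,\dots,x_m]$ directly.
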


\begin{proof}
Fix $c\in X$ generic and $B\subseteq\mathbb N^m\times\{1,\dots,n\}$ a transcendence index set for $c$ over~$F$.
So $\Lambda$ is a characteristic set for $I_\Delta(X)$, $E$ is the set of all $(e_1,\dots,e_m,j)\in\mathbb N^m\times\{1,\dots,n\}$ such that $\delta_1^{e_1}\dots\delta_m^{_m}u_j$ is a leader of an element of~$\Lambda$, and $B$ is the set of all points that do not lie above any element of $E$.
Fact~\ref{basis} tells us that $B_tc$ is a transcendence basis for $F(\nabla_tc)$ over $F$ for all $t<\omega$.

Fix a $\Delta$-subvariety $Y\subseteq X$ of codimension one, and generic $d\in Y$ over $F$.
We first argue that $\trdeg_FF(\nabla_td)\geq\trdeg_FF(\nabla_tc)-1$ for all $t\geq 0$.
We know that $F(\nabla_tc)$ is algebraic over $F(B_tc)$.
On the other hand,  $\nabla_td$ is a Zariski specialisation of $\nabla_tc$ since $Y\subseteq X$, and hence $F(\nabla_td)$ is algebraic over $F(B_td)$.
So if for some $t_0$ we had $\trdeg_FF(\nabla_{t_0}d)<\trdeg_FF(\nabla_{t_0}c)-1=|B_{t_0}|-1$, then there would be at least two elements of $B_{t_0}d$ that are algebraic over $F$ and the rest of the set.
As the $B_td$ form an increasing chain, this would persist and we would have that for all $t\geq t_0$, $\trdeg_FF(\nabla_{t}d)\leq |B_t|-2=\trdeg_FF(\nabla_{t}c)-2$, which contradicts the codimension one assumption.

Next, write $\Lambda=\{f_1,\dots,f_k\}$, listed as usual in strictly increasing order of rank and suppose $Y$ is such that there exists $g\in I_\Delta(Y)\setminus I_\Delta(X)$ with $\ord(g)\leq\ord(f_k)$.
Then setting $\ell_1:=\ord(f_k)$, which notice does not depend on $Y$, we have that~$g$ witnesses $I(\nabla_t d/F)\supsetneq I(\nabla_t c/F)$ for all $t\geq\ell_1$.
Hence $\trdeg_FF(\nabla_t d)\leq\trdeg_FF(\nabla_t c)-1$.

So it remains to consider those $Y$ such that $I_\Delta(Y)$ and $I_\Delta(X)$ agree up to order $\ord(f_k)$.
Let $Y$ be such and let $\Gamma=\{g_1,\dots,g_{k'}\}$ be a characteristic set for $I_\Delta(Y)$.
Then $\Gamma$ must have strictly lower rank than $\Lambda$ since $Y\subsetneq X$.
We claim that $k'>k$.
Indeed, if not, then there must be some $i< k$ such that $g_1,\dots,g_i$ have the same rank as $f_1,\dots,f_i$ while $\rank(g_{i+1})<\rank(f_{i+1})$.
From the way the ranking of $\Delta$-polynomials is defined this implies that $g_1,\dots,g_{i+1}$ all have order bounded by $\ord(f_k)$.
By our assumption on $Y$ it follows that $g_1,\dots,g_{i+1}\in I_\Delta(X)$, and so $\{g_1,\dots,g_{i+1}\}$ would be an autoreduced set in $I_\Delta(X)$ that is of strictly smaller rank than $\Lambda$, contradicting the minimality of characteristic sets.

Hence, it must be that case that $k'>k$ and that $g_1,\dots,g_k$ have the same rank as $f_1,\dots,f_k$.
But then the leaders of $\Gamma$ include all the leaders of $\Lambda$.
That is, if we set $E_Y$ to be all $(e_1,\dots,e_m,j)\in\mathbb N^m\times\{1,\dots,n\}$ such that $\delta_1^{e_1}\dots\delta_m^{_m}u_j$ is a leader of an element of~$\Gamma$, and set $B_Y$ to be the set of all points in $\mathbb N^m\times\{1,\dots,n\}$ that do not lie above any element of $E_Y$, then $B_Y$ is an initial subset of $B$.
Moreover, applying Fact~\ref{basis} to $Y$, we know that $(B_Y)_td$ is a transcendence basis for $F(\nabla_td)$ over $F$ for all $t\geq 0$, just as $B_tc$ is a transcendence basis for $F(\nabla_tc)$.
But now the codimension one hypothesis forces $B_Y=B\setminus\{r_Y\}$ for some $r_Y\in B$.
In particular, $r_Y$ has the special property that when you remove it from the initial set $B$ you still have an initial set.
A general study of the combinatorics of initial sets shows that an initial set can only have finitely many such points.
For example, in the terminology of~\cite{SitWell}, $r_Y$ is a {\em properly $0$-dimensional} subset of $B$ and Proposition~1 of~\cite{SitWell} says that an initial set can have only finitely many such.
So there exists $r_1,\dots,r_p\in B$, not depending on $Y$, such that $B_Y=B\setminus\{r_i\}$ for some $i=1,\dots,p$. 
Now setting $\ell_2:=\max\{|r_1|,\dots,|r_p|\}$ we have that for $t\geq \ell_2$,
\begin{eqnarray*}
\trdeg_FF(\nabla_t d)
&=&
|(B_Y)_t|\\
&=&
|\big(B\setminus\{r_i\}\big)_t| \ \ \ \ \ \ \ \ \ \ \text{for some $i=1,\dots,p$}\\
&=&
|B_t|-1  \ \ \ \ \ \ \ \ \ \ \ \ \ \ \ \text{ since $|r_i|\leq t$}\\
&=&
\trdeg_FF(\nabla_t c)-1
\end{eqnarray*}
So setting $\ell:=\max\{\ell_1,\ell_2\}$ proves Proposition~\ref{boundcodim1}.
\end{proof}

\begin{remark}
\label{determinecodim1}
Suppose $X\subseteq\mathbb A^n$ is an irreducible $\Delta$-variety over $F$.
Let $\ell$ witness the truth of Proposition~\ref{boundcodim1}.
Then all codimension one $\Delta$-subvarieties of $X$ are determined by their $\ell$th prolongations.
That is, given $Y, Z\subseteq X$ codimension one irreducible $\Delta$-subvarieties over~$F$ with $y\in Y$ and $z\in Z$ generic, if $\loc(\nabla_\ell y/F)=\loc(\nabla_\ell z/F)$ then $Y=Z$.
\end{remark}

\begin{proof}
Note that $Y$ is determined by the directed sequence of algebraic varieties $Y_t:=\loc(\nabla_ty/F)$, $t\geq 0$.
Suppose $Y_\ell=Z_\ell$ as codimension one algebraic subvarieties of $X_\ell$.
Given $t\geq \ell$, let $\pi:X_t\to X_\ell$ be the co-ordinate projection.
Since $Y_{t}$ is still a codimension one algebraic subvariety of $X_{t}$ by choice of $\ell$, it follows that the generic fibre of $Y_{t}$ over $Y_\ell$ is the full generic fibre of $\pi$.
Similarly for $Z_{t}$.
That is,  $Y_{t}$ and $Z_{t}$ are subvarieties of $X_{t}$ that project dominantly onto the same subvariety of $X_\ell$ with the same generic fibre -- by irreducibility they must agree.
Hence $Y=Z$, as desired.
\end{proof}

\medskip
\subsection{The finiteness theorem}
\label{subsection-ft}

\begin{theorem}
\label{ghj-dav}
Suppose $F\subseteq \mathcal C$ is a subfield of the constants and $X\subseteq K^n$ is an $F$-irreducible $\Delta$-variety.
Let $F\langle X\rangle$ denote the $\Delta$-rational function field of $X$.
If $F\langle X\rangle\cap\mathcal C=F$ then $X$ has only finitely many codimension one $F$-irreducible $\Delta$-subvarieties.
\end{theorem}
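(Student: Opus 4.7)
The plan is to reduce Theorem~\ref{ghj-dav} to Theorem~\ref{ghjD} by packaging $X$ together with its codimension one $\Delta$-subvarieties as a single $D$-variety of type $(m,r)$ living on a sufficiently high prolongation of $X$. I would first reduce to the case that $F$ is algebraically closed: since $\mathcal{C}$ is algebraically closed we have $F^{\alg}\subseteq\mathcal{C}$, the $F$-irreducible codimension one $\Delta$-subvarieties of $X$ are finite unions of Galois conjugates of codimension one $F^{\alg}$-irreducible $\Delta$-subvarieties of an $F^{\alg}$-irreducible component $X'$ of $X$, and the condition $F\langle X\rangle\cap\mathcal{C}=F$ transfers to $F^{\alg}\langle X'\rangle\cap\mathcal{C}=F^{\alg}$ by a standard Galois descent.

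With $F$ algebraically closed, fix a generic point $c\in X$ over $F$, a characteristic set $\Lambda$ of $I_\Delta(X)$, and the associated transcendence index set $B\subseteq\mathbb{N}^m\times\{1,\dots,n\}$ from Fact~\ref{basis}. Choose $\ell$ strictly larger than the orders of all elements of $\Lambda$ and large enough that Proposition~\ref{boundcodim1} holds from prolongation level $\ell$ onwards. Set $V:=\loc(\nabla_\ell c/F)$, an irreducible affine variety over $F$, and let $S\subseteq T^mV$ be the Zariski closure over $F$ of the locus of $(\nabla_\ell c,\delta_1\nabla_\ell c,\dots,\delta_m\nabla_\ell c)$. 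The inclusion into $T^mV$ is immediate from $F\subseteq\mathcal{C}$: for any $f\in I(V/F)$ and $a=\nabla_\ell c$, the coefficients of $f$ are constants, so $0=\delta_k(f(a))=df_a(\delta_k a)$. By Lemma~\ref{affine}, at the generic point $a$ every coordinate of $\delta_k a$ is an $F(a)$-affine combination of the ``new'' free coordinates indexed by $B_{\ell+1}\setminus B_\ell$; after shrinking to a Zariski open subset this gives $(V,S)$ the structure of a $D$-variety of type $(m,r)$ for a well-defined constant $r$.

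The no-$D$-constants hypothesis now transfers directly: if $g\in F(V)\setminus F$ were a $D$-constant then evaluating $(dg)_a(\delta_k a)$ at the generic $a=\nabla_\ell c$ gives $\delta_k(g(a))=0$ for all $k$ (again using $F\subseteq\mathcal{C}$), whence $g(\nabla_\ell c)\in F\langle X\rangle\cap\mathcal{C}=F$, forcing $g\in F$ by irreducibility of $V$, a contradiction. Theorem~\ref{ghjD} then supplies finitely many codimension one $D$-subvarieties of $(V,S)$. For each codimension one $F$-irreducible $\Delta$-subvariety $Y\subseteq X$ with generic point $y$, the variety $\loc(\nabla_\ell y/F)\subseteq V$ is codimension one by Proposition~\ref{boundcodim1} and is a $D$-subvariety by a direct genericity argument at $y$ using $(y,\delta_1 y,\dots,\delta_m y)\in S$. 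By Remark~\ref{determinecodim1} the map $Y\mapsto\loc(\nabla_\ell y/F)$ is injective, giving the desired finiteness. I expect the main obstacle to be verifying cleanly that $S$ has constant affine fiber dimension over a Zariski open neighborhood of the generic point of $V$, so that $(V,S)$ genuinely qualifies as a $D$-variety of type $(m,r)$; this is where Lemma~\ref{affine} together with the combinatorics of the transcendence index set does the heavy lifting.
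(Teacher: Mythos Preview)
Your proposal is correct and follows essentially the same route as the paper: reduce to $F=F^{\alg}$, build $(V,S)$ on the $\ell$th prolongation with $\ell$ chosen via Proposition~\ref{boundcodim1} and Lemma~\ref{affine}, transfer the no-new-constants hypothesis to the no-$D$-constants hypothesis, apply Theorem~\ref{ghjD}, and finish with Remark~\ref{determinecodim1}. The only place to tighten is the check that $\loc(\nabla_\ell y/F)$ is a $D$-subvariety: knowing $\nabla(\nabla_\ell y)\in S_{\nabla_\ell y}$ is not enough---you need the whole fibre $S_{\nabla_\ell y}$ inside $(T_{\nabla_\ell y}W)^m$, which the paper obtains by matching $\trdeg\big(\nabla(\nabla_\ell y)/F(\nabla_\ell y)\big)=r=\dim S_{\nabla_\ell y}$ using Proposition~\ref{boundcodim1} at level $\ell+1$ (and you should also note that the finitely many $Y$ with $\loc(\nabla_\ell y/F)$ landing outside the Zariski open $V^\circ$ are accounted for separately).
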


\begin{proof}
We first reduce to the case when $F=F^{\alg}$ and so $X$ is absolutely irreducible.
Suppose $c\in X$ is generic over $F$.
Then $c$ is a generic point of an irreducible component of $X$ over $F^{\alg}$.
If there is $b\in \big(F^{\alg}\langle c\rangle\cap\mathcal C\big)\setminus F^{\alg}$, then $b\in\acl(F,c)$ so that a canonical parameter for the finite orbit of $b$ over $Fc$, say $\bar b$, is a tuple from $F\langle c\rangle\cap\mathcal C$.
As $b\in\acl(\bar b)$, we must have that $\bar b$ is not defined over $F$.
That is, $\big(F\langle c\rangle\cap\mathcal C\big)\setminus F\neq\emptyset$.
Since $c$ is generic in $X$ over $F$, this contradicts the assumption on the the $\Delta$-constants of $F\langle X\rangle$.
Hence $F^{\alg}\langle c\rangle\cap\mathcal C=F^{\alg}$.
Assuming we have proved the theorem for irreducible $\Delta$-varieties, we would get that each irreducible component of $X$ has only finitely many irreducible codimension one $\Delta$-subvarieties over $F^{\alg}$.
But every irreducible component of an $F$-irreducible codimension one $\Delta$-subvariety of $X$ is a codimension one $\Delta$-subvariety of an irreducible component of $X$ over $F^{\alg}$.
So we obtain the desired finiteness statement for $X$ as well.
We may therefore assume that $X$ is irreducible and $F=F^{\alg}$.

Let $\ell$ be an upper bound for the order of all the elements of a fixed characteristic set of $I_\Delta(X)$, and also big enough to witness Proposition~\ref{boundcodim1}.
Let $c\in X$ be generic over $F$.
Set $v=\nabla_\ell c$, $V=\loc(v/F)$, and $S=\loc(\nabla v/F)\subseteq T^mV$, and $r=\trdeg\big(F(\nabla v)/F(v)\big)$.

We first claim that $S_v=\loc\big(\nabla v/F(v)\big)$ is an ($r$-dimensional) affine subspace of $(T_vV)^m$.
This follows from the fact that the $F$-transendence basis of $\nabla_{\ell+1}c$ given by Fact~\ref{basis} is also an $F(\nabla_\ell c)$-linear spanning set by Lemma~\ref{affine} applied to $t=\ell+1$.
In other words, there is a subtuple $(\eta_1,\dots,\eta_r)$ of $\nabla_{\ell+1}c\subseteq\nabla v$ that is a transcendence basis for $F(\nabla_{\ell+1}c)=F(\nabla v)$ over $F(\nabla_{\ell}c)=F(v)$, and such that $(1,\eta_1,\dots,\eta_r)$ is a linear basis for $F(\nabla v)$ over $F(v)$.
It follows that $S_v$ is an affine subspace of $(T_vV)^m$.

Let $V^\circ\subseteq V$ be a nonempty Zariski open subset of $V$ over $F$, such that $S_a$ is an affine subspace of $(T_aV)^m$ of dimension $r$, for all $a\in V^\circ$.
Let $S^\circ=S\upharpoonright_{V^\circ}$.
So $(V^\circ,S^\circ)$ is an algebraic $D$-variety of type $(m,r)$.

Now suppose, toward a contradiction, that $(V^\circ,S^\circ)$ admits a $D$-constant rational function $f\in F(V)\setminus F$.
So $d^mf$ vanishes on $S^\circ_v=S_v$ which contains $\nabla v$.
Hence $\nabla f(v)=d^mf\nabla(v)=(v,0)$, and so $\delta_kf(v)=0$ for all $k=1,\dots,m$.
Since $v=\nabla_{\ell}c$, we can view $f(v)\in F\langle X\rangle$, and we have just shown that it is a new $\Delta$-constant element of the $\Delta$-rational function field, contradicting the assumption on $X$.
Hence, $(V^\circ,S^\circ)$ admits no nonconstant $D$-constant rational functions.

By Theorem~\ref{ghjD}, $(V^\circ,S^\circ)$ has only finitely many codimension one $D$-subvarieties.

Let $Y\subset X$ be an irreducible codimension one $\Delta$-algebraic subvariety of $X$ over $F$, and let $y\in Y$ be generic.
By choice of $\ell$ witnessing Proposition~\ref{boundcodim1}, $Y_\ell=\loc(\nabla_\ell y/F)$ is a codimension one irreducible algebraic subvariety of $V$.
If $Y_\ell\cap V^\circ=\emptyset$, then $Y_\ell$ must be one of finitely many irreducible component of $V\setminus V^\circ$.
So assume $W:=Y_\ell\cap V^\circ\neq\emptyset$.
We claim that $W$ is a codimension one $D$-subvariety of $(V^\circ, S^\circ)$.
That is, $S\upharpoonright_W\subseteq T^mW$.
Indeed, from the fact that $y$ is a $\Delta$-specialisation of $c$, we get that $\nabla w$ is a Zariski specialisation of $\nabla v$, where $w:=\nabla_\ell y$.
So $\nabla w\in S_w$.
On the other hand,
\begin{eqnarray*}
\trdeg\big(\nabla w/F(w)\big)
&=&
\trdeg\big(\nabla_{\ell+1}y/F(\nabla_\ell y)\big)\\
&=&
\trdeg\big(\nabla_{\ell+1}c/F(\nabla_\ell c)\big) \ \ \ \ \ \text{by choice of $\ell$ witnessing~\ref{boundcodim1}}\\
&=&
r\\
&=&
\dim S_w
\end{eqnarray*}
It follows that $S_w=\loc\big(\nabla w/F(w)\big)$, and hence $S_w\subseteq (T_wW)^m$.
As $w$ is generic in $W$,
we get $S\upharpoonright_W\subseteq T^mW$, as desired.

Now, if $Y, Z\subseteq X$ are codimension one irreducible $\Delta$-subvarieties over~$F$, and $Y_\ell\cap V^\circ=Z_\ell\cap V^\circ\neq\emptyset$, then $Y_\ell=Z_\ell$, and so $Y=Z$ by  Remark~\ref{determinecodim1}.
So, from the fact that $(V^\circ,S^\circ)$ has only fnitely many codimension one $D$-subvarieties over $F$ we get that $X$ has only finitely many codimension one irreducible $\Delta$-subvarieties defined over $F$.
\end{proof}

\medskip
\subsection{A relative version and nonconstant coefficients}
The statement of Theorem~\ref{ghj-dav} can be improved so as to be independent of whether $F\langle X\rangle$ has new constants or not.

\begin{theorem}
\label{ghj-dav-absolute}
Suppose $F\subseteq \mathcal C$ is a constant subfield and $X\subseteq K^n$ is an $F$-irreducible $\Delta$-variety.
There exists an algebraic variety $V$ over~$F$ and a dominant $\Delta$-rational map $f:X\to V(\mathcal C)$ over $F$ such that all but finitely many codimension one $F$-irreducible $\Delta$-subvarieties of $X$ arise as $F$-irreducible components of $\Delta$-subvarieties of the form $f^{-1}\big(W(\mathcal C)\big)$ where $W\subseteq V$ is an algebraic subvariety over $F$.
\end{theorem}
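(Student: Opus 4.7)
My strategy is to identify the field of new $\Delta$-constants inside $F\langle X\rangle$ and package them as a dominant $\Delta$-rational map to a constant algebraic variety, so that the complementary information lives on a $\Delta$-subvariety with no new constants, to which Theorem~\ref{ghj-dav} applies. Concretely, pick $c \in X$ generic over $F$ and set $L := F\langle c\rangle \cap \mathcal{C}$. A classical theorem of Kolchin ensures $L$ is finitely generated as an ordinary field over $F$; choose field generators $h_1, \dots, h_N \in F\langle X\rangle$ for $L/F$. Each $h_i$ is a $\Delta$-constant at a generic point, so $h := (h_1, \dots, h_N)$ defines a $\Delta$-rational map $f : X \to \mathbb{A}^N$ with image in $\mathcal{C}^N$. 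Taking $V := \loc(h(c)/F)$ produces an $F$-irreducible affine variety with $F(V) = L$, and $f : X \to V(\mathcal{C})$ is the required dominant $\Delta$-rational map over $F$.

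Next, consider the $L$-irreducible $\Delta$-variety $X' := \kloc(c/L) \subseteq X$. Since $L \subseteq F\langle c\rangle$, we have $L\langle X'\rangle = F\langle c\rangle$, whose intersection with $\mathcal{C}$ equals $L$; as $L \subseteq \mathcal{C}$, Theorem~\ref{ghj-dav} applied to $X'$ over $L$ yields finitely many codimension one $L$-irreducible $\Delta$-subvarieties $Z_1, \dots, Z_s \subseteq X'$. Set $\widetilde Y_i := \kloc(z_i/F)$ for an $L$-generic $z_i \in Z_i$.

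Now let $Y \subseteq X$ be a codimension one $F$-irreducible $\Delta$-subvariety, and let $W$ denote the Zariski closure of $f(Y)$ in $V$ over $F$. If $W \subsetneq V$, then $Y \subseteq f^{-1}(W(\mathcal{C}))$ and is an $F$-irreducible component thereof, of the form asserted in the theorem. Otherwise $f\upharpoonright_Y$ is dominant, and I claim $Y = \widetilde Y_i$ for some $i$. Let $y \in Y$ be $F$-generic. Since $h(y)$ and $h(c)$ are both $F$-generic in $V(\mathcal{C})$, they have the same $\Delta$-type over $F$, and homogeneity of $K$ conjugates $y$ to some $y_0 \in Y$ with $h(y_0) = h(c)$; then $y_0$ lies in the $L$-closed set $\{x : h(x) = h(c)\}$, and a further $L$-automorphism (which fixes $Y$ setwise, as $Y$ is $F$-defined) moves $y_0$ to a point $y^*$ in $Y \cap X'$, using that $X'$ is the $L$-irreducible component of $\{h = h(c)\}$ containing $c$.

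The final step is a dimension count showing $\kloc(y^*/L)$ has codimension one in $X'$, so equals some $Z_i$; then $Y = \widetilde Y_i$ since both are the $F$-Kolchin closure of an $L$-generic point of $Z_i$. For $t$ sufficiently large, $L \subseteq F(\nabla_t c)$ (since $L = F(h(c))$), so $\trdeg_L L(\nabla_t c) = \trdeg_F F(\nabla_t c) - \trdeg_F L$; and the relation $h(y^*) = h(c)$ places the same copy of $L$ inside $F(\nabla_t y^*)$, giving $\trdeg_L L(\nabla_t y^*) = \trdeg_F F(\nabla_t y^*) - \trdeg_F L$. Since $y^*$ is $F$-conjugate to $y$, Proposition~\ref{boundcodim1} yields $\trdeg_F F(\nabla_t y^*) = \trdeg_F F(\nabla_t c) - 1$ for $t$ large, and subtracting produces $\trdeg_L L(\nabla_t y^*) = \trdeg_L L(\nabla_t c) - 1$, as required. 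The main obstacle I anticipate is rigorously justifying the $L$-automorphism move bringing $y_0$ into $X'$: the $L$-irreducible components of $\{h = h(c)\}$ need not be Galois-conjugate over $L$, so this likely requires replacing the $L$-automorphism argument with a direct type-theoretic verification that some $L$-type extending $\tp(y/F)$ is realized as the generic type of a codimension one $L$-subvariety of $X'$.
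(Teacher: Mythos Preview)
Your approach is essentially identical to the paper's: both identify the constant subfield $L$ of $F\langle X\rangle$, realise it as $F(V)$ for an algebraic variety $V$, package the inclusion as a dominant $\Delta$-rational map $f:X\to V(\mathcal C)$, apply Theorem~\ref{ghj-dav} to the generic fibre, and then split codimension one subvarieties according to whether they dominate $V(\mathcal C)$ or not. Your dimension count is exactly the paper's.

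The obstacle you flag is not real. Your $X'=\kloc(c/L)$ coincides with the full generic fibre $X_\eta:=f^{-1}(\eta)$ where $\eta=h(c)$: since $f$ is dominant between $F$-irreducible $\Delta$-varieties, the $\Delta$-coordinate ring of $X_\eta$ over $L$ is a localisation of the integral domain $F\{X\}$, hence itself a domain, so $X_\eta$ is $L$-irreducible and therefore equals $X'$. Thus your $y_0$ already lies in $X'$ and no $L$-automorphism move is needed. The paper phrases this step more directly by working with $X_\eta$ and $Y_\eta$ from the outset, which avoids the conjugation argument entirely; it also explicitly separates out the finitely many $Y$ lying in the indeterminacy locus of $f$, a case your ``$W\subsetneq V$'' clause does not literally cover but which is handled by the same maximality observation.
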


\begin{proof}
The total constant field of $F\langle X\rangle$ is a function field over $F$ -- see for example~\cite[Proposition~14, $\S$II.11]{kolchin73}.
It is of the form $F(V)$ for some $F$-irreducible algebraic variety $V$.
Note that $L:=F\langle V(\mathcal C)\rangle=F(V)$, so that the inclusion $F(V)\subseteq F\langle X\rangle$ induces a dominant $\Delta$-rational map $f:X\to V(\mathcal C)$.

Over $L$ the $\Delta$-field $F\langle X\rangle$ has no new $\Delta$-constant elements.
This means that if $\eta\in V(\mathcal C)$ is generic over $F$, then $L=F(\eta)$ and the fibre $X_\eta:=f^{-1}(\eta)$ is an $L$-irreducible $\Delta$-subvariety of $X$
 with the property that its $\Delta$-rational function field over $L$ has $L$ as its constant field.
Applying Theorem~\ref{ghj-dav} to $X_\eta$, we get that $X_\eta$ has has only finitely many codimension one $L$-irreducible $\Delta$-subvarieties.

Suppose that $Y$ is an $F$-irreducible codimension one $\Delta$-subvariety of $X$ that maps dominantly onto $V(\mathcal C)$.
We claim that $Y_\eta$ is codimension one in $X_\eta$.
Indeed, let $c\in X_\eta$ and $d\in Y_\eta$ be generic over $L$, and let $\ell\geq 0$ be big enough so that the $\Delta$-rational map $f(u)$ is of the form $g(\nabla_\ell u)$ for some rational map $g(u)$.
Since $f(c)=f(d)=\eta$, we get that for all $t\geq\ell$, $\eta\in F(\nabla_tc)$ and $\eta\in F(\nabla_td)$.
Hence
$$\trdeg_FF(\nabla_tc)= \trdeg_LL(\nabla_tc)+\trdeg_FL$$
and
$$\trdeg_FF(\nabla_td)= \trdeg_LL(\nabla_td)+\trdeg_FL$$
Taking $\ell$ larger, we may also assume that $\trdeg_FF(\nabla_td)=\trdeg_FF(\nabla_tc)-1$, for all $t\geq\ell$.
So $\trdeg_LL(\nabla_td)=\trdeg_LL(\nabla_tc)-1$, for all $t\geq\ell$, as desired.

We have proved that $X$ has only finitely many codimension one $F$-irreducible $\Delta$-subvarieties that map dominantly onto $V(\mathcal C)$.
So it remains to consider those that either fall in the indeterminacy locus of $f$, or get mapped dominantly onto proper $\Delta$-subvarieties of $V(\mathcal C)$.
Since codimension one $F$-irreducible $\Delta$-subvarieties are maximal among proper $F$-irreducible $\Delta$-subvarieties, those that land in the indeterminacy locus of $f$ must be $F$-irreducible components of this indeterminacy locus; and hence there are only finitely many of them.
Finally, suppose $Y\subseteq X$ is a codimension one $F$-irreducible $\Delta$-subvarieties such that the Kolchin closure of $f(Y)$ is of the form $W(\mathcal C)$ for some proper irreducible algebraic subvariety $W\subseteq V$ over $F$.
Then, by maximality of $Y$ in $X$, $Y$ is an irreducible component of $f^{-1}\big(W(\mathcal C)\big)$.
\end{proof}

Note that Theorem~\ref{ghj-dav} is a special case of Theorem~\ref{ghj-dav-absolute}: under the assumptions on $X$ imposed by~\ref{ghj-dav}, the map $f$ given by~\ref{ghj-dav-absolute} would have to be constant, and so the conclusion would be that there are only finitely many codimension one $F$-irreducible $\Delta$-subvarieties.

\begin{remark}
The $\Delta$-rational map $f:X\to V(\mathcal C)$ that we constructed in the above proof could be called an {\em algebraic reduction} of $X$, in analogy to complex bimeromorphic geometry, and will satisfy a certain natural universal property that we leave to the reader to formulate.
\end{remark}

One advantage of this latter formulation is that it generalises readily to $\Delta$-varieties not necessarily defined over the constants.

\begin{corollary}
\label{ghj-dav-general}
Suppose $F\subseteq\mathcal C$ is a constant subfield, and $L$ is a finitely generated $\Delta$-field extension of $F$.
Let $X\subseteq K^n$ be an $L$-irreducible $\Delta$-variety.
There exists an algebraic variety $V$ over the constants of $L$, $L_0$, and a dominant $\Delta$-rational map $f:X\to V(\mathcal C)$ over $L$, such that all but finitely many codimension one $L$-irreducible $\Delta$-subvarieties of $X$ arise as $L$-irreducible components of $\Delta$-subvarieties of the form $f^{-1}\big(W(\mathcal C)\big)$ where $W\subseteq V$ is an algebraic subvariety over $L_0$.

In particular, if the $\Delta$-constant field of $L\langle X\rangle$ is contained in $L$ then $X$ has only finitely many codimension one $L$-irreducible $\Delta$-subvarieties.
\end{corollary}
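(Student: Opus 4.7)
The plan is to deduce Corollary~\ref{ghj-dav-general} from Theorem~\ref{ghj-dav-absolute} by absorbing the nonconstant parameters of $L$ into the ambient $\Delta$-variety.  I would first pick a finite $\Delta$-tuple $a$ with $L=F\langle a\rangle$, and a generic point $c\in X$ over $L$.  Set $\tilde X:=\kloc(a,c/F)$, an $F$-irreducible $\Delta$-variety, with projection $\pi:\tilde X\to Z:=\kloc(a/F)$.  Then $X$ is an $L$-irreducible component of the fibre $\tilde X_a:=\pi^{-1}(a)$, and any codimension-one $L$-irreducible $\Delta$-subvariety $Y\subseteq X$ lifts, via a generic point $y\in Y$, to $\tilde Y:=\kloc(a,y/F)$, a codimension-one $F$-irreducible $\Delta$-subvariety of $\tilde X$ that dominates $Z$ (the codimension being preserved by the dimension formula for the fibration $\pi$).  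Each $\tilde Y$ produces only finitely many such $Y$'s, namely the codimension-one $L$-irreducible components of $\tilde Y\cap\tilde X_a$, so it suffices to control the codimension-one $F$-irreducible $\Delta$-subvarieties of $\tilde X$ dominating~$Z$.

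Next, apply Theorem~\ref{ghj-dav-absolute} to $\tilde X$ over the constant subfield $F$ to obtain an algebraic variety $\tilde V$ over~$F$ and a dominant $\Delta$-rational map $\tilde f:\tilde X\to\tilde V(\mathcal C)$ with the stated finiteness property.  Restrict $\tilde f$ to $\tilde X_a$ to get a $\Delta$-rational map $f:X\to\tilde V(\mathcal C)$ over $L$, and let $V^L\subseteq\tilde V$ be the Zariski closure over $L$ of $f(c)$.  The crux is to show that $V^L$ descends to a variety $V$ over $L_0:=L\cap\mathcal C$.  For this I would invoke the classical fact that $\mathcal C$ and $L$ are linearly disjoint over $L_0$ in any partial differential field: any $L$-linear relation $\sum a_i\eta_i=0$ among constants remains a relation after each $\delta_k$ is applied to the $a_i$, so the $L$-space of such relations is $\Delta$-stable, and a Wronskian-style argument with the commuting $\delta_k$'s extracts a relation with coefficients in $L_0$.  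Linear disjointness then forces the defining ideal of $V^L$ in $L[\tilde V]$ to be generated by its intersection with $L_0[\tilde V]$, so $V^L=V\times_{L_0}L$ for some $V$ over $L_0$, and $f$ factors through $V(\mathcal C)$.

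Given one of the good $\tilde Y$'s produced by Theorem~\ref{ghj-dav-absolute}, written as an $F$-irreducible component of $\tilde f^{-1}\bigl(\tilde W(\mathcal C)\bigr)$ for some $\tilde W\subseteq\tilde V$ over~$F$, set $W:=\tilde W\cap V$, defined over $L_0$.  Intersecting with $\tilde X_a$ and extracting the $L$-irreducible component through $y$ exhibits $Y$ as an $L$-irreducible component of $f^{-1}\bigl(W(\mathcal C)\bigr)$, giving the first assertion of the corollary.  For the final clause, suppose the $\Delta$-constant field of $L\langle X\rangle$ is contained in $L$; then each coordinate of $f$ is a $\Delta$-constant element of $L\langle X\rangle$, hence lies in $L\cap\mathcal C=L_0$, so $V$ is a point, every nonempty $f^{-1}\bigl(W(\mathcal C)\bigr)$ equals $X$, and no codimension-one $L$-irreducible $\Delta$-subvariety arises in this form.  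The main claim then forces all codimension-one $L$-irreducible $\Delta$-subvarieties to lie in the finite exceptional set.  The principal obstacle in the above is the descent of $V^L$ to $L_0$; all other steps are routine transfer from the already-proved absolute version.
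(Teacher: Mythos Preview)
Your proposal is correct and follows essentially the same route as the paper: lift $X$ to $\widehat X=\kloc(a,c/F)$ over the constant field $F$, apply Theorem~\ref{ghj-dav-absolute} there, and restrict back to the fibre over~$a$, setting $W=\widehat W\cap V$.

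Two points of emphasis are inverted relative to the paper. The codimension-one preservation $Y\rightsquigarrow\widehat Y$ that you attribute to a ``dimension formula for the fibration~$\pi$'' is exactly what the paper works for: it invokes Lemma~\ref{additivity} to get a uniform shift $N$ with $\nabla_t b\subseteq F(\nabla_{t+N}a,B_tb)^{\alg}$ and then carries out an explicit transcendence-degree computation comparing $\trdeg_F F(\nabla_t(\nabla_N a,b))$ and $\trdeg_F F(\nabla_t(\nabla_N a,c))$. This is not a one-liner in the infinite-dimensional setting, and your sketch underestimates it. Conversely, the descent of $V$ to $L_0$ that you flag as the principal obstacle is dispatched in the paper in a single phrase, ``by stable embedability of the constants'': since the Kolchin closure of $\widehat f(X)$ sits inside $\mathcal C^n$ and is defined with parameters from $L$, it is automatically defined over $L\cap\mathcal C=L_0$. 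Your linear-disjointness/Wronskian argument is a correct algebraic unpacking of that model-theoretic fact, but it is not where the difficulty lies.
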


\begin{proof}
The ``in particular" clause follows from the main statement in exactly the same way that Theorem~\ref{ghj-dav} is a special case of Theorem~\ref{ghj-dav-absolute}.

Let $L=F\langle a\rangle$, $Z=\kloc(a/F)$, $b\in X$ generic over $L$, $\widehat X$ the $\kloc(a,b/F)$, and $\pi:\widehat X\to Z$ the co-ordinate projection taking $(a,b)$ to $a$.
Then $X$ can be identified with the generic fibre $\widehat X_a$ of $\pi$.
Let $\widehat V$ be the algebraic variety over $F$, and $\widehat f:\widehat X\to\widehat V(\mathcal C)$ the dominant $\Delta$-rational map over~$F$, given by Theorem~\ref{ghj-dav-absolute} applied to $\widehat X$.
Note that the Kolchin closure of $\widehat f(X)$ is, by stable embedability of the constants, of the form $V(\mathcal C)$ for some algebraic subvariety $V\subseteq \widehat V$ defined over $L_0$.
Restricting to the generic fibre of $\pi$ we get a dominant $\Delta$-rational map $f:X\to V(\mathcal C)$.

Now suppose $Y\subseteq X$ is a codimension one $L$-irreducible $\Delta$-subvariety over $L$.
Let $c\in Y$ be generic over $L$ and set $\widehat Y:=\kloc(a,c/F)$, so that $Y=\widehat Y_a$.
We claim that $\widehat Y$ is of codimension one in $\widehat X$.
Indeed, let $N$ be big enough to witness Lemma~\ref{additivity} applied to $(a,b)$ and $(a,c)$.
That is, if $B$ is a transcendence index set for $b$ over $L$ and $C$ is a transcendence index set for $c$ over~$L$, then $\nabla_tb\subseteq F(\nabla_{t+N}a,B_tb)^{\alg}$ and $\nabla_tc\subseteq F(\nabla_{t+N}a,C_tc)^{\alg}$ for all $t\geq 0$.
So $\trdeg\big(\nabla_tb/F(\nabla_{t+N}a)\big)=|B_t|$ and $\trdeg\big(\nabla_tc/F(\nabla_{t+N}a)\big)=|C_t|$.
Hence
\begin{eqnarray*}
\trdeg_FF\big(\nabla_t(\nabla_Na,b)\big)
&=&
|B_t|+\trdeg_FF\big(\nabla_t(\nabla_Na)\big)\\
&=&
\trdeg_LF\big(\nabla_t(b)\big)+\trdeg_FF\big(\nabla_t(\nabla_Na)\big)\\
&=&
\trdeg_LF\big(\nabla_t(c)\big)+1+\trdeg_FF\big(\nabla_t(\nabla_Na)\big)\\
&=&
|C_t|+\trdeg_FF\big(\nabla_t(\nabla_Na)\big)+1\\
&=&
\trdeg_FF\big(\nabla_t(\nabla_Na,c)\big)+1
\end{eqnarray*}
where the third equality is for sufficiently large $t$, as $Y$ is codimension one in $X$.
So $\kloc(\nabla_Na,c/F)$ has codimension one in $\kloc(\nabla_Na, b/L)$.
Applying a $\Delta$-isomorphism we get that $\widehat Y$ has codimension one in $\widehat X$.

Distinct $Y$ will give rise to distinct $\widehat Y$, so for all but finitely many $Y$ we will get that $\widehat Y$ is an $F$-irreducible component of $\widehat f^{-1}\big(\widehat W(\mathcal C)\big)$ for some algebraic subvariety $\widehat W\subseteq \widehat V$ defined over $F$.
Restricting to the generic fibre of $\pi$, $Y$ is an $L$-irreducible component of $\widehat f^{-1}\big(\widehat W(\mathcal C)\big)\cap X=f^{-1}\big(\widehat W\cap V(\mathcal C)\big)$.
So $W:=\widehat W\cap V$ works.
\end{proof}

The assumption that $L$ be finitely generated over its constants is necessary.
There exist $\Delta$-varieties of order $1$ that admit no nonconstant $\Delta$-rational functions to $\mathcal C$ over any parameter set (i.e., whose Kolchin generic type is orthogonal to the constants) -- for example, a general way of producing these (with $m=1$) was developed in~\cite[$\S$2]{hrushovski-itai}.
Let $X$ be such, fix an infinite collection $P$ of points on $X$, and then pass to a $\Delta$-field extension $L$ over which $X$ and all the points in $P$ are defined.
So the $\Delta$-constants of $L\langle X\rangle$ are contained in $L$, but each member of $P$ is a codimension one $\Delta$-subvariety over $L$.

\bigskip
\section{Applications}
\label{section-applications}

\noindent
We give three applications of Corollary~\ref{ghj-dav-general}.
The first two are obtained simply by replacing, in known arguments, the use of Hrushovski's~\cite[Proposition~2.3]{hrushovski-jouanolou} by our extension to the partial and nonconstant coefficient setting.
The third application, on bounding the height of algebraic solutions to first-order differential equations over $\CC(t)$, seems to not have been noticed before and makes essential use of our relative formulation.

\medskip
\subsection{Lascar and Morley rank agree in dimension two}
We continue to work in a sufficiently saturated model $(K,\Delta)\models\operatorname{DCF}_{0,m}$ with field of total constants $\mathcal C$, and over a small $\Delta$-field of definition $F\subseteq K$.

Let us first explain what ``dimension" we have in mind.
If $X\subseteq K^n$ is an $F$-irreducible $\Delta$-variety with $\Delta$-rational function field $F\langle X\rangle$ of finite transcendence degree over $F$, then we say that $X$ is of {\em finite dimension} and we call $\trdeg_FF\langle X\rangle$ the {\em dimension of $X$}.
Note that $X$ being of finite dimension~$r$ is equivalent to the dimension function of $X$, as defined in $\S$\ref{subsection-c1dav} above, being eventually of constant value $r$.
We extend this terminology to $F$-definable sets $S\subseteq K^n$, by saying that $S$ is of dimension $r$ if all the $F$-irreducible components of the Kolchin closure of $S$ are of finite dimension and the maximum of those dimensions is $r$.

In general Lascar and Morley rank do not agree in differentially closed fields; a counterexample of dimension five was constructed by Hrushovski and Scanlon~\cite{HrushovskiScanlon1999}.
However, it was noted by Marker and Pillay that these ranks do agree for $0$-definable sets of dimension two.
(If the dimension is one, then so are the Lascar and Morley ranks.)
Their argument, which was communicated to us by David Marker, used Hrushovski's theorem on hypersurfaces of differential algebraic varieties over the constants.
Given our extension of this theorem to nonconstant coefficient fields, the Marker-Pillay argument now shows that Lascar and Morley rank agree on arbitrary definable sets of dimension two.
We give the proof here, for the sake of completeness.

\begin{theorem}
\label{lascarmorley}
Suppose $S$ is a definable set of dimension two.
Then the Morley and Lascar ranks of $S$ agree.
\end{theorem}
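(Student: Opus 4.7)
The plan is to reduce to the case where $S$ is an $F$-irreducible $\Delta$-variety $X$ of dimension two and then establish $\operatorname{MR}(X) = \operatorname{RU}(X) = 2$. The reduction is routine: decomposing the Kolchin closure of $S$ into $F$-irreducible components, the components of dimension less than two contribute rank at most one by the dimension-one base case stated in the paper, and so both ranks of $S$ are determined by its dimension-two components. As in the opening reduction of the proof of Theorem~\ref{ghj-dav}, we may also pass to $F^{\operatorname{alg}}$ and assume $X$ absolutely irreducible.

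Since $\operatorname{RU} \leq \operatorname{MR}$ holds in general, it suffices to show $\operatorname{RU}(X) \geq 2$ and $\operatorname{MR}(X) \leq 2$. For the lower bound I would specialise a generic point $a\in X$ over $F$ to a generic point of a codimension-one $\Delta$-subvariety $Y$ of $X$, possibly over an extension $F'\supseteq F$ of parameters; this is a forking extension of $\operatorname{tp}(a/F)$ whose type has $\operatorname{RU}=1$ by the dimension-one base case. Hence $\operatorname{RU}(X)\geq 1+1=2$.

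For the upper bound I would argue by contradiction. Suppose $\operatorname{MR}(X) \geq 3$; then there exist infinitely many pairwise disjoint $F$-definable subsets $Z_i \subseteq X$ with $\operatorname{MR}(Z_i) \geq 2$. The dimension-one base case forces each $Z_i$ to have dimension two, so its Kolchin closure contains a dimension-two $F$-irreducible component, which by irreducibility of $X$ must be $X$ itself, making each $Z_i$ Kolchin-dense in $X$. Being constructible, each $Z_i$ therefore contains a nonempty Kolchin-open subset of $X$; but two nonempty Kolchin-opens in the $F$-irreducible $X$ must intersect, contradicting pairwise disjointness.

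The main obstacle, and the place where Corollary~\ref{ghj-dav-general} is essential, is ensuring the existence of the codimension-one $\Delta$-subvariety $Y$ needed in the $\operatorname{RU}\geq 2$ step, when $X$ may a priori admit no such subvariety over any small parameter set. Applied to $X$ over $F$, Corollary~\ref{ghj-dav-general} yields a dominant $\Delta$-rational map $f\colon X \to V(\mathcal C)$ to the constant points of an algebraic variety over the constants $L_0$ of the field of definition, with the additional information that all but finitely many codimension-one $L$-irreducible $\Delta$-subvarieties of $X$ arise as components of pull-backs $f^{-1}(W(\mathcal C))$ with $W\subseteq V$ algebraic over $L_0$. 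Pulling back algebraic hyperplane sections of $V$ then produces, over suitable parameters, an ample supply of codimension-one $\Delta$-subvarieties certifying the required forking extensions. This is precisely where the relative, nonconstant-coefficient formulation provided by Corollary~\ref{ghj-dav-general} replaces Hrushovski's original theorem in the Marker--Pillay argument and allows the conclusion to be pushed from $0$-definable sets over the constants to arbitrary definable sets over arbitrary finitely generated $\Delta$-fields of definition.
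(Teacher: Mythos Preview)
Your argument has a genuine gap: you are trying to establish $\operatorname{RU}(X)\geq 2$ for \emph{every} irreducible $\Delta$-variety $X$ of dimension two, and that is simply false. The Manin kernel of a simple abelian surface not descending to the constants is a dimension-two irreducible $\Delta$-variety that is strongly minimal, so $\operatorname{RU}=\operatorname{MR}=1$. In exactly such examples your mechanism for producing a codimension-one $Y$ breaks down: the $V$ furnished by Corollary~\ref{ghj-dav-general} is zero-dimensional (there is no nonconstant $\Delta$-rational map to the constants), so there are no hyperplane sections to pull back, and indeed $X$ has no infinite proper $\Delta$-subvariety over any parameters at all.

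The paper avoids this by \emph{not} trying to prove $\operatorname{RU}(X)\geq 2$ unconditionally. Since $\operatorname{RU}\leq\operatorname{MR}\leq\dim X=2$, the only nontrivial case is $\operatorname{MR}(X)=2$, and the paper takes this as a hypothesis. Morley rank two gives infinitely many infinite $\Delta$-subvarieties $Y_i$. Either some $Y_i$ is not defined over $F^{\alg}$, and a generic point of it already witnesses a nonalgebraic forking extension, or all $Y_i$ are over $F^{\alg}$, in which case (after conjugating and taking components) one has infinitely many codimension-one $F$-irreducible $\Delta$-subvarieties, and \emph{now} Corollary~\ref{ghj-dav-general} forces a dominant $f:X\to\mathbb{A}^1(\mathcal C)$ whose generic fibre is the required subvariety not over $F^{\alg}$. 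So the corollary is used not to manufacture codimension-one subvarieties from nothing, but to convert ``infinitely many over $F$'' into ``at least one not over $F^{\alg}$''. Your argument also glosses over the point that the forking step needs $Y$ \emph{not} defined over $F^{\alg}$; if $Y$ is $F$-definable, the generic type of $Y$ over $F'$ does not fork over $F$. Your $\operatorname{MR}\leq 2$ argument is correct but redundant, as $\operatorname{MR}\leq\dim$ is standard.
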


\begin{proof}
Taking irreducible components of Kolchin closures it suffices to prove the theorem for $S=X\subseteq K^n$ an irreducible $\Delta$-variety.
Since Lascar rank is bounded by Morley rank which is bounded by the dimension, the only case we have to consider is when the Morley rank of $X$ is two.

Let $F$ be a finitely generated $\Delta$-field over which $X$ is defined.
It suffices to prove the existence of an infinite $\Delta$-subvariety of $X$ that is not defined over~$F^{\alg}$.
Indeed, let $Y\subseteq X$ be such.
We can further assume that Y is irreducible and defined over some $\Delta$-field extension $F'\supseteq F$.
Let $d\in Y$ be Kolchin generic over $F'$.
If $\tp(d/F')$ were a nonforking extension of $\tp(d/F)$ then $Y$ would be an irreducible component of $\kloc(d/F)$, contradicting the assumption that $Y$ is not defined over $F^{\alg}$.
Hence, $\tp(d/F')$ is a nonalgebraic forking extension of $\tp(d/F)$, proving that the latter is of Lascar rank at least two.
Hence $X$ would be of Lascar rank two.

Since $X$ is of Morley rank two it has infinitely many infinite $\Delta$-subvarieties, say $(Y_i:i<\omega)$.
If any of these are not defined over $F^{\alg}$ then we are done by the previous paragraph, so we may assume they are all defined over $F^{\alg}$.
Replacing $Y_i$ by the union of its $F$-conjugates, we may assume that each $Y_i$ is defined over~$F$.
Moreover, taking irreducible components, we may assume that they are all $F$-irreducible.
Since $X$ is of dimension two, and each $Y_i$ is a proper infinite $\Delta$-variety, each $Y_i$ must be of codimension one in $X$.
By Corollary~\ref{ghj-dav-general}, there is a dominant $\Delta$-rational map $f:X\to\mathbb A^1(\mathcal C)$.
The generic fibre of $f$ will be an infinite $\Delta$-subvariety of $X$ that is not defined over~$F^{\alg}$.
So we are done by the previous paragraph.
\end{proof}

Hrushovski and Scanlon asked in~\cite{HrushovskiScanlon1999} for an explanation of the gap between dimensions two and five, and indeed, as far as we know, it is still not known what happens in dimensions three and four.\footnote{A putative three-dimensional example where Lascar and Morley rank differ was given in~\cite{Nagloo2011algebraic}, but the computations there seem to be incorrect.}

\medskip
\subsection{Dimension one strongly minimal sets}
Hrushovski's motivation in~\cite{hrushovski-jouanolou} for considering the differential-algebraic geometric consequences of Jouanlolou's theorem was to understand the structure of strongly minimal sets of dimension one in $\operatorname{DCF}_0$.
He shows that they are either nonorthogonal to the constants or $\aleph_0$-categorical.
Having extended the differential-algebraic geometric results to the partial case, we follow~\cite[Corollaries 2.5 and 2.6]{hrushovski-jouanolou} to obtain an analogous result for $\operatorname{DCF}_{0,m}$.

\begin{theorem}
\label{aleph0cat}
Suppose $S$ is a strongly minimal dimension one definable set that is orthogonal to $\mathcal C$.
Then $S$ is $\aleph_0$-categorical.
\end{theorem}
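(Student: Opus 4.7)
The plan is to follow Hrushovski's original argument from~\cite[Corollary~2.6]{hrushovski-jouanolou}, replacing the use of Theorem~\ref{udi-ghj} by the partial-differential, nonconstant-coefficient version supplied by Corollary~\ref{ghj-dav-general}. Let $X\subseteq K^n$ be the Kolchin closure of $S$; since $S$ is strongly minimal of dimension one, we may assume $X$ is irreducible of dimension one over a small $\Delta$-field $F$ containing parameters over which $S$ is defined. A standard pregeometry argument shows that $\aleph_0$-categoricity of $S$ is equivalent to local finiteness of algebraic closure on $S$, namely that $\acl(L)\cap S$ is finite for every finitely generated $\Delta$-field extension $L$ of $F$. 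So I would fix such an $L$ and aim to show $\acl(L)\cap X$ is finite.

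The elements of $\acl(L)\cap X$ partition into finite Galois orbits over $L$, and the Kolchin locus over $L$ of each such orbit is an $L$-irreducible zero-dimensional $\Delta$-subvariety of $X$. Since $\dim X=1$, these are precisely the codimension one $L$-irreducible $\Delta$-subvarieties of $X$. Hence it suffices to bound the number of codimension one $L$-irreducible $\Delta$-subvarieties of $X$, and this is exactly the content of the ``in particular'' clause of Corollary~\ref{ghj-dav-general}, provided the $\Delta$-constant field of $L\langle X\rangle$ is contained in $L$. This last condition is where I would invoke orthogonality: a $\Delta$-constant $f\in L\langle X\rangle\setminus L$, evaluated at a generic point $a\in X$ over $L$, produces $f(a)\in\mathcal C\setminus L^{\alg}$, giving a nontrivial $\Delta$-rational map $X\to\mathcal C$ over $L$ and hence witnessing non-orthogonality of the generic type of $X$ to $\mathcal C$.

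The heavy lifting has already been done in establishing Corollary~\ref{ghj-dav-general}; the only genuinely new ingredient is the translation of model-theoretic orthogonality into the algebraic hypothesis on $\Delta$-constants of $L\langle X\rangle$, which is a direct computation in the dimension one case. The main subtlety I anticipate is verifying that local finiteness of $\acl$ on $S$ suffices to conclude $\aleph_0$-categoricity of the induced structure on $S$, rather than merely finiteness of individual algebraic closures; this amounts to the standard counting of $n$-types in a strongly minimal pregeometry with locally finite closure operator, and should cause no real difficulty.
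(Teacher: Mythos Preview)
Your proposal is correct and follows essentially the same route as the paper's proof: reduce $\aleph_0$-categoricity to finiteness of $\acl(L)\cap S$ for finitely generated $L$, observe that algebraic points of the one-dimensional Kolchin closure $X$ give finite (hence codimension one) $L$-irreducible $\Delta$-subvarieties, translate orthogonality to~$\mathcal C$ into the absence of new $\Delta$-constants in $L\langle X\rangle$, and then invoke the ``in particular'' clause of Corollary~\ref{ghj-dav-general}. The only cosmetic difference is that the paper takes $X$ to be the unique \emph{infinite} irreducible component of the Kolchin closure rather than assuming the closure itself is irreducible, but this is immaterial.
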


\begin{proof}
Since $S$ is strongly minimal, to deduce $\aleph_0$-categoricity it suffices to prove that for every finite set $B$ over which $S$ is defined, $\acl(B)\cap S$ is finite.

The Kolchin closure of $S$ has a unique infinite irreducible component, say $X$.
Let $F$ be a finitely generated $\Delta$-field over which $X$ is defined and such that $B\subseteq F$.
All but finitely many points of $S$ are in $X$, so it suffices to show that $\acl(B)\cap X$ is finite.
Let $a\in\acl(B)\cap X$.
Then $a\in X(F^{\alg})$ and so $Y:=\kloc(a/F)$ is a finite $F$-irreducible $\Delta$-subvariety $X$.
Now $\trdeg_FF\langle X\rangle=1$ by the dimension one assumption on $S$.
Since $Y$ is finite it is of codimension one.
So $\acl(B)\cap X$ is contained in the union of all codimension one $F$-irreducible $\Delta$-subvarieties of $X$.
Since $S$ is orthogonal to~$\mathcal C$, $X$ admits no nonconstant $\Delta$-rational maps over $F$ to~$\mathcal C$.
Corollary~\ref{ghj-dav-general} therefore implies that $X$ has only finitely many codimension one $F$-irreducible $\Delta$-subvarieties.
Since all such subvarieties must be finite, their union is a finite subset of $X$.
\end{proof}

It is well known that the theorem fails for strongly minimal sets of higher finite dimension.
Manin kernels appear as strongly minimal groups that are orthogonal to the constants.
For some time it was open whether all strongly minimal sets with {\em trivial} pregeometry in $\operatorname{DCF}_0$ were $\aleph_0$-categorical, but the first author and Thomas Scanlon~\cite{jfunction} have shown recently that the $j$-function gives rise to counterexamples in dimension three.

\medskip
\subsection{Algebraic solutions to first-order differential equations}
In~\cite{Erem}, Eremenko proves that if $P\in\CC(t)[x,y]$ is a nonzero polynomial in two variables over the field of rational functions, then there is a constant $N=N(P)$ such that all solutions in $\big( \CC(t),\frac{d}{dt}\big)$ to the differential equation $P(x,x')=0$ are of degree bounded by $N$.
Here the degree of a rational function is the maximum of the degrees of the numerator and denominator of $g$ when expressed as a ratio of coprime polynomials.
He suggests that ``it is a challenging unsolved question whether [the above result] can be extended to {\em algebraic} solutions," that is to solutions in $\big(\CC(t)^{\alg},\frac{d}{dt}\big)$.
We give here such an extension.

In order to state the extension we need to make sense of the ``degree" of an element of $\CC(t)^{\alg}$.
The natural thing to consider is the function field absolute logarithmic height, which we now recall and details of which can be found in~\cite[Chapters~3 and~4]{lang}.
Given $g\in\CC(t)^{\alg}$ let $k$ be a finite extension of $\CC(t)$ in which $g$ lies.
Writing $k=\CC(E)$ for some smooth projective curve $E$, we view $g$ as a rational function on $E$, and the {\em height} $h(g)$ is defined to be the degree of the polar divisor of~$g$ -- so the number of poles of $g$ counting multiplicity -- divided by $[k:\CC(t)]$.
This quantity does not depend on the choices of $k$ and $E$ made.
Note that $h$ on $\CC(t)^{\alg}$ extends degree on $\CC(t)$.

\begin{theorem}
\label{boundheight}
Suppose $P\in\CC(t)[x,y]$ is nonzero.
There exists $N=N(P)\in \mathbb N$, such that all solutions to $P(x,x')=0$ in $\big(\CC(t)^{\alg},\frac{d}{dt}\big)$ are of height $\leq N$.
\end{theorem}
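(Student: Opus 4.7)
The plan is to view the differential equation $P(x,x')=0$ as defining a one-dimensional $\Delta$-variety $X$ over $L=\CC(t)$, apply Corollary~\ref{ghj-dav-general} to produce a dominant $\Delta$-rational map from $X$ to a variety of constants, and then extract the uniform height bound from the standard function-field height machinery on algebraic curves.

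We may assume $P$ is irreducible and depends nontrivially on $y$, for otherwise the solutions in $\CC(t)^{\alg}$ form a finite set which is trivially of bounded height. Let $X\subseteq K$ be the $\Delta$-variety defined by $P(x,x')=0$ over $L$, and let $C\subseteq\mathbb A^2_L$ be the algebraic curve $\{P(x,y)=0\}$. Then $L\langle X\rangle$ is naturally identified with the function field $L(C)$, so $X$ has dimension one as a $\Delta$-variety. Each algebraic solution $g\in\CC(t)^{\alg}$ gives a finite, hence codimension one, $L$-irreducible $\Delta$-subvariety $\kloc(g/L)\subseteq X$.

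Apply Corollary~\ref{ghj-dav-general} with $F=L_0=\CC$: there exist an algebraic variety $V$ over $\CC$ and a dominant $\Delta$-rational map $f:X\to V(\mathcal C)$ over $L$ such that all but finitely many codimension one $L$-irreducible $\Delta$-subvarieties of $X$ arise as $L$-irreducible components of $f^{-1}(W(\mathcal C))$ for some algebraic subvariety $W\subseteq V$ over $\CC$. Dominance forces $\dim V\leq\dim X=1$. If $\dim V=0$, the corollary directly yields only finitely many codimension one $\Delta$-subvarieties of $X$, hence only finitely many algebraic solutions, and the theorem is trivial; so assume $\dim V=1$. Via the identifications $L\langle X\rangle=L(C)$ and $L\langle V(\mathcal C)\rangle=L(V_L)$, where $V_L:=V\times_{\CC}L$, the $\Delta$-rational map $f$ corresponds to a non-constant rational $L$-morphism $\bar f:C\dashrightarrow V_L$ between one-dimensional $L$-varieties. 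For any algebraic solution $g$, the point $p_g:=(g,g')\in C(\CC(t)^{\alg})$ satisfies $\bar f(p_g)=f(g)\in\CC(t)^{\alg}\cap\mathcal C=\CC$, so $\bar f(p_g)\in V(\CC)\subseteq V_L(L)$.

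Passing to smooth projective models $\tilde C$ of $C$ and $\tilde V$ of $V$, the rational map $\bar f$ extends to a morphism $\tilde f:\tilde C\to\tilde V_L$ by smoothness of $\tilde C$. For ample divisors $D$ on $\tilde C$ and $E$ on $\tilde V_L$, the function-field height machine on curves gives $h_{\tilde f^*E}=h_E\circ\tilde f+O(1)$; since on a curve all heights associated to divisors of positive degree are equivalent up to scaling and $O(1)$, this yields $h_D(p)\leq A\cdot h_E(\tilde f(p))+B$ for constants $A,B$ depending only on $\tilde f$. A $\CC$-point of $\tilde V$ has function-field absolute height zero, so $h_D(p_g)=O(1)$ uniformly over all $g$ for which $p_g$ lies in the regular locus of $\tilde f$ and in the smooth locus of the model. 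Since $h(g)$ and $h_D(p_g)$ are comparable up to constants depending only on $P$, we obtain a uniform bound on $h(g)$ outside a finite exceptional set, consisting of the finite excluded family from Corollary~\ref{ghj-dav-general}, the finite indeterminacy locus of $\bar f$, and the finite singular locus of $C$. Taking the maximum over this finite set and the uniform bound yields $N$. The main technical obstacle is the dictionary in the third paragraph: verifying that the $\Delta$-rational map $f$ really does translate to an honest dominant algebraic morphism between one-dimensional $L$-varieties and that the preimages of $V(\CC)$ account for all but finitely many algebraic solutions; once that is in place, the height inequality is immediate from the vanishing of function-field heights on constants.
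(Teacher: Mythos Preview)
Your proposal is correct and follows essentially the same strategy as the paper: apply Corollary~\ref{ghj-dav-general} to the one-dimensional $\Delta$-variety cut out by $P(x,x')=0$, reinterpret the resulting $\Delta$-rational map $f:X\to V(\mathcal C)$ as a rational map between algebraic curves over $\CC(t)$, observe that algebraic solutions land in $V(\CC)$ where the function-field height vanishes, and then invoke Weil's height machine on curves. The only notable difference is in how the dictionary is established: you use the identification $L\langle X\rangle=L(C)$ at the level of function fields, while the paper writes down the rational map explicitly by solving $\delta(a_2)=-\frac{P_x(a_1,a_2)a_2}{P_y(a_1,a_2)}-P^{\delta}(a_1,a_2)$ on the locus where $P_y\neq 0$; and you pass directly to smooth projective models, whereas the paper resolves the graph of the rational map. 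Both routes are standard and lead to the same conclusion.
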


\begin{proof}
We work in a saturated model $(K,\delta)\models\operatorname{DCF}_0$ extending $\big( \CC(t),\frac{d}{dt}\big)$ and with field of constants $\mathcal C$.
We may assume that $P$ is irreducible and of positive degree in both $x$ and $y$.
Let $E\subseteq\mathbb A^2$ be the algebraic curve over $\CC(t)$ defined by $P(x,y)=0$, and let $X:=\{(a_1,a_2)\in E(K):\delta(a_1)=a_2\}$.

By Corollary~\ref{ghj-dav-general}, there is an algebraic variety $V\subseteq\mathbb A^n$ defined over $\CC$ and a dominant $\delta$-rational map $f:X\to V(\mathcal C)$ over $\CC(t)$ such that all but finitely many $\CC(t)$-irreducible $\delta$-subvarieties of $X$ of codimension one are irreducible components of sets of the form $f^{-1}\big(W(\CC)\big)$ where $W$ is a $\CC$-definable algebraic subvariety of~$V$.
Since $X$ is a one-dimensional $\delta$-variety, $V(\mathcal C)$ is of dimension $\leq 1$ as a $\delta$-variety, and hence as an algebraic variety we have $\dim V\leq 1$.
If $a\in X\big(\CC(t)^{\alg}\big)$ then $\kloc\big(a/\CC(t)\big)$ is a finite $\CC(t)$-irreducible $\delta$-subvariety of~$X$, and hence of codimension one.
So if $\dim V=0$ then $X(\CC(t)^{\alg})$ is finite, and the theorem follows vacuously.
We may therefore assume that $\dim V =1$.
As the only proper $\CC$-definable algebraic subvarieties of $V$ are its $\CC$-points, we conclude that all but finitely many $\CC(t)^{\alg}$-points of $X$ get mapped by $f$ to $V(\CC)$.
Since the height function is zero on $V(\CC)$, our strategy now is to use $f$ to bound the height function on $X\big(\CC(t)^{\alg}\big)$.

First, we claim that the $\delta$-rational map $f$ extends to a rational map on $E$.
Let $X_0$ be obtained from $X$ by removing the (finite) set of points where the partial derivative $P_y:=\frac{\partial}{\partial y}P$ vanishes.
If $a=(a_1,a_2)\in X_0$ then
\begin{eqnarray*}
\delta(a_1)&=&a_2\\
\delta(a_2)&=&-\frac{P_x(a_1,a_2)a_2}{P_y(a_1,a_2)}-P^{\delta}(a_1,a_2)
\end{eqnarray*}
That is, $\delta$ agrees with the rational map $(y, -\frac{P_xy}{P_y}-P^{\delta})$ on $X_0$.
Replacing occurrences of $\delta$ in $f$ by this rational map, we obtain a $\CC(t)$-definable rational map $\alpha$ that agrees with $f$ on $X_0$.
As $X_0$ is Zariski dense in $E$, we have that $\alpha:E\to V$.

The height function defined above extends to $\CC(t)^{\alg}$-points of $E$ and $V$.
First, on any projective space the function field absolute logarithmic height for $\CC(t)^{\alg}$-points is defined as follows:
If $g=(g_0:\dots:g_\ell)\in\mathbb P^\ell(k)$ where $k$ is a finite extension of $\CC(t)$, and writing $k=\CC(E)$ for some smooth projective curve $E$, then $h(g)$ is the degree of the supremum of the polar divisors of $g_0,\dots,g_\ell$ on $E$, divided by $[k:\CC(t)]$.
This height agrees with the height defined earlier on $\CC(t)^{\alg}$ under the identification of $g\in \CC(t)^{\alg}$ with $(1:g)\in \mathbb P^1\big(\mathbb C(t)^{\alg}\big)$.
See~\cite[$\S$3.3]{lang} for more details.
Now, embed $E$ in $\mathbb P^2$ by identifying $(a_1,a_2)$ with $(1:a_1:a_2)$, and denote by $\overline E$ the Zariski closure of $E$ in $\PP^2$.
We thus have a height function on $\overline E\big(\CC(t)^{\alg}\big)$ coming from $\PP^2$.
Similarly, let $\overline V$ be the projective closure of $V$ in $\mathbb P^n$, and denote again by $h$ the corresponding height function on $\overline V\big(\CC(t)^{\alg}\big)$.
Note that the height of a $\CC$-point is zero.

Consider the rational map $\alpha:\overline E\to \overline V$.
Resolving the singularities of the graph of~$\alpha$, we have a smooth projective $\CC(t)$-definable curve $\Gamma$ with surjective morphisms $\pi_E:\Gamma\to\overline E$ and $\pi_V:\Gamma\to\overline V$ such that $\alpha\circ\pi_E =\pi_V$ on a cofinite subset of $\Gamma$.
Let $h_E:=h\circ\pi_E$ and $h_V:=h\circ\pi_V$ be the height functions on $\Gamma\big(\mathbb C(t)^{\alg}\big)$ induced by these maps.
By the functoriality of Weil's height machine, see~\cite[$\S$4.1 and~$\S$4.2]{lang}, up to equivalence, these heights depend only on the divisors of the linear systems on $\Gamma$ associated to $\pi_E:\Gamma\to \mathbb P^2$ and $\pi_V:\Gamma\to \mathbb P^n$ respectively, and not on the morphisms themselves.
Here two positive real-valued functions are said to be {\em equivalent} if their difference is a bounded function.
Moreover, by~\cite[Corollary~4.3.5]{lang}, which is  the algebraic equivalence property of Weil's height machine in the case of curves, $h_E$ is ``quasi-equivalent" to $rh_V$, where $r$ is the ratio of the degrees of the corresponding divisors.
{\em Quasi-equivalence} means that for every $\epsilon>0$ there are positive constants $c_2,c_2$ such that  $(1-\epsilon)rh_V-c_1\leq h_E\leq (1+\epsilon)rh_V+ c_2$.
Now, for all but finitely many $a\in X\big(\mathbb C(t)^{\alg}\big)$, we know that $\alpha(a)=f(a)\in V(\CC)$, and hence $h(\alpha(a))=0$.
With possibly finitely many more exceptions, we also have $b\in \Gamma\big(\mathbb C(t)^{\alg}\big)$ such that $\pi_E(b)=a$ and $\pi_V(b)=\alpha(a)$.
Hence, for such $a$ we get
$$h(a)=h_E(b)\leq (1+\epsilon)rh_V(b)+c_2=(1+\epsilon)rh(\alpha(a))+c_2=c_2$$
It follows that there is a uniform bound on the height of all points in $X\big(\mathbb C(t)^{\alg}\big)$.

If $g\in\CC(t)^{\alg}$ is a solution to $P(x,x')=0$, then $(g,\delta g)\in X\big(\mathbb C(t)^{\alg}\big)$, and from the way the heights were defined, $h(g)\leq h(g,\delta g)$.
So we have shown that a uniform bound exists on the height of all algebraic solutions to $P(x,x')=0$.
\end{proof}

We have restricted our attention above to the ordinary case for the sake of concreteness, and because it was in this form that the problem is mentioned in~\cite{Erem}.
However, since the setting of Corollary~\ref{ghj-dav-general} is after all that of partial differentiation, the above arguments extend to the partial case.
One obtains the following statement, which we leave to the reader to verify:
{\em Suppose $L=\CC(t_1,\dots, t_m)$ is the field of rational functions in $m$ variables, and $E\subseteq\mathbb A^{m+1}$ is an algebraic curve over~$L$.
Then
$$\left\{g\in L^{\alg}:(g,\frac{\partial g}{\partial t_1},\frac{\partial g}{\partial t_2},\dots,\frac{\partial g}{\partial t_m})\in E\right\}$$
is of bounded height.}
Here we take the absolute logarithmic height corresponding to the function field $L/\mathbb C$.
Note also that the complex numbers play no special role, the result remains true over any field of characteristic zero.

\appendix

\bigskip
\section{Two lemmas in exterior algebra}

\noindent
The following two straightforward linear algebra lemmas that are used in the proof of the Jouanolou-Hrushovski-Ghys theorem appear in Hrushovski's unpublished manuscript~\cite{hrushovski-jouanolou}.
As we could not find a good published reference we reproduce them here almost verbatum.

\begin{lemma}[Hrushovski~\cite{hrushovski-jouanolou}]
\label{udi-lemma1.5}
Suppose $k\subseteq K$ are fields, $V$ is a $K$-vector space, and $U\subseteq V$ is a $k$-subspace of $V$.
Working in the exterior powers of $V$ over $K$, suppose there exists $\ell\geq 1$ such that $\dim_k\span_k\{u_1\wedge\dots\wedge u_\ell:u_1,\dots,u_\ell\in U\}$ is finite and greater than zero.
Then $\dim_kU$ is finite.
\end{lemma}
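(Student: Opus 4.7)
I would prove this by induction on $\ell$, reducing the problem at each step by factoring out a fixed $(\ell-1)$-form.

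The base case $\ell = 1$ is immediate: the span in question is simply $U$ itself, so the hypothesis directly gives $\dim_k U < \infty$.

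For the inductive step, suppose the lemma holds for $\ell-1$, and let $W_\ell := \span_k\{u_1 \wedge \dots \wedge u_\ell : u_i \in U\}$ be finite-dimensional and nonzero. Pick $u_1,\dots,u_\ell \in U$ with $u_1 \wedge \dots \wedge u_\ell \neq 0$; in particular these are $K$-linearly independent. Consider the $k$-linear map $\phi : U \to W_\ell$ defined by $\phi(v) = v \wedge u_2 \wedge \dots \wedge u_\ell$. Since $W_\ell$ is $k$-finite-dimensional, so is $U/\ker\phi$, and I observe that
$$\ker\phi = U \cap \span_K\{u_2,\dots,u_\ell\} =: U_2.$$
Thus it suffices to show that $\dim_k U_2 < \infty$, and for this I will apply the inductive hypothesis to $U_2$ with $\ell - 1$ in place of $\ell$.

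Let $W'_{\ell-1} := \span_k\{v_1 \wedge \dots \wedge v_{\ell-1} : v_i \in U_2\}$. This is nonzero because $u_2,\dots,u_\ell \in U_2$ and their wedge is nonzero. I now claim that the $k$-linear map $\rho : W'_{\ell-1} \to W_\ell$ given by $\rho(x) = u_1 \wedge x$ is injective, which will force $\dim_k W'_{\ell-1} \leq \dim_k W_\ell < \infty$. Indeed, $W'_{\ell-1}$ sits inside the exterior power $\bigwedge^{\ell-1}\span_K\{u_2,\dots,u_\ell\}$, which is a $1$-dimensional $K$-vector space spanned by $u_2 \wedge \dots \wedge u_\ell$; writing $x = a\,(u_2 \wedge \dots \wedge u_\ell)$ for some $a \in K$, one has $\rho(x) = a\,(u_1 \wedge u_2 \wedge \dots \wedge u_\ell)$, which is zero iff $a = 0$ iff $x = 0$. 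Note also that $\rho$ takes values in $W_\ell$, since for $v_i \in U_2 \subseteq U$ one has $u_1 \wedge v_1 \wedge \dots \wedge v_{\ell-1} \in W_\ell$.

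Therefore $W'_{\ell-1}$ is finite-dimensional and nonzero over $k$, so by the inductive hypothesis $\dim_k U_2 < \infty$, and combined with $\dim_k(U/U_2) < \infty$ this gives $\dim_k U < \infty$, completing the induction. The only nontrivial point is the injectivity of $\rho$, which is precisely where the nonvanishing of the original $\ell$-form is used.
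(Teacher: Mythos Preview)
Your proof is correct. The paper takes a slightly different, non-inductive route: it too fixes a nonzero $\beta = u_1 \wedge \dots \wedge u_\ell$ and considers the map $v \mapsto u_1 \wedge \dots \wedge u_{\ell-1} \wedge v$ from $U$ into the wedge space $B$ (your $W_\ell$), identifying the kernel as $U \cap \span_K\{u_1,\dots,u_{\ell-1}\}$. But rather than handle this kernel by induction, the paper introduces the finite-dimensional $k$-subspace $A := \{a \in K : a\beta \in B\}$ of $K$ and shows directly that any $v = \sum_{i<\ell} a_i u_i$ in $U$ must have all $a_i \in A$ (by wedging $v$ into the $i$th slot), so the kernel sits inside $\sum_{i<\ell} A u_i$. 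Your inductive step trades this coefficient-tracking for the observation that the kernel $U_2$ lives in an $(\ell-1)$-dimensional $K$-space, so its $(\ell-1)$-fold wedges span at most a $K$-line and hence inject into $W_\ell$ via $u_1 \wedge (-)$. Both arguments pivot on the same map out of $U$; yours is arguably cleaner in avoiding the auxiliary space $A$, while the paper's is self-contained in a single step without induction.
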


\begin{proof}
Let $B=\span_k\{u_1\wedge\dots\wedge u_\ell:u_1,\dots,u_\ell\in U\}$ and choose some nonzero $\beta:=u_1\wedge\dots\wedge u_\ell$ with $u_1,\dots,u_\ell\in U$.
Consider the $k$-linear map $K\to \bigwedge^\ell V$ given by $a\mapsto a\beta$ and let $A$ be the preimage of $B$.
So $A$ is a finite dimensional $k$-vector subspace of $K$.

We claim that $\dim_k\big(U\cap\span_K\{u_1,\dots,u_{\ell-1}\}\big)$ is finite.
Indeed, if $v=\displaystyle\sum_{i=1}^{\ell-1}a_iu_i$ is in $U$ then for all $i\leq\ell-1$ we have
\begin{eqnarray*}
B\ \ni\ u_1\wedge\dots\wedge u_{i-1}\wedge v\wedge u_{i+1}\wedge\dots\wedge u_\ell
&=&
a_i\beta
\end{eqnarray*}
so that $a_i\in A$.
It follows that $\displaystyle U\cap\span_K\{u_1,\dots,u_{\ell-1}\}\subseteq\sum_{i=1}^{\ell-1}Au_i$, and hence is finite dimensional over $k$ as $A$ is.

Now consider the $k$-linear map $U\to B$ given by $v\mapsto u_1\wedge\dots\wedge u_{\ell-1}\wedge v$.
Since $u_1\wedge\dots\wedge u_{\ell-1}\neq 0$ the kernel of this map is $U\cap\span_K\{u_1,\dots,u_{\ell-1}\}$.
As both the kernel and the image are finite dimensional $k$-vector spaces, so is $U$.
\end{proof}

\begin{lemma}[Hrushovski~\cite{hrushovski-jouanolou}]
\label{udi-lemma1.4}
Let $K$ be a field, $V$ a $K$-vector space, and $V^*$ its dual.
Suppose $\alpha_1,\dots,\alpha_\ell\in V^*$ are such that $\gamma:=\alpha_1\wedge\dots\wedge\alpha_\ell\neq 0$, and $\omega$ is another wedge product of elements of $V^*$ such that $\omega\wedge\alpha_i=0$ for all $i=1,\dots,\ell$.
Then for any $\beta\in V^*$, if $\beta\wedge \gamma=0$ then $\beta\wedge\omega=0$.
\end{lemma}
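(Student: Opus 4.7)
The plan is to reduce the claim to the standard exterior-algebra fact that if $v_1\wedge\cdots\wedge v_p\neq 0$ is a decomposable $p$-vector and $u\wedge v_1\wedge\cdots\wedge v_p=0$, then $u\in\span\{v_1,\dots,v_p\}$. Since only finitely many vectors are ever manipulated, I would first pass to a finite-dimensional subspace of $V^*$ containing the $\alpha_i$, the factors of $\omega$, and $\beta$, so that the usual finite-dimensional machinery of exterior algebras applies without qualification.

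Write $\omega=\eta_1\wedge\cdots\wedge\eta_p$, and dispose of the trivial case $\omega=0$. The first substantive step is to apply the standard fact above to the hypothesis $\omega\wedge\alpha_i=0$ to conclude that each $\alpha_i$ lies in the $p$-dimensional subspace $W:=\span\{\eta_1,\dots,\eta_p\}$. Since $\gamma\neq 0$, the vectors $\alpha_1,\dots,\alpha_\ell$ are linearly independent in $W$, so they extend to a basis $\alpha_1,\dots,\alpha_p$ of $W$. Two nonzero decomposable $p$-vectors whose factors span the same $p$-dimensional subspace are proportional, hence $\omega=c\,\alpha_1\wedge\cdots\wedge\alpha_p$ for some nonzero $c\in K$.

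With this setup the conclusion is immediate: $\beta\wedge\gamma=0$ together with the linear independence of $\alpha_1,\dots,\alpha_\ell$ yields, again by the same standard fact, $\beta\in\span\{\alpha_1,\dots,\alpha_\ell\}\subseteq W$; hence $\beta\wedge\omega=c\,\beta\wedge\alpha_1\wedge\cdots\wedge\alpha_p=0$, because $\beta$ is a linear combination of the $\alpha_i$ appearing among the factors. I do not anticipate a genuine obstacle: the only conceptual content is the remark that a nonzero decomposable $p$-vector determines its ``direction'' (the $p$-dimensional subspace spanned by its factors) up to scalar, after which the three hypotheses become the clean incidences $\alpha_i\in W$ and $\beta\in\span\{\alpha_1,\dots,\alpha_\ell\}\subseteq W$.
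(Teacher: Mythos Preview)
Your proposal is correct and follows essentially the same approach as the paper: both arguments use the fact that a nonzero decomposable $p$-vector determines the span of its factors, place the $\alpha_i$ inside the span $W$ of the factors of $\omega$, extend them to a basis of $W$, and rewrite $\omega$ as a scalar multiple of $\alpha_1\wedge\cdots\wedge\alpha_p$. The only cosmetic difference is the last line: the paper observes $\omega=a\gamma\wedge\alpha_{\ell+1}\wedge\cdots\wedge\alpha_p$ and concludes directly from $\beta\wedge\gamma=0$, whereas you take the (equally valid) extra step of deducing $\beta\in\span\{\alpha_1,\dots,\alpha_\ell\}$.
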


\begin{proof}
We may assume $\omega\neq 0$.
Consider the $K$-subspace
$$W:=\{\alpha\in V^*:\alpha\wedge\omega=0\}.$$
If we write $\omega=\beta_1\wedge\dots\wedge\beta_p$, then certainly each $\beta_i\in W$.
As $\omega$ is nonzero the $\beta_i$ are linearly independent.
On the other hand, if $\alpha\wedge\omega=0$ then $\alpha\in\span_K\{\beta_1,\dots,\beta_p\}$, so that $\{\beta_1,\dots,\beta_p\}$ is a basis for $W$.

On the other hand each $\alpha_i\in W$ by assumption, and as $\gamma$ is nonzero these too are linearly independent.
Extend to another basis $\{\alpha_1,\dots,\alpha_\ell,\alpha_{\ell+1},\dots\alpha_p\}$.
Then $\beta_1\wedge\dots\wedge\beta_p=a\alpha_1\wedge\dots\wedge\alpha_p$ for some $a\in K$, and so
$\omega=a\gamma\wedge\alpha_{\ell+1}\wedge\dots\wedge\alpha_p$.
From this it is clear that if $\beta\wedge \gamma=0$ then $\beta\wedge\omega=0$.
\end{proof}


\end{document}